\numberwithin{equation}{section}
\numberwithin{figure}{section}
\theoremstyle{plain}
\newtheorem{thm}{\protect\theoremname}[section]
  \theoremstyle{plain}
\newtheorem{lem}[thm]{\protect\lemmaname}
\newtheorem{prop}{Proposition}[section]
\theoremstyle{plain}
\newtheorem{rmk}{Remark}[section]
\theoremstyle{plain}
\newcommand{\omgrt}{\widetilde{\Omega}_T}
\newcommand{\abs}[1]{\left\vert#1\right\vert}
\newcommand{\Rmnum}[1]{\expandafter\@slowromancap\romannumeral#1@}
\theoremstyle{remark}
\numberwithin{equation}{section}
\newcommand{\defs}{:=}
\DeclareMathOperator{\tr}{Tr}
\newcommand{\md}{\mathrm{D}}
\DeclareSymbolFont{lettersA}{U}{pxmia}{m}{it}
\DeclareMathSymbol{\piup}{\mathord}{lettersA}{"19}
\newcommand{\mcb}{\mathcal{B}}
\newcommand{\mcd}{\mathcal{D}}
\newcommand{\mch}{\mathcal{H}}
\newcommand{\mcn}{\mathcal{N}}
\newcommand{\mcp}{\mathcal{P}}
\newcommand{\mcq}{\mathcal{Q}}
\newcommand{\mcw}{\mathcal{W}}
\newcommand{\mfp}{\mathscr{P}}
\newcommand{\mbx}{\mathbf x}
\newcommand{\mby}{\mathbf y}
  \providecommand{\lemmaname}{Lemma}
\providecommand{\theoremname}{Theorem}
\title[Persistence of the steady planar normal shock]{Persistence of the steady planar normal shock structure in 3-D unsteady potential flows}
\begin{document}

\author{Beixiang Fang}
\author{Feimin Huang}
\author{Wei Xiang}
\author{Feng Xiao}

\address{Beixiang Fang: School of Mathematical Science, MOE-LSC, and SHL-MAC, Shanghai Jiao
Tong University, Shanghai 200240, China.}
\email{\textbf{bxfang@sjtu.edu.cn }}
\address{Feimin Huang: Academy of Mathematics and Systems Science, Chinese Academy of Science, Beijing 100190, China; School of Mathematical Sciences, University of Chinese Academy of Sciences, Beijing 100049, China.}
\email{\textbf{fhuang@amt.ac.cn}}
\address{Wei Xiang: Department of Mathematics, City University of Hong Kong, Hong Kong, China.}
\email{\texttt{weixiang@cityu.edu.hk}}
\address{Feng Xiao: NCMIS, Academy of Mathematics and Systems Science, Chinese Academy of Sciences, Beijing 100190, China; Institute of Applied Mathematics, Academy of Mathematics and Systems Science, Chinese Academy of Science, Beijing 100190, China.}
\email{\textbf{xiaofeng@amss.ac.cn}}
\keywords{Dynamic stability; Unsteady Perturbation; Planar Normal Shocks; Artificial Perturbation; Potential Flow Equations; Dihedral Singularity; Hyperbolic Equations}
\subjclass[2000]{35L65, 35L67, 35M10, 35B35, 76H05, 76N10}
\date{\today}
\maketitle
\begin{abstract}
This paper concerns the dynamic stability of the steady 3-D wave structure of a planar normal shock front intersecting perpendicularly to a planar solid wall for unsteady potential flows. The stability problem can be formulated as a free boundary problem of a quasi-linear hyperbolic equation of second order in a dihedral-space domain between the shock front and the solid wall. The key difficulty is brought by the edge singularity of the space domain, the intersection curve between the shock front and the solid wall. Different from the 2-D case, for which the singular part of the boundary is only a point, it is a curve for the 3-D case in this paper. This difference brings new difficulties to the mathematical analysis of the stability problem. A modified partial hodograph transformation is introduced such that the extension technique developed for the 2-D case can be employed to establish the well-posed theory for the initial-boundary value problem of the linearized hyperbolic equation of second order in  a dihedral-space domain. Moreover, the extension technique is improved in this paper such that loss of regularity in the a priori estimates on the shock front does not occur. Thus the classical nonlinear iteration scheme can be constructed to prove the existence of the solution to the stability problem, which shows the dynamic stability of the steady planar normal shock without applying the Nash-Moser iteration method. 
\end{abstract}
\section{Introduction}
\subsection{Description of the problem}
This paper concerns the dynamic stability of the steady 3-D wave structure of a planar normal shock front intersecting perpendicularly to a planar solid wall (see Figure \ref{fig1}) for unsteady potential flows.  
As stated by Courant-Friedrichs in \cite[page 375]{CoF}, ``\textit{Whether or not a flow compatible with the boundary condition occurs depends moreover on its stability}'', it is important and necessary to study the stability of the normal shock structure, namely, whether or not the shock structure will basically maintain as the parameters of the flow fields are slightly perturbed. For steady flows, for which the parameters (density, velocity, pressure, etc.) do not depend on the time variable, there have been plenty of works on the existence and stability of transonic shocks, for instance, see  \cite{BaeXiang,CCXiang2020,CFang,CFeldman2003,CFeldman2004,ChenFang,ChenYuan,Fang,FangLiuYuan2013,FangX,FangXin,LiXuYin,LiXinYin,LiXinYin1,XinYin,YinZhou,Yuan} and the references cited therein. 
 As pointed out by von Karman in the discussion chaired by von Neumann and recorded in \cite{Neumann}, a steady motion ``\textit{can occur only as a limiting case}'' of a physical process. Therefore, it is necessary to investigate the unsteady motions associated with the steady planar normal shocks and study their dynamic stability under unsteady perturbations. It has been established the stability of normal shocks, which are far away from physical boundaries, in \cite{Majda1983E,Majda1983S} by Majda for Euler flows, and in \cite{MT1987} by Majda and Thomann for potential flows. See also, for instance, \cite{BenzoniSerre2007,Metivier} and references therein for further studies. However, in practice, shocks often appear together with physical boundaries such as solid walls, wedges, wings, etc.. Therefore, it is important and necessary to further study the stability of shocks involving physical boundaries. In this paper, we are going to study the dynamic stability of the steady 3-D wave structure of a planar normal shock front intersecting perpendicularly to a planar solid wall (see Figure \ref{fig1}), namely, whether the structure will maintain, at least in a short time, under unsteady perturbations of the flow parameters.
In this paper the flows are governed by the unsteady potential flow equations, which read
\begin{equation}\label{potential system}
    \left\{\!\!
    \begin{array}{l}
    \partial_t\rho+\nabla\cdot(\rho \nabla\Phi)=0,\\
    \partial_t\Phi+\frac{1}{2}|\nabla\Phi|^2+\imath(\rho)=B_0,
    \end{array}
    \right.
\end{equation}
where $\nabla\defs (\partial_{x_1},\partial_{x_2},\partial_{x_3})^\top$ is the gradient operator with respect to the space variables $\mathbf{x}\defs(x_1,x_2,x_3)\in \mathbb{R}^3$ and $t>0$ is the time variable. $\imath(\rho)\defs\frac{\rho^{\gamma-1}-1}{\gamma-1} $ is the specific enthalpy, $\Phi$ the velocity potential, $\rho$ the density, $B_0$ the Bernoulli constant, and $\gamma >1$ the adiabatic exponent. The importance of the potential flow equations is first observed by Jacques Hadamard in \cite{Hadamard} for the unsteady Euler equations with weak shocks. Since then, the potential flow equations have been studied by mathematicians steadily, for instance, see Bers \cite{Bers}, Courant-Friedrichs \cite{CoF}, Majda-Thomann \cite{MT1987} and Morawetz \cite{Morawetz}.

By the second equation of $\eqref{potential system}$, one can express the density $\rho$ as a function with respect to $\md\Phi\defs(\partial_t\Phi,\nabla\Phi)$, $B_0$ and $\gamma$, i.e.,
\begin{align}
\rho=\mathfrak{h} (\mathrm{D} \Phi;B_0,\gamma)\defs\left((\gamma-1)(B_0-\partial_t\Phi-\frac12|\nabla\Phi|^2)+1\right)^{\frac{1}{\gamma-1}}.\label{1.2}
\end{align}

Replacing $\rho$ in the first equation of $\eqref{potential system}$ by $\mathfrak{h}(\mathrm{D} \Phi;B_0,\gamma)$, one deduces that $\Phi$ satisfies a hyperbolic equation of second order:
\begin{equation}\label{eq:potential-equations}
\partial_{tt}\Phi+2\sum_{i=1}^3\partial_{x_i}\Phi\partial_{tx_i}\Phi-\sum_{i,j=1}^3(\delta_{ij}c^2-\partial_{x_i}\Phi\partial_{x_j}\Phi)\partial_{x_ix_j}\Phi=0,
\end{equation}
where $c=\sqrt{\rho^{\gamma-1}}$ is the sonic speed and
\begin{equation*}
    \delta_{ij}=\left\{\!\!
    \begin{array}{l}
    1 \quad\mbox{if\ } i=j,\\
    0 \quad\mbox{if\ }i\neq j.\\
    \end{array}
    \right.
\end{equation*}
Let $\Gamma_{\scriptstyle{\textrm{shock}}}:=\{(t,\mathbf{x}):\, x_1=\mathcal{X}(t,x_2,x_3)\}$ be a smooth shock front in the flow field. Then on $\Gamma_{\scriptstyle{\textrm{shock}}}$, the velocity potential $\Phi$ has to satisfy the following Rankine-Hugoniot conditions:
\begin{equation}\label{R-H conditions}
[\Phi]=0\qquad\mbox{and}\qquad \partial_{t}\mathcal{X}[\rho]-[\rho\partial_{x_1}\Phi]+\partial_{x_2}\mathcal{X}[\rho\partial_{x_2}\Phi]+\partial_{x_3}\mathcal{X}[\rho\partial_{x_3}\Phi]=0,
\end{equation}
where the square bracket $[m]$ stands for the jump of the quantity $m$ across the shock front $\Gamma_{\scriptstyle{\textrm{shock}}}$; that is, assuming
\begin{equation}\label{defn R_pm}
\mathcal{R}_{\pm}\defs\{(t,\mathbf{x})\in\mathbb{R}^+\times\mathbb{R}^3 :x_1\gtrless\mathcal{X}(t,x_2,x_3)\}
\end{equation}
and
\[
\mathbf{n}_s\defs-\frac{(\partial_t\mathcal{X},-1,\partial_{x_2}\mathcal{X},\partial_{x_3}\mathcal{X})}{\sqrt{1+|\partial_t\mathcal{X}|^2+|\partial_{x_2}\mathcal{X}|^2+|\partial_{x_3}\mathcal{X}|^2}},
\]
for every $(t,\mathbf{x})\in \Gamma_{\scriptstyle{\textrm{shock}}}$, there exists $\tilde{\alpha}>0$ such that $(t,\mathbf{x})\pm\tau\mathbf{n}_s\in \mathcal{R}_{\pm}$ for any $\tau\in (0,\tilde{\alpha})$, define
\begin{align}
[m](t,\mathbf{x})\defs
\lim_{\substack{\\(\tilde{t},\tilde{\mathbf{x}})\rightarrow (t,\mathbf{x})\\ (\tilde{t},\tilde{\mathbf{x}})\in\mathcal{R}_{+}}}m(\tilde{t},\tilde{\mathbf{x}})-\lim_{\substack{\\(\tilde{t},\tilde{\mathbf{x}})\rightarrow (t,\mathbf{x})\\ (\tilde{t},\tilde{\mathbf{x}})\in\mathcal{R}_{-}}}m(\tilde{t},\tilde{\mathbf{x}}).
\end{align}
It is easy to verify that the Ranking-Hugoniot conditions are equivalent to the following free boundary conditions for $\Phi$:
\begin{equation}
[\Phi]=0 \quad\mbox{and}\quad
[\rho][\partial_t\Phi]+[\partial_{x_1}\Phi][\rho\partial_{x_1}\Phi]+[\partial_{x_2}\Phi][\rho\partial_{x_2}\Phi]+[\partial_{x_3}\Phi][\rho\partial_{x_3}\Phi]=0. \label{eq:potential_bdry_RH-2}
\end{equation}
\vskip 0.2cm
\textbf{The Steady Planar Normal Shock Structure.}
\vskip 0.2cm
A steady planar normal shock solution(see Figure \ref{fig1}) to the potential flow equations \eqref{potential system}, satisfying the Rankine-Hugoniot conditions \eqref{R-H conditions} on the planar shock front, can be easily constructed, which is the reference state in this paper.
\begin{figure}[ht]
	\begin{center}		{\includegraphics[scale=0.6]{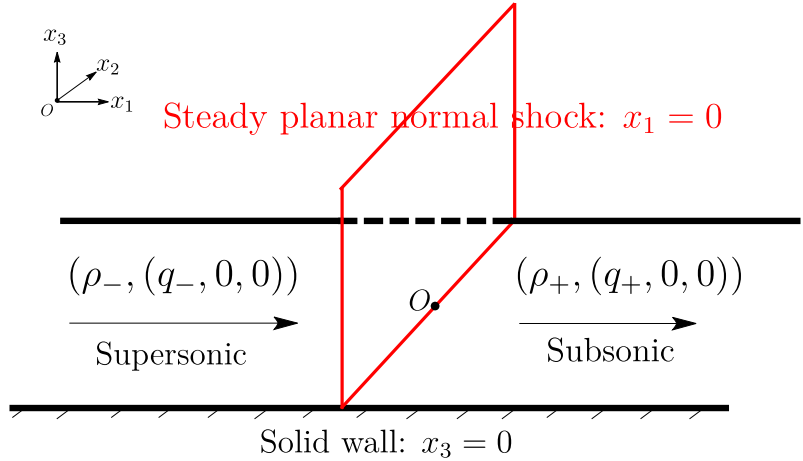}}\caption{The steady planar normal shock structure.}\label{fig1}
	\end{center}
\end{figure}

In Figure \ref{fig1}, the red rectangle stands for a steady planar normal shock front $\{x_1=0\}$ intersecting the solid wall $\{x_3=0\}$ at the edge $\{x_1=0\}\cap\{x_3=0\}$. Constants $\rho_{\pm}$ represent the density of the fluid behind and ahead of the steady planar normal shock, respectively, and $(q_{\pm},0,0)$ are the constant velocities of the flow fields behind and ahead of the steady planar normal shock, respectively.

Now we give a mathematical definition to this steady planar normal shock structure. Denote by $\overline{\Gamma}_0\defs\{\mathbf{x}\in\mathbb{R}^3:\ x_3= 0\} $ the flat solid wall and let $\overline{\Gamma}_{\scriptstyle{\textrm{shock}}}\defs\{\mathbf{x}\in \mathbb{R}^3:\ x_1=\overline{\mathcal{X}}(t,x_2,x_3)\equiv 0\}$ be the position of the steady planar normal shock. The flow field is divided by the normal shock front $\overline{\Gamma}_{\scriptstyle{\textrm{shock}}}$ into two parts $\overline{\mcd}_{-}$ and $\overline{\mcd}_{+}$, which are the regions ahead of and behind the steady shock front $\overline{\Gamma}_{\scriptstyle{\textrm{shock}}}$, respectively, i.e.,
 $$\overline{\mcd}_{\pm}\defs\{\mbx\in\mathbb{R}^3: x_1\gtrless \overline{\mathcal{X}}(t,x_2,x_3),\ x_2\in \mathbb{R},\ x_3>0\}.$$
The constant densities and velocities of the fluid in $\overline{\mcd}_{\pm}$ are given by $(\rho_{\pm},(q_{\pm},0,0))$, respectively. Then $\rho_{\pm}$ are determined by $q_{\pm}$ via $\eqref{1.2}$, \emph{i.e.},
\begin{align}
	\rho_{\pm}=\mathfrak{h}((0,q_{\pm},0,0);B_0,\gamma)=\left((\gamma-1)\left(B_0-\frac{1}{2}q_{\pm}^2\right)+1\right)^{\frac{1}{\gamma -1}}.
\end{align}
 Let $\overline{\Phi}(t,\mathbf{x})$ be defined as
\begin{equation}\label{2.7}
\overline{\Phi}(t,\mathbf{x})=
\begin{cases}
\overline{\Phi}_-(t,\mbx)\defs q_-\cdot x_1\quad\mbox{for }(t,\mbx)\in \mathbb{R}_+\times\overline{\mcd}_-,\\
\overline{\Phi}_+(t,\mbx)\defs q_+\cdot x_1\quad\mbox{for }(t,\mbx)\in \mathbb{R}_+\times\overline{\mcd}_+.
\end{cases}
\end{equation}

Then it is easy to see that $\overline{\Phi}(t,\mathbf{x})$ satisfies \eqref{eq:potential-equations} in the two regions $\overline{\mcd}_-$ and $\overline{\mcd}_+$. Moreover, it satisfies
\vspace{-0.1cm}
\begin{equation}
\nabla\overline{\Phi}(t,\mathbf{x})=
\begin{cases}
(q_-,0,0)\quad\mbox{for }(t,\mbx)\in \mathbb{R}_+\times\overline{\mcd}_-,\\
(q_+,0,0)\quad\mbox{for }(t,\mbx)\in \mathbb{R}_+\times\overline{\mcd}_+.
\end{cases}
\end{equation}
Thus $\overline{\Phi}(t,\mathbf{x})$ is a velocity potential of the flow field above the solid wall $\overline{\Gamma}_0$. Due to the Rankine-Hugoniot conditions \eqref{R-H conditions} (or equivalently \eqref{eq:potential_bdry_RH-2}) and the entropy condition, constants $(\rho_-,\rho_+,q_-,q_+)$ must satisfy
\begin{equation}\label{2.8}
\rho_-<\rho_+,\quad \rho_-q_-=\rho_+q_+, \quad\mbox{and}\quad \frac{q^2_-}{2}+\imath(\rho_-)=\frac{q_+^2}{2}+\imath(\rho_+).
\end{equation}
The steady planar normal shock $\overline{\Gamma}_{\scriptstyle{\textrm{shock}}}$ is a transonic shock: ahead of the shock front $\overline{\Gamma}_{\scriptstyle{\textrm{shock}}}$, the uniform coming flow $(\rho_-,(q_-,0,0))$ is supersonic and behind the shock front $\overline{\Gamma}_{\scriptstyle{\textrm{shock}}}$, the flow $(\rho_+,(q_+,0,0))$ is subsonic, \emph{i.e.},
\begin{equation}\label{2.9}
q_-^2>c_-^2=\rho_-^{\gamma-1}\quad \mbox{and}\quad q_+^2<c_+^2=\rho_+^{\gamma -1}.
\end{equation}
Then the triplet $(\overline{\Phi}(t,\mathbf{x}),\overline{\Gamma}_{\scriptstyle{\textrm{shock}}},\overline{\Gamma}_0)$ is called the steady planar normal shock structure, which will be the reference state investigated in this paper. The steady planar normal shock structure can be observed in many situations. For example, if a normal shock appears in a nozzle with flat boundary (for instance the nozzle with rectangular cross-section), then this kind of normal shock coincides locally with the steady planar normal shock structure in Figure \ref{fig1}.

\subsection{Mathematical formulation}
The theme of this paper is to study the dynamic stability of the steady planar normal shock structure $(\overline{\Phi}(t,\mathbf{x}),\overline{\Gamma}_{\scriptstyle{\textrm{shock}}},\overline{\Gamma}_0)$, in the framework of unsteady potential flow equation \eqref{eq:potential-equations}. We want to know whether or not the steady planar normal shock structure persists, at least for a short time, when the uniform supersonic coming flow $(\rho_-,(q_-,0,0))$ is perturbed a little unsteadily and the flat solid wall $\overline{\Gamma}_0$ becomes slightly curved.
Let $\mcw(x_1,x_2)$ be a smooth function. 
We denote by $\Gamma_0:=\{(t,\mathbf{x}):  x_3=\mcw(x_1,x_2)\}$
an impermeable solid boundary of the flow field. Then the whole flow field is
 $$\mathcal{D}\defs\{\mbx\in\mathbb{R}^3: x_3>\mcw(x_1,x_2)\}.$$ 
$\Phi$ satisfies the slip boundary condition 
$\nabla \Phi\cdot \mathbf{n}=0$ on $\Gamma_0$, where $\mathbf{n}$ is the unit exterior normal vector of $\Gamma_0$, \emph{i.e.},
\begin{equation}
\label{eq:potential_bdry_wedge}
-\partial_{x_1}\Phi\partial_{x_1}\mcw-\partial_{x_2}\Phi\partial_{x_2}\mcw+\partial_{x_3}\Phi=0\quad\mbox{on }\quad\Gamma_0.
\end{equation}
Moreover, let the initial states of the fluid be also slightly perturbed such that the initial conditions for $\Phi$ are given as:
\begin{equation}\label{IBC}
\Phi(0,\mathbf{x})=\Phi_0(\mathbf{x})\qquad\mbox{and}\qquad\partial_t\Phi(0,\mathbf{x})=\Phi_1(\mathbf{x}),
\end{equation}
where for $i=0,1$,
\begin{equation}\label{defn of IBC}
\Phi_i(\mathbf{x})\defs
\begin{cases}
\Phi_i^+(\mathbf{x})\quad&\mbox{for }\mbx\in\mathcal{R}^0_+\defs\{x_1>\mathcal{X}(0,x_2,x_3)\}\cap \mcd,\\
\Phi_i^-(\mathbf{x})\quad&\mbox{for }\mbx\in\mathcal{R}^0_-\defs\{x_1<\mathcal{X}(0,x_2,x_3)\}\cap\mcd.
\end{cases}
\end{equation}
Here the initial position $\mathcal{X}(0,x_2,x_3)$ of the perturbed shock front $\Gamma_{\scriptstyle{\textrm{shock}}}$ is a small perturbation of the reference shock front $\overline{\Gamma}_{\scriptstyle{\textrm{shock}}}$.
\vskip 0.2cm
Now the dynamic stability problem (see Figure \ref{fig2}) can be precisely reformulated as following problem:
\vskip 0.2cm
\textbf{Problem 1:} Suppose $\Gamma_{0}$ is a small perturbation of $\overline{\Gamma}_0$, \emph{i.e.}, $\mathcal{W}$ is close to zero and the initial data $(\Phi_0,\Phi_1)$ are small perturbations of $\overline{\Phi}(0,\mathbf{x})$, \emph{i.e.}, $\Phi_0$ is close to $\overline{\mathbf\Phi}(0,{x})$ and $\Phi_1(\mathbf{x})$ is close to zero. One looks for a unique local piece-wise smooth solution $(\Phi(t,\mathbf{x}),\mathcal{X}(t,x_2,x_3))$ to equation \eqref{eq:potential-equations} in the flow field $\mathcal{D}=\{\mbx\in\mathbb{R}^3: x_3>\mcw(x_1,x_3)\}$ such that:
\begin{enumerate}
\item The shock front is given by 
	\[\Gamma_{\scriptstyle{\textrm{shock}}}\defs\{(t,\mbx)\in\mathbb{R}^+\times{\mathbb{R}^3}:x_1=\mathcal{X}(t,x_2,x_3)\},\] which divides the flow field into  $\mathcal{D}_{+}\defs\mathcal{D}\cap\mathcal{R}_{+}$ and $\mathcal{D}_{-}\defs\mathcal{D}\cap\mathcal{R}_{-}$, where $\mathcal{R}_{\pm}$ are defined in \eqref{defn R_pm}.
	
\item $\Phi(t,\mbx)$ is smooth up to either sides of $\Gamma_{\scriptstyle{\textrm{shock}}}$ such that
	 \begin{equation*}
	\Phi(t,\mathbf{x})=
	\begin{cases}
	\Phi^+(t,\mbx) \mbox{\ for\ } (t,\mbx)\in \mathcal{D}_+, \\
\Phi^-(t,\mbx) \mbox{\ for\ } (t,\mbx)\in \mathcal{D}_-,
	\end{cases}
	\end{equation*}
	and $\Phi^{\pm}(t,\mbx)$ satisfy equation \eqref{eq:potential-equations} in $\mathcal{D}_{\pm}$, respectively.
	
\item $\Phi^{\pm}(t,\mbx)$ satisfy the slip boundary condition \eqref{eq:potential_bdry_wedge}, respectively, i.e,
\[-\partial_{x_1}\Phi^\pm\partial_{x_1}\mcw-\partial_{x_2}\Phi^\pm\partial_{x_2}\mcw+\partial_{x_3}\Phi^\pm=0{\quad}\mbox{for\ }(t,\mbx)\in\mathbb{R}^+\times\Gamma_0.\]

\item $\Phi^{\pm}(t,\mbx)$ satisfy the initial conditions \eqref{IBC}-\eqref{defn of IBC}, respectively, i.e.,
\[\Phi^\pm(t,\mbx)|_{t=0}=\Phi^\pm_0(\mbx)\quad\mbox{\ for\ }\mbx\in\mathcal{R}^0_{\pm}\cap\mathcal{D},\]
and 
\[\partial_t\Phi^\pm(t,\mbx)|_{t=0}=\Phi^\pm_1(\mbx)\!\!\!\quad\mbox{\ for\ }\mbx\in\mathcal{R}^0_{\pm}\cap\mathcal{D},\]
where $\mathcal{R}^0_{\pm}$ are the ones defined in \eqref{defn of IBC}.
\item $(\Phi^+(t,\mbx),\Phi^-(t,\mbx),\mathcal{X}(t,x_2,x_3))$ satisfy the Rankine-Hugoniot conditions in \eqref{R-H conditions}.

\item $(\Phi(t,\mbx),\mathcal{X}(t,x_2,x_3))$ is close to the steady normal shock solution $(\overline{\Phi},\overline{\mathcal{X}})$, \emph{i.e.},
$\Phi^{\pm}(t,\mbx)$ is close to
$\overline{\Phi}_{\pm}(t,\mbx)$ in $\mcd_{\pm}$,  respectively, and $\mathcal{X}(t,x_2,x_3)$ is close to $\overline{\mathcal{X}}(t,x_2,x_3)$.
\end{enumerate}

\begin{figure}[ht]
	\begin{center}{\includegraphics[scale=0.7]{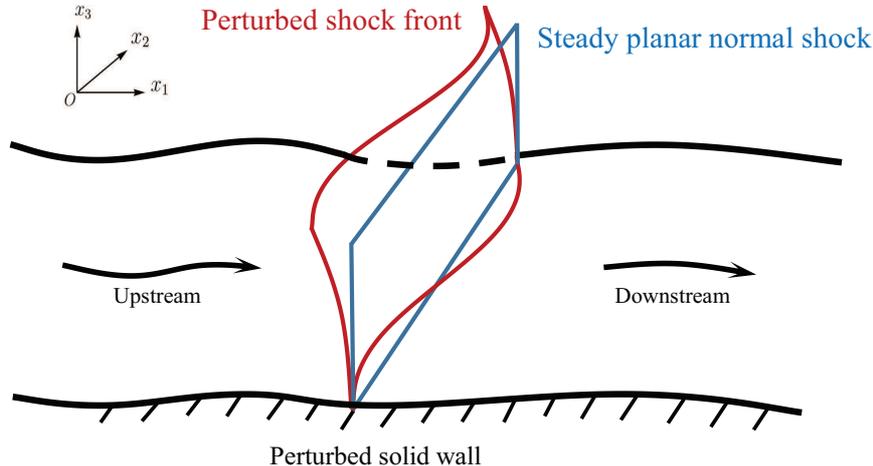}}
		\caption{Persistence of the steady planar normal shock under perturbation.}\label{fig2}
	\end{center}
\end{figure}
\begin{rmk}
Thanks to the  property of the finite speed of propagation of hyperbolic equations and the well-established mathematical theory for initial boundary value problems for hyperbolic equations with smooth boundaries (for instance, see \cite{BenzoniSerre2007}), one can assume that, without loss of generality, the perturbation only occurs near the intersection curve, where the shock front intersects the solid wall $x_3=\mathcal{W}(x_1,x_2)$. Therefore, this paper only solves the stability problem near the edge of the dihedral-space domain, and in a short time.
\end{rmk}



The initial boundary value problem \eqref{eq:potential-equations}, \eqref{eq:potential_bdry_RH-2}, and \eqref{eq:potential_bdry_wedge}-\eqref{defn of IBC} is a free boundary problem in a dihedral-space domain between two surfaces, 
the shock front $\Gamma_{\scriptstyle{\textrm{shock}}}$ and the perturbed solid wall $\Gamma_0$. 
The key difficulty in the mathematical analysis of the problem comes from the singularity of the boundary of the space domain, which is not smooth along the edge of the dihedral-space domain, especially as it couples with other difficulties such as nonlinearity, free boundaries, etc.
In fact, Osher has given examples in \cite{Osher,Osher1} showing that hyperbolic equations in cornered space domain may be ill-posed.
On the other hand, for the well-posedness problem of hyperbolic equations in space-domains with non-smooth boundaries, there are also positive results, for instance, see \cite{GS,Godin,GodinMAMS,Yuan}. In particular, under certain symmetry assumptions, Gazzola-Secchi \cite{GS} studied the inflow-outflow problem in a bounded cylinder. Then Yuan \cite{Yuan} studied the dynamic stability of normal shock in a duct with flat boundaries in two space dimensions. In both works, the symmetry assumptions play an essential role in the analysis, under which the extension techniques can be employed such that the non-smooth domain is reduced into a smooth domain. 
Such symmetry assumptions fail to be valid in the problem \eqref{eq:potential-equations}, \eqref{eq:potential_bdry_RH-2}, and \eqref{eq:potential_bdry_wedge}-\eqref{defn of IBC} studied in this paper, since the solid wall $\Gamma_0$ is a curved surface. Hence the methods developed in \cite{GS,Yuan} are not applicable.
Nevertheless, the assumption that $\Gamma_0$ is a slightly perturbed surface from a flat one implies that there may hold some symmetry properties under certain transformation. Recently in \cite{FXX}, the authors develop an extension technique successfully to deal with the difficulty in a 2-D cornered-space domain. However, the technique cannot be directly applied to the problem in this paper because the singular set of the boundary is no longer a single point, but a curve, which is the edge. Therefore, new methods should be developed and more careful analysis are needed to establish the well-posedness of the solutions in the dihedral-space domain.

Motivated by the extension techniques developed in \cite{FXX} for 2-D case, we shall look for an appropriate transformation, under which it is possible to extend the linearized initial-boundary value problem in the dihedral-space domain into an initial-boundary value problem in the half-space domain. To make it, a modified partial hodograph transformation (see \eqref{partial hodograph trans} for details), different from the transformation employed for the 2-D case, is introduced. Then the problem in the dihedral-space domain will be extended into a problem in a half-space domain, and the unique existence of a  $H^2_\eta$-solution (a weighted Sobolev space) can be established by employing the classical theory for initial-boundary value problems of hyperbolic equations (see \cite{BenzoniSerre2007}, for instance). Similar to the 2-D case, the $H^2_\eta$ regularity is not sufficient to close the nonlinear iteration. Therefore, \emph{a priori} estimates for higher order derivatives are required, which should be established directly in the dihedral-space domain, since the extended coefficients are of low regularity.  Moreover, as the space dimension increases, the analysis needed for the \emph{a priori} estimates for higher order derivatives is more complicated than the 2-D case and it should be dealt with more carefully. Finally, it is worth mentioning that a transformation (see section \ref{sec:4}) is introduced to reformulate the nonlinear problem, which helps to improve the extension argument develop in \cite{FXX}, such that the loss-of-regularity for the \emph{a priori} estimates on the shock-front will not occur. Hence instead of the Nash-Moser iteration scheme employed in \cite{FXX}, a classical nonlinear iteration scheme is sufficient to prove the existence of the solutions to the nonlinear problem.

Up to now, much great progress has been made in the study of weak solutions of multidimensional unsteady compressible Euler equations. For instance, see \cite{Colmbel2002,Colmbel2004,LaiXiangZhou,LiWittYin1,Majda1983E,Majda1983S,MT1987,Metivier,Metivierbook} for the study of shock waves, \cite{Alinhac1989,AlinhacCPDE,ChenChenJHDE} for rarefaction waves, \cite{ChenSecchiWang2019,ChenSecchiWang2020,CouSecchi2004,CouSecchi} for contact discontinuities, \cite{BaeChenFeldmanInvenM,BaeChenFeldmanMAMS,ChenFeldmanAnnals,ChenFeldmanBOOK,CFeldmanX,Chen5,EL,ZhengLiARMA,ZhengLiCMP} for self-similar solutions, and \cite{Al,Bardos,ChiKreml,ChiDeLellisCPAM,DeSz2009,DeSz2010,Klingenberg2018,LuoXieXinAdv} for the non-uniqueness of weak solutions. 

The remainder of the paper is organized as follows. In section \ref{sec:2}, a modified partial hodograph transformation is introduced to fix the free boundary and flat the curved solid wall. Then the dynamic stability problem is reformulated as the well-posedness problem of an initial boundary value problem for a nonlinear hyperbolic equation of second order, in a dihedral-space domain with fixed boundaries. Finally, the main theorem, theorem \ref{wellposedness of the NLP}, is presented at the end of this section. In section \ref{sec:3}, we obtain the well-posedness of a general initial boundary value problem for a linear hyperbolic equation of second order in the dihedral-space domain. In section $\ref{sec:4}$, the nonlinear problem $\eqref{NLP}$ is reformulated. In section $\ref{sec:5}$, an iteration scheme is introduced to solve the reformulated nonlinear problem. Then one proves the main theorem by showing that the iteration scheme provides a sequence of functions which converges to the desired solution, and hence prove the dynamic stability of the steady planar normal shock structure.

\section{Partial hodograph transformation and main result}\label{sec:2}

In this section, we introduce a modified partial hodograph transformation, which is used to fix the free boundary $\Gamma_{\scriptstyle{\textrm{shock}}}$ and straighten the perturbed solid wall $\Gamma_0$. With the aid of this transformation, the previous initial boundary value problem  \eqref{eq:potential-equations}, \eqref{eq:potential_bdry_RH-2}, and \eqref{eq:potential_bdry_wedge}-\eqref{defn of IBC} is mapped to an initial boundary value problem in a dihedral-space domain with fixed boundaries in the new coordinate system. Then Problem 1 is converted to Problem 2 and solving Problem 1 is equivalent to solve Problem 2. Finally, at the end of this section, we present our main result.

\subsection{Partial hodograph transformation}
Let $ \Phi^{-} $ be the potential for the flow field ahead of the shock-front and $ \Phi$ the one behind the shock-front. Extend $\Phi^{-}$ by solving the equation \eqref{eq:potential-equations} with the boundary condition \eqref{eq:potential_bdry_wedge} into the domain ahead of the shock-front, which is at least $C^1$ across the shock-front. More precisely, first we extend $\Phi_0^-(\mathbf{x})$ and $\Phi_1^-(\mathbf{x})$ smoothly into the whole domain $\mathbb{R}^3$. Then solve the initial boundary value problem \eqref{eq:potential-equations}, \eqref{eq:potential_bdry_wedge}, and \eqref{IBC}, where $\Phi_i(\mathbf{x})$ in \eqref{IBC} is replaced by $\Phi_i^-(\mathbf{x})$. Obviously, such solution exists locally (this is reasonable, one can see \cite{ChenSX} for the case of compressible Euler equations, which includes the case of potential flows) and is a solution of Problem 1 when $x_1<\mathcal{X}(t,x_2,x_3)$. Denote by $\Phi^-(t,\mathbf{x})$ this smooth solution and define
\begin{equation}\label{eq:potential_hodograph}
	\phi(t,\mathbf{x})\defs \Phi^{-}(t,\mathbf{x}) - \Phi(t,\mathbf{x}).
\end{equation}
Then the potential equation \eqref{eq:potential-equations} for $ \Phi$ is reformulated as a second order equation for $ \phi $:
\begin{equation}
\label{eq:potential_eq_for_phi}
\sum_{i,j=0}^{3}a_{ij}(\mathrm{D}\phi;\mathrm{D}\Phi^{-})\partial_{x_ix_j}\phi = \sum_{i,j=0}^{3}a_{ij}(\mathrm{D}\phi;\mathrm{D}\Phi^{-})\partial_{x_i x_j}\Phi^{-},
\end{equation}
where
\begin{equation}\label{eq:potential_eq_coefficients1}
	a_{00} = 1,\mbox{\ }a_{0j} =a_{j0}\defs\partial_{x_j}\Phi^{-} - \partial_{x_j}\phi=\partial_{x_j}\Phi,
\end{equation}
and
\begin{equation}\label{eq:potential_eq_coefficients2}
	a_{ij} =a_{ji}\defs -c^2\delta_{ij}+(\partial_{x_i}\Phi^{-} - \partial_{x_i}\phi)(\partial_{x_j}\Phi^{-} - \partial_{x_j}\phi)=-c^2\delta_{ij}+\partial_{x_i}\Phi\partial_{x_j}\Phi
\end{equation}
for $i,j=1,2,3$.

We introduce the following partial hodograph transformation:
\begin{equation}\label{partial hodograph trans}
    \mfp:\left\{
    \begin{array}{ll}
    \!\!y_0=t\\
    \!\!y_1=\phi(t,\mathbf{x})\\
    \!\!y_2=x_2+p(\mathbf{x})\\
    \!\!y_3=x_3-\mcw(x_1,x_2)
\end{array}
\right.
\end{equation}
where
\begin{equation}
    p(\mbx)=\frac{\partial_{x_2}\mcw}{1+|\partial_{x_1}\mcw|^2+|\partial_{x_2}\mcw|^2}(x_3-\mcw(x_1,x_2)).
\end{equation}
Here $p(\mbx)$ is introduced to balance the perturbation on the $x_2$-direction.
\begin{rmk}
In \cite{FXX}, $p(\mathbf{x})$ does not appear in the partial hodograph transformation. While in this paper, $p(\mathbf{x})$ plays an essential role, as it is used to match the perturbations on the $x_2$-direction and $x_3$-direction. As one will see from the proof of lemma \ref{lem 5.1}, the appearance of $p(\mathbf{x})$ guarantees the vanishing property of $\tilde{a}_{23}$ and $\tilde{a}_{32}$ on $\{y_3=0\}$, which is necessary to the application of the extension technique and crucial to the solvability of the linearized problem in the dihedral-space domain.
\end{rmk}
\vspace{-0.3cm}
The inverse of $\mfp$ is
\begin{equation}
    \mfp^{-1}:{\ }t=y_0,{\ }x_1=u(y_0,\mathbf{y}),{\ }
    x_2=x_2(y_0,\mathbf{y}),{\ }
    x_3=y_3+\mcw(u(y_0,\mathbf{y}),x_2(y_0,\mathbf{y})),
\end{equation}
where $(y_0,\mathbf{y}):=(y_0,y_1,y_2,y_3)$ are the time-spatial variables in the new coordinate and $u(y_0,\mby)$ is the new unknown function. Taking the partial derivatives to the equation
$
y_{1}=\phi\circ\mfp^{-1}(y_0,\mathbf{y})
$
with respect to $y_{j}\mbox{\ }(j=0,1,2,3)$, we obtain a linear system with respect to $\mathrm{D}_{t,\mathbf{x}}\phi\defs(\partial_t\phi,\nabla\phi)$. By solving this system, one can express $\mathrm{D}_{t,\mathbf{x}}\phi$ in terms of $\md u\defs(\partial_{y_0}u,\partial_{y_1}u,\partial_{y_2}u,\partial_{y_3}u)$,
\begin{equation}\label{Dphi in terms of Du}
    \left\{\!\!
    \begin{array}{l}
     \partial_t\phi=-\dfrac{\partial_{y_0}u}{\partial_{y_1}u},\vspace{1ex}
     \\
    \partial_{x_1}\phi=-\dfrac{\partial_{x_1}p\partial_{y_2}u-\partial_{x_1}\mcw\partial_{y_3}u-1}{\partial_{y_1}u},\vspace{1ex}
     \\
    \partial_{x_2}\phi=\dfrac{\partial_{x_3}p\partial_{x_2}\mcw\partial_{y_2}u+\partial_{x_2}\mcw\partial_{y_3}u-\partial_{y_2}u}{\partial_{y_1}u},\vspace{1ex}
     \\
    \partial_{x_3}\phi=-\dfrac{\partial_{x_3}p\partial_{y_2}u+\partial_{y_3}u}{\partial_{y_1}u}.
\end{array}
    \right.
\end{equation}

The Jacobi matrix of $\mfp$ is
\[
\frac{\partial(y_0,\mathbf{y})}{\partial(t,\mathbf{x})} =
\begin{bmatrix}
1 & 0 & 0 & 0\\
\partial_t\phi & \partial_{x_1}\phi & \partial_{x_2}\phi &\partial_{x_3}\phi\\
 0&\partial_{x_1}p&1+\partial_{x_2}p&\partial_{x_3}p\\
0 & -\partial_{x_1}\mcw & -\partial_{x_2}\mcw&1   \\
\end{bmatrix}
\defs\frac{1}{\partial_{y_1}u}\mathbf{J}^{\top},
\]
where
\[
\mathbf{J}\defs
\begin{bmatrix}
\partial_{y_1}u&-\partial_{y_0}u&0&0  \\
 0 &\partial_{x_1}\mcw\partial_{y_3}u-\partial_{x_1}p\partial_{y_2}u+1& \partial_{x_1}p\partial_{y_1}u&-\partial_{x_1}\mcw\partial_{y_1}u  &\\
0 & (\partial_{x_3}p\partial_{x_2}\mcw-1)\partial_{y_2}u+\partial_{y_3}u\partial_{x_2}\mcw & (\partial_{x_2}p+1)\partial_{y_1}u&  -\partial_{x_2}\mcw\partial_{y_1}u\\
0&-\partial_{x_3}p\partial_{y_2}u-\partial_{y_3}u&\partial_{x_3}p\partial_{y_1}u&\partial_{y_1}u
\end{bmatrix}.
\]

\subsection{Formulation in new coordinate}
In the remaining part of this paper, time $t$ may be denoted by $y_0$ and vice versa.
After a direct computation, we also obtain
\begin{eqnarray*}
	\frac{\partial\left(\mathrm{D}\phi\right)}{\partial\left(\mathrm{D}u\right)} =-\frac{1}{\left(\partial_{y_1}u\right)^{2}}\mathbf{J}.
\end{eqnarray*}

Denote by $\mathrm{D}^{2}\phi$ the Hessian matrix of $\phi$, i.e.,
\[
\mathrm{D}^{2}\phi = \frac{\partial(\mathrm{D}\phi)}{\partial(t,\mathbf{x})}.
\]
With the help of \eqref{Dphi in terms of Du}, by simple calculation, one has
\begin{align*}
	\mathrm{D}^{2}\phi
	=& \frac{\partial(\mathrm{D}\phi)}{\partial(\mathrm{D}u)}
	\begin{bmatrix}
		\partial_{y_{i}y_{j}}u
	\end{bmatrix}_{4\times 4}
	\frac{\partial(y_0,\mathbf{y})}{\partial(t,\mathbf{x})}+\frac{(-\partial_{x_1x_1}p\partial_{y_2}u+\partial_{x_1x_1}\mcw\partial_{y_3}u)\mathbf{I}_{11}}{\partial_{y_1}u}\\
	&+\frac{(-\partial_{x_1x_2}p\partial_{y_2}u+\partial_{x_1x_2}\mcw\partial_{y_3}u)\mathbf{I}_{12}}{\partial_{y_1}u}-\frac{\partial_{x_1x_3}p\partial_{y_2}u\mathbf{I}_{13}}{\partial_{y_1}u}\\
	&+\frac{1}{\partial_{y_1}u}((\partial_{x_1x_3}p\partial_{y_2}u\partial_{x_2}\mcw+\partial_{x_3}p\partial_{y_2}u\partial_{x_1x_2}\mcw+\partial_{x_1x_2}\mcw\partial_{y_3}u)\mathbf{I}_{21})\\
	&+\frac{1}{\partial_{y_1}u}((\partial_{x_2x_3}p\partial_{y_2}u\partial_{x_2}\mcw+\partial_{x_3}p\partial_{y_2}u\partial_{x_2x_2}\mcw+\partial_{x_2x_2}\mcw\partial_{y_3}u)\mathbf{I}_{22})\\
	&+\frac{\partial_{y_2}u}{\partial_{y_1}u}(-\partial_{x_1x_3}p\mathbf{I}_{31}-\partial_{x_2x_3}p\mathbf{I}_{32}-\partial_{x_3x_3}p\mathbf{I}_{33}+\partial_{x_3x_3}p\partial_{x_2}\mcw\mathbf{I}_{23}),\\
\end{align*}
where $\mathbf{I}_{ij}\defs e_i^\top e_j\in\mathbb{R}^{4\times 4}$ with $\{e_i\}_{i=0}^3$ being the canonical basis of $\mathbb{R}^4$.
Then we have
\begin{align}
\sum_{i,j=0}^{3}a_{ij}\partial_{x_ix_j}\phi
=\tr(\mathbf{A}^{\top}\mathrm{D}^{2}\phi)
=-\frac{1}{\big(\partial_{y_{1}}u\big)^{3}}\sum_{i,j=0}^{3}\tilde{a}_{ij}\partial_{y_iy_j}u+\sum_{i=1}^4 S_i,\label{equation of phi in new coordinate}
\end{align}
where $\mathbf{A}\defs\begin{bmatrix}a_{ij}\end{bmatrix}_{4\times 4}$ with $a_{ij}$ being defined in \eqref{eq:potential_eq_coefficients1}-\eqref{eq:potential_eq_coefficients2} and $\tr(\mathbf{M})$ means the trace of the square matrix $\mathbf{M}$. The coefficients $\tilde{a}_{ij}=\tilde{a}_{ij}(\partial_{x_1}\mcw,\partial_{x_2}\mcw,\mathrm{D}u;\mathrm{D}\Phi^{-})$ satisfy that
\begin{align}
    \begin{bmatrix}\tilde{a}_{ij}\end{bmatrix}_{4\times 4}\defs \mathbf{J}^{\top}\mathbf{A}\mathbf{J}=\tilde{\mathbf{A}}=\tilde{\mathbf{A}}^{\top},\nonumber
\end{align}
and
\begin{align}
    S_1&=a_{11}(-\partial_{x_1x_1}p\frac{\partial_{y_2}u}{\partial_{y_1}u}+\partial_{x_1x_1}\mcw\frac{\partial_{y_3}u}{\partial_{y_1}u}),\nonumber\\
    S_2&=\frac{1}{\partial_{y_1}u}(-a_{13}\partial_{x_1x_3}p\partial_{y_2}u+a_{21}(\partial_{x_1x_3}p\partial_{x_2}\mcw\partial_{y_2}u+\partial_{x_3}p\partial_{x_1x_2}\mcw\partial_{y_2}u+\partial_{x_1x_2}\mcw\partial_{y_3}u)),\nonumber\\
    S_3&=\frac{1}{\partial_{y_1}u}(a_{22}(\partial_{x_3x_3}p\partial_{x_2}\mcw\partial_{y_2}u+\partial_{x_3}p\partial_{x_2x_2}\mcw\partial_{y_2}u+\partial_{x_2x_2}\mcw\partial_{y_3}u)+a_{23}\partial_{x_3x_3}p\partial_{x_2}\mcw\partial_{y_2}u),\nonumber\\
    S_4&=-\frac{\partial_{y_2}u}{\partial_{y_1}u}(a_{31}\partial_{x_1x_3}p+a_{32}\partial_{x_2x_3}p+a_{33}\partial_{x_3x_3}p)\nonumber.
\end{align}
By simple calculation, especially, one has
\begin{align}
\tilde{a}_{03}=\tilde{a}_{30}=&(\partial_{y_1}u)^2\cdot d,\label{eq:a03}\\
\tilde{a}_{13}=\tilde{a}_{31}=&-(\partial_{y_1}u)^2\cdot d+\partial_{y_1}u(\partial_{x_1}\mcw\partial_{y_3}u-\partial_{x_1}p\partial_{y_2}u+1)\cdot d+c^2\partial_{x_1}\mcw)\nonumber\\
&-(\partial_{x_3}p\partial_{y_2}u+\partial_{y_3}u)\partial_{y_1}u(\partial_{x_3}\Phi\cdot d-c^2),\label{eq:a13}\\
\tilde{a}_{23}=\tilde{a}_{32}=&\partial_{x_1}p(\partial_{y_1}u)^2(\partial_{x_1}\Phi\cdot d+c^2\partial_{x_1}\mcw)+(\partial_{x_2}p+1)(\partial_{y_1}u)^2(\partial_{x_2}\Phi\cdot d+c^2\partial_{x_2}\mcw)\nonumber\\
&+((\partial_{x_3}p\partial_{x_2}\mcw-1)\partial_{y_2}u+\partial_{y_3}u\partial_{x_2}\mcw)\partial_{y_1}u(\partial_{x_2}\Phi\cdot d+c^2\partial_{x_2}\mcw)\nonumber\\
&+\partial_{x_3}p(\partial_{y_1}u)^2(\partial_{x_3}\Phi\cdot d-c^2),\label{eq:a23}
\end{align}
where $d=\partial_{x_3}\Phi-\partial_{x_1}\Phi\partial_{x_1}\mcw-\partial_{x_2}\Phi\partial_{x_2}\mcw$.

For the other coefficients, because we do not need the properties of their trace on the boundary, they are listed in the appendix.

From \eqref{eq:potential_eq_for_phi} and \eqref{equation of phi in new coordinate}, we deduce that $u$ satisfies following equation
\begin{equation}\label{equation satisfied by u}
    \sum_{i,j=0}^3\tilde{a}_{ij}\partial_{y_iy_j}u+\tilde{a}_2\partial_{y_2}u+\tilde{a}_{3}\partial_{y_3}u+a_{12}\partial_{x_1x_2}p(\partial_{y_1}u)^3=-(\partial_{y_1}u)^3\sum_{i,j=0}^3a_{ij}\partial_{x_ix_j}\Phi^-,
\end{equation}
where
\begin{align}
    \tilde{a}_2=&(\partial_{y_1}u)^2(a_{11}p_{x_1x_1}+a_{13}\partial_{x_1x_3}p-a_{21}(\partial_{x_2x_3}p\partial_{x_2}\mcw+\partial_{x_3}p\partial_{x_1x_2}\mcw))\nonumber\\
    &+(\partial_{y_1}u)^2(a_{12}\partial_{x_1x_2}p+a_{31}\partial_{x_1x_3}p+a_{32}\partial_{x_2x_3}p-a_{22}\partial_{x_3}p\partial_{x_2x_2}\mcw),\\
    \tilde{a}_3=&(\partial_{y_1}u)^2(-a_{11}\partial_{x_1x_1}\mcw-a_{12}\partial_{x_1x_2}\mcw-a_{21}\partial_{x_1x_2}\mcw-a_{22}\partial_{x_2x_2}\mcw+a_{12}\partial_{x_1x_2}p).\label{eq:2-31}
\end{align}
Assume 
\begin{equation}\label{2.16}
\partial_{x_2}\mcw(x_1,0)=0.
\end{equation} 
Then the partial hodograph transformation $\mathscr{P}$ mapps the axis $x_2=0$ in $(t,\mbx)$-coordinate to the axis $y_2=0$ in $(y_0,\mby)$-coordinate. Moreover
the perturbed solid wall $\Gamma_0$ and the shock front $\Gamma_{\scriptstyle{\textrm{shock}}}$ in $(t,\mathbf{x})$-coordinate are mapped to
\begin{equation}\label{wedge in new coordinate}
    \Gamma_w\defs\{y_0>0,y_1>0,y_3=0\}
\end{equation}
and
\begin{equation}\label{shock font in new coordinate}
    \Gamma_{s}\defs\{y_0>0,y_1=0,y_3>0\},
\end{equation}
respectively.
Substituting the expressions of $\md_{t,\mbx}\phi$ and $p(\mathbf{x})$ into \eqref{eq:potential_bdry_wedge}, we find that $u$ satisfies
\begin{equation}\label{Neumann bdry condition of u on horizontal boundary}
    \partial_{y_3}u=-\frac{\partial_{x_1}\mcw}{1+|\partial_{x_1}\mcw|^2+|\partial_{x_2}\mcw|^2}\quad\mbox{on}\quad\Gamma_w.
\end{equation}
Substituting \eqref{Dphi in terms of Du} into \eqref{eq:potential_bdry_RH-2}, we obtain the Rankine-Hugoniot condition in the new coordinate variables:
\begin{equation}\label{R-H condition in new coordinate}
    G(u,\rm{D}u;\rm{D}\Phi^-)=0\quad\mbox{on}\quad \Gamma_{s},
\end{equation}
where
\begin{align}
    G(u,\rm{D}u;\rm{D}\Phi^-)\defs[\rho][\Phi_t]+[\partial_{x_1}\Phi][\rho\partial_{x_1}\Phi]+[\partial_{x_2}\Phi][\rho\partial_{x_2}\Phi]+[\partial_{x_3}\Phi][\rho\partial_{x_3}\Phi],\label{eq:2-34}
\end{align}
where $\md\Phi$ should be replaced by $\md\Phi^--\md_{t,\mbx}\phi$ and $\md_{t,\mbx}\phi$ should be replaced by $\md u$ via $\eqref{Dphi in terms of Du}$.
For the initial conditions, we assume
\[
u(y_0,\mby)|_{y_0}\defs u_0(\mby)\quad\mbox{and}\quad \partial_{y_0}u(y_0,\mby)|_{y_0=0}\defs u_1(\mby),
\]
where $u_0$ and $u_1$ are some given functions.

For notational simplicity, one defines $Lu$ by
\[
Lu\defs\sum_{i,j=0}^3\limits\tilde{a}_{ij}\partial_{y_iy_j}u+\tilde{a}_2\partial_{y_2}u+\tilde{a}_{3}\partial_{y_3}u+a_{12}\partial_{x_1x_2}p(\partial_{y_1}u)^3+(\partial_{y_1}u)^3\sum_{i,j=0}^3\limits a_{ij}\partial_{x_ix_j}\Phi^-,
\]
where the coefficients depend on $u(y_0,
\mby)$ and its first order derivatives, as well as $\mcw(x_1,x_2)$ and its derivatives up to third order.
Gathering \eqref{equation satisfied by u}, \eqref{Neumann bdry condition of u on horizontal boundary}-\eqref{R-H condition in new coordinate}, and the initial conditions of $u$, we get the initial boundary value problem concerned in this paper:
\begin{equation}\label{NLP}
\begin{cases}
Lu=0&\mbox{in}\quad \Omega_T,\\
G(\mcw'(u),\mathrm{D}u;\mathrm{D}\Phi^-)=0&\mbox{on}\quad\{y_1=0\},\\
G_1\defs(1+|\partial_{x_1}\mcw|^2+|\partial_{x_2}\mcw|^2)\partial_{y_3}u+\partial_{x_1}\mcw=0&\mbox{on}\quad\{y_3=0\},\\
u(y_0,\mathbf{y})=u_0(\mathbf{y}),{\ } u(y_0,\mathbf{y})=u_1(\mathbf{y})&\mbox{on}\quad\{y_0=0\}.
\end{cases}\tag{NLP}
\end{equation}
Here $\Omega_T\defs[0,T]\times\Omega$ and $\Omega\defs\mathbb{R}^+_{y_1}\times\mathbb{R}_{y_2}\times\mathbb{R}^+_{y_3}$, where $\mathbb{R}^+=(0,+\infty)$ and $\mathbb{R}$ is the set of real numbers. Here and after, denote this initial boundary value problem by \eqref{NLP}.

In the $(t,\mathbf{x})$-coordinate, the background state for $\phi$ is
 $$\bar{\phi}(t,\mathbf{x})\defs\overline{\Phi}_{-}(t,\mbx)-\overline{\Phi}_{+}(t,\mbx)=(q_--q_+)x_1.$$
Then the corresponding partial hodograph transformation is
\begin{equation}
y_0=t,\qquad y_1=\bar{\phi}(t,\mathbf{x}),\qquad y_2=x_2,\qquad y_3=x_3,
\end{equation}
and its inverse transformation is
\begin{equation}
t=y_0,\qquad x_1=u_b(\mathbf{y}),\qquad x_2=y_2,\qquad x_3=y_3.
\end{equation}
It is clear that
$$x_1=\frac{1}{q_--q_+}\bar{\phi}(t,\mbx)=\frac{1}{q_--q_+}y_1.$$
Hence we have
\begin{equation}\label{defn of ub}
u_b(\mathbf{y})=\frac{1}{q_--q_+}y_1.
\end{equation}
At the background state, i.e., the state that $u=u_b$, $\mcw(x_1,x_2)\equiv 0$, $\nabla\Phi(t,\mathbf{x})\equiv (q_+,0,0)$ and  $\nabla\Phi^-(t,\mathbf{x})\equiv (q_-,0,0)$, one has
\begin{align}
    &\tilde{a}_{00}=\frac{1}{(q_--q_+)^2}>0,\quad \tilde{a}_{01}=\tilde{a}_{10}=\frac{q_+}{q_--q_+}>0,\quad\tilde{a}_{02}=\tilde{a}_{20}=0,\label{eq:2-40}\\
    &\tilde{a}_{11}=q_+^2-c_+^2<0,\quad{\ \ \!}\quad\tilde{a}_{03}=\tilde{a}_{30}=0,\qquad\qquad\qquad\tilde{a}_{13}=\tilde{a}_{31}=0,\\
    &\tilde{a}_{22}=\frac{-c_+^2}{(q_--q_+)^2}<0,\quad\tilde{a}_{21}=\tilde{a}_{12}=0,\qquad\qquad\qquad\tilde{a}_{23}=\tilde{a}_{32}=0,\\
    &\tilde{a}_{33}=\frac{-c_+^2}{(q_--q_+)^2}<0.\label{eq:2-43}
\end{align}
In $\mathbf{y}$-coordinates, the dynamic stability problem is rewritten as the following problem:
\vskip 0.2 cm
\textbf{Problem 2.} Suppose the initial data $(u_0,u_1)$ and $\mathcal{W}$ are small perturbations of the background state $u_b$ and zero, respectively and $\nabla\Phi^-$ is close to $(q_-,0,0)$. Can we show the local existence and uniqueness of smooth solutions to \eqref{NLP}, such that the unique solution is still close to $u_b$?

The remaining part of this paper is devoted to solving this problem. It is shown that one can indeed find a unique smooth solution to \eqref{NLP} near $u_b$, if the following condition:
\begin{equation}\label{extra condition}
q_-\rho_+-q_+\rho_--\rho_+>0
\end{equation}
holds for the constants $(\rho_-,q_-,\rho_+,q_+)$.

\begin{rmk}
It should be noted that, as one will see from the proof of lemma $\ref{lem 5.1}$, the condition \eqref{extra condition} is employed to guarantee that the steady normal shock solution satisfies the stability conditions, which are defined in \ref{h4} below in the beginning of section \ref{sec:3}.
However, the conditions \eqref{2.8} and \eqref{2.9} are not sufficient to yield \eqref{extra condition}. For example, for any $1<\lambda<\frac{1+\sqrt{5}}{2}$, choose $(q_-,\rho_-,q_+,\rho_+)$ as follows:
\begin{equation}
	q_-=\lambda,{\ } q_+=1,{\ }
	\rho_-=\left(\frac{(\gamma-1)(\lambda^2-1)}{2(\lambda^{\gamma-1}-1)}\right)^{\frac{1}{\gamma-1}},\mbox{\ and\ \ }\rho_+=\lambda\rho_-.
\end{equation}
Then it can be easily verified that \eqref{2.8} and \eqref{2.9} are valid, but \eqref{extra condition} fails:
\begin{equation}
q_-\rho_+-q_+\rho_--\rho_+=(\lambda^2-\lambda-1)\rho_-<0.
\end{equation}
\end{rmk}
\begin{rmk}
It is worth pointing out that, since the solid boundary is perturbed and no longer flat, the symmetry assumptions proposed in \cite{GS,Yuan} fail to be valid in this problem. Therefore, new ideas and methods must be developed to deal with the dihedral singularity, which is also completely different from the one caused by the corner singularity in \cite{FXX}. These are the main new ingredients of this paper.
\end{rmk}
Now, we are ready to state our main result as following theorem:
\begin{thm}\label{wellposedness of the NLP}
\setlength{\abovedisplayskip}{15pt}
For each integer $s_0\ge 3$, suppose the initial-boundary data of \eqref{NLP} satisfy the compatibility condition up to order $s_0+1$.  If conditions \eqref{2.8}, \eqref{2.9}, \eqref{2.16} and \eqref{extra condition} hold, then there exist three constants $\eta_0>1$, $T_0>0$ and $\tilde{\epsilon}>0$ such that if
 \vspace{-0.5cm}
\begin{align}
    &\|u_0-u_b\|_{H^{s_0+1}(\Omega)}+\|u_1\|_{H^{s_0}(\Omega)}+\|\mcw\|_{W^{s_0+2,\infty}(\mathbb{R}^2)}\nonumber\\
    &\qquad+\|e^{-\eta t}(\mathrm{D}\Phi^--(q_-,0,0))\|_{H^{s_0}([0,T]\times\{x_3>\mcw(x_1,x_2)\})}\le\epsilon
\end{align}
is satisfied for $0<T\le T_0$, $\eta\ge \eta_0$ and $\epsilon\le \tilde{\epsilon}$, where $\|\cdot\|_{H^k}$ stands for the standard Sobolev norm.
Then \eqref{NLP} admits a unique  solution $u\in H^{s_0+1}(\Omega_T)$ satisfying
\vspace{-0.5cm}
\begin{equation}
\|e^{-\eta t}(u-u_b)\|_{H^{s_0+1}(\Omega_T)}\le C\epsilon,
\end{equation}
where $C$ is a positive constant depending on $(q_-, q_+, \rho_-, \rho_+,T_0,\eta_0)$.
\end{thm}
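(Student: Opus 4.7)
The plan is to construct a classical Picard-type nonlinear iteration built on a well-posedness theory for an associated linearized problem in the dihedral-space domain $\Omega_T$. The first step would be to reformulate \eqref{NLP} (along the lines of Section \ref{sec:4}) so that the shock-front Rankine--Hugoniot condition \eqref{R-H condition in new coordinate} does not cost a derivative when solved for the trace of $u$ on $\{y_1=0\}$; this reformulation is what ultimately permits a classical, rather than a Nash--Moser, iteration. Freezing the coefficients of the reformulated system at a profile close to the background $u_b$ from \eqref{defn of ub}, I obtain a linear second-order hyperbolic equation with Neumann-type boundary conditions on $\Gamma_w$ and $\Gamma_s$, whose principal symbol at the background is given by \eqref{eq:2-40}--\eqref{eq:2-43}, and which I would study in exponentially weighted Sobolev spaces $H^k_\eta$ with weight $e^{-\eta t}$ for $\eta \ge \eta_0$ large.

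For the linearized IBVP I would follow the extension strategy suggested by the authors' remark after \eqref{partial hodograph trans}: because the extra term $p(\mbx)$ in the modified partial hodograph transformation forces $\tilde{a}_{23}=\tilde{a}_{32}=0$ on $\{y_3=0\}$, an even reflection across the wall face extends the problem into an IBVP on the half-space $\{y_1>0,\ y_2,y_3\in\mathbb{R}\}$ carrying only the single boundary $\{y_1=0\}$, with extended coefficients of controlled (though low) regularity. Under the Majorana-type condition \eqref{extra condition} the shock boundary condition is uniformly Kreiss--Lopatinskii stable at the background, so the standard theory for IBVPs of second-order hyperbolic equations (Benzoni-Gavage--Serre, etc.) yields an $H^2_\eta$ solution and estimate. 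Higher-order bounds cannot be pushed back through the extension, since the extended coefficients are of low regularity, so the $H^{s_0+1}_\eta$-estimate must be derived intrinsically in $\Omega_T$: I would differentiate tangentially in $y_0$ and $y_2$, employ the boundary conditions on $\{y_1=0\}$ and $\{y_3=0\}$ to trade normal derivatives for tangential ones, and iterate to recover all mixed derivatives up to order $s_0+1$.

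With the linear theory in hand, the nonlinear iteration proceeds by defining $u^{(n+1)}$ as the solution of the linearized reformulated problem with coefficients frozen at $u^{(n)}$. Smallness of the initial data, of the wall perturbation $\mcw$, and of $\md\Phi^- - (q_-,0,0)$, together with largeness of $\eta \ge \eta_0$ and smallness of $T \le T_0$, yields closedness and contraction of the iteration in the ball $\{\|e^{-\eta t}(u-u_b)\|_{H^{s_0+1}(\Omega_T)} \le C\epsilon\}$. The compatibility conditions to order $s_0+1$ ensure that the time-traces at $t=0$ computed from the PDE and the two boundary conditions are consistent, which allows the limit to be $H^{s_0+1}$-regular. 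Passing to the limit in $n$ gives the unique solution claimed in the theorem, and reversing the partial hodograph \eqref{partial hodograph trans} finally produces a solution of the original free-boundary problem.

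The main obstacle, in my view, is the verification that the key algebraic vanishing $\tilde{a}_{23}=\tilde{a}_{32}=0$ on $\{y_3=0\}$ survives at the nonlinear level for perturbed coefficients depending on $\mcw$, $\md u$ and $\md\Phi^-$, and that the extended half-space problem then inherits a uniformly stable shock boundary condition on $\{y_1=0\}$ --- this is where \eqref{extra condition} must enter decisively, since, as the authors already note, the Rankine--Hugoniot relations \eqref{2.8} and the entropy condition \eqref{2.9} alone are not enough. A closely related difficulty is the higher-order energy estimate along the dihedral edge $\{y_1=y_3=0\}$: the reflection is already used up at the $H^2_\eta$ level, and the $H^{s_0+1}_\eta$-bound must be organized so that tangential-normal commutators on both faces meet at the edge without producing a trace-loss. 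Once the reformulation of Section \ref{sec:4} is arranged to gain one order of regularity on the shock, the remaining fixed-point argument is essentially standard.
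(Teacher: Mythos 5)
Your proposal follows essentially the same route as the paper: the modified partial hodograph transformation with the corrector $p(\mathbf{x})$ forcing $\tilde{a}_{23}=\tilde{a}_{32}=0$ on $\{y_3=0\}$, reflection across the wall face to obtain the $H^2_\eta$ theory in a half-space, higher-order estimates derived intrinsically in the dihedral domain by exploiting the tangentiality of $\partial_{y_0}$ and $\partial_{y_2}$ and trading normal for tangential derivatives via the equation and boundary conditions, condition \eqref{extra condition} entering precisely to verify the stability hypothesis \ref{h4}, and the Section~\ref{sec:4} reformulation removing the derivative loss on the shock so that a classical high-norm-boundedness/low-norm-contraction iteration (rather than Nash--Moser) closes. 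This matches the paper's proof in structure and in all the key mechanisms.
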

\begin{rmk}\label{rem:2.1}
The compatibility conditions mentioned in Theorem \ref{wellposedness of the NLP} come from the requirement that the initial-boundary data of \eqref{NLP} should be consistent. More precisely, by initial conditions in $\eqref{NLP}$ and the first equation of $\eqref{NLP}$, we know that at $y_0=0$,
\[
\md^{\beta}u=\md^{\beta}u_0,\quad \partial_{y_0}\md^{\beta}u=\md^{\beta}u_1
\]
and
\[
\quad \partial_{y_0}^2\md^{\beta}u=\md^\beta(\frac{1}{\tilde{a}_{00}}(\tilde{f}-\sum_{(i,j)\neq(0,0)}^2\tilde{a}_{ij}\partial_{y_iy_j}u)),
\]
where $\md^{\beta}=\partial_{y_1}^{\beta_1}\partial_{y_2}^{\beta_2}\partial_{y_3}^{\beta_3}$ is the spatial derivatives and $\beta=(\beta_1,\beta_2,\beta_3)$ is the multi-index corresponds to spatial derivative and
\[
\tilde{f}=(\partial_{y_1}u)^3\sum_{i,j=0}^3a_{ij}\partial_{x_ix_j}\Phi^-+\tilde{a}_2\partial_{y_2}u+\tilde{a}_{3}\partial_{y_3}u+a_{12}\partial_{x_1x_2}p(\partial_{y_1}u)^3.
\]
Then by induction on $k$ (i.e., assume we have already known the expression of $\partial_{y_0}^{m+1}\md^{\beta}u$ at $y_0=0$ for all $m\leq k$.) and by taking derivative $\md^{\beta}\partial_{y_0}^k$ on equation $\eqref{NLP}_{1}$, we will have the expression of $\partial_{y_0}^{k+2}\md^{\beta}u$ at $y_0=0$. We omit the details for the shortness. Then we have the expression of $\md^{\alpha}u$ at $y_0=0$ for all $\alpha=(\alpha_0,\alpha_1,\alpha_2,\alpha_3)$. Let
\begin{equation}\label{3.29x}
u_{\alpha}:=\md^{\alpha}u\big|_{y_0=0}.
\end{equation}
On the other hand, we have two boundary conditions in $\eqref{NLP}$.
So for any $(k_0,k_1,k_2,k_3)\in\mathbb{N}^4$, we have
\[
\md^{(k_0,k_1,k_2,0)}G=0\quad\mbox{on }\{y_3=0\}\qquad
\mbox{and}
\qquad
\md^{(k_0,0,k_2,k_3)}G_1=0\quad\mbox{on }\{y_1=0\}.
\]
Let $U:=(u,\md u)$, then by the Fa\'{a} di Bruno's formula and the Leibniz rule, we know there exist $c_{l_1\cdots l_m l'_1\cdots l'_m l''_1\cdots l''_m}(U)$ and $c'_{l_1\cdots l_m l'_1\cdots l'_m l''_1\cdots l''_m}(U)$ such that
\begin{align}
\!\!\!\!\!\!\sum_{m=1}^{\max(k_0,k_1,k_2)}\!\!\!\!\!\!\!\sum_{\substack{l_1+\cdots+l_m=k_0\\ l_1'+\cdots+l_m'=k_1\\l_1''+\cdots+l_m''=k_2}}\!\!\!\!c_{l_1\cdots l_m l'_1\cdots l'_m l''_1\cdots l''_m}(U)\cdot(\md^{(l_1,l_1',l_1'',0)}U,\cdots,\md^{(l_m,l_m',l_m'',0)}U)=0{\ }\mbox{on }\{y_3=0\}\nonumber
\end{align}
and
\begin{align}
\!\!\!\!\!\!\!\!\sum_{m=1}^{\max(k_0,k_2,k_3)}\!\!\!\!\!\!\!\!\sum_{\substack{l_1+\cdots+l_m=k_0\\ l_1'+\cdots+l_m'=k_2\\l_1''+\cdots+l_m''=k_3}}\!\!\!\!c'_{l_1\cdots l_m l'_1\cdots l'_m l''_1\cdots l''_m}(U)\cdot(\md^{(l_1,0,l_1',l_1'')}U,\cdots,\md^{(l_m,0,l_m',l_m'')}U)=0{\ }\mbox{on }\{y_1=0\}.\nonumber
\end{align}
Here integers $l_m$, $l_m'$ and $l_m''$ can be zero. Let $y_0=0$ and plug \eqref{3.29x} into the two identities above for all integers $k_0+k_1+k_2\leq s_0+1$ and $k_0+k_2+k_3\leq s_0+1$. Then we can obtain the identities that the initial and boundary data must satisfy for all integers $k_0+k_1+k_2\leq s_0+1$ and $k_0+k_2+k_3\leq s_0+1$. These identities are called the compatibility conditions up to order $s_0+1$.
\end{rmk}

\section{Well-posedness of the linear problem}\label{sec:3}
In this section, we will establish the well-posedness theorem for an initial boundary value problem of a linear hyperbolic equation of second order in the dihedral-space domain. The linear theorem will be used to solve the \eqref{NLP} by introducing an iteration scheme in the next section.

In the following part of this section, we investigate the following initial boundary value problem
\begin{equation}\label{LP}
\begin{cases}
    L'(u)w=f\quad&\mbox{in}\quad\Omega_T,\\
    \mathcal{B}(u)w=g\quad&\mbox{on}\quad \Gamma_s,\\
    \partial_{y_3}w=0\quad&\mbox{on}\quad\Gamma_w,\\
    (w,\partial_{y_0}w)=(0,0)\quad&\mbox{on}\quad\Gamma_{in}\defs \{y_0=0\},
\end{cases}\tag{LP}
\end{equation}
where
\[
L'(u)\defs\sum_{i,j=0}^3r_{ij}\partial_{ij}+\sum_{i=0}^3r_i\partial_i+r,
\]
\[
\mathcal{B}(u)\defs\sum_{i=0}^3b_i\partial_{i}+b,
\]
$\Omega_T$ is the time-spatial domain defined below \eqref{NLP} in section \ref{sec:2}, $\Gamma_w$ and $\Gamma_s$ are defined by  \eqref{wedge in new coordinate} and \eqref{shock font in new coordinate} respectively.
We impose following hypothesis on the coefficients of the operators $L'(u)$ and $\mathcal{B}(u)$.
\begin{enumerate}[label=($\textbf{H}_\arabic*$)]
\item $L'(u)$ is a hyperbolic operator of second order.
$r_{ij}$, $r_i$ and $r$ $\mbox{\ are\ smooth\ functions of\ }$ $\md\Phi^-,{\ }\md u {\ }\mbox{and}{\ }\mcw(u,x_2(u,y_2,y_3))$. Moreover $r_{32}$, $r_{31}$, $r_{30}$ and $ r_2$ vanish on the flat boundary $\Gamma_w$. In particular, at the background solution $u_b$, which is given in \eqref{defn of ub}, $r_{10}=r_{01}>0$, $r_{12}=r_{21}=0$, $r_{02}=r_{20}=0$, $r_{33}=r_{22}<0$, $r_{30}=r_{03}=r_{31}=r_{13}=r_{32}=r_{23}=0$ and $r_{11}<0$.\label{h1}
\item $b_i$ and $b$ are smooth functions depend on $\md u$ and $\mcw'(u)$ and
$b_3|_{\Gamma_w}=0$. Furthermore, $b=b_2=b_3=0$ at the background solution $u_b$.\label{h2}
\item There exists an integer $n_0\ge 1$ and $\delta>0$ such that $$\sup\limits_{0\le y_0\le T}\sum_{|\alpha|\le n_0+3}\|\md^\alpha(u-u_b)\|_{L^2(\Omega)}<\delta.$$\label{h3}
\item At the background solution $u_b$, the
following stability conditions hold for some constant $\gamma_0>0$:
$$\qquad|b_1|\geq\gamma_0,\quad \frac{\tilde{a}_{11}b_0}{b_1}-r_{01}\geq\gamma_0,\quad\sum_{i,j=0}^3r^{ij}\left(\frac{r_{11}b_i}{b_1}-r_{i1}\right)\left(\frac{r_{11}b_j}{b_1}-r_{j1}\right)\geq\gamma_0.$$
Here $r^{ij}$ is the $(i,j)$-th entry of the matrix $\begin{bmatrix}r_{ij}\end{bmatrix}^{-1}_{4\times 4}$, the inverse matrix of $\begin{bmatrix}r_{ij}\end{bmatrix}_{4\times 4}$.\label{h4}
\end{enumerate}

Let us introduce some notations:
\begin{align*}
  &\widetilde{\Omega}\defs\mathbb{R}^+_{y_1}\times\mathbb{R}_{y_2}\times\mathbb{R}_{y_3},\quad\omega\defs\{0\}\times\mathbb{R}_{y_2}\times\mathbb{R}_{y_3},\\
  &\omega^\ell\defs\omega\cap\{y_3>0\},\quad\quad\omega^r\defs\mathbb{R}^+_{y_1}\times\mathbb{R}_{y_2}\times\{0\}.
\end{align*}
And $\omgrt\defs[0,T]\times \widetilde{\Omega}$ is defined to be the right half time-spatial domain of $[0,T]\times \mathbb{R}^3$. Let $\omega_T:=[0,T]\times \omega$ and let $w^i_T:=[0,T]\times\omega^i$ for $i=\ell$ and $r$.

For the linear problem \eqref{LP}, we have the following theorem.
\begin{thm}\label{well-posedness and energy estimate of the LP}
Suppose assumptions \ref{h1}-\ref{h4} are fulfilled and $\partial^{k}_{t}f|_{t=0}=0$ for $k=0,1,2,\cdots, n_0+2$ with an integer $n_0\geq1$. Then there exists a smooth solution $w$ to \eqref{LP}. Moreover, there exists $\eta_0\ge 1$ and $T_0>0$ such that for all $s\le n_0+3$, the following estimate
\begin{align}
&\sum_{|\alpha|\le s} \eta\|e^{-\eta t}\md^\alpha w\|^2_{L^2(\Omega_{T})}+e^{-2\eta T}\sup_{0\le t\le T}\|\md^\alpha w(t,\cdot)\|_{L^2(\Omega)}^2+\|e^{-\eta t}\md^\alpha w\|^2_{L^2(\omega^\ell_T)}\nonumber\\
&\quad\lesssim \frac{1}{\eta}(e^{2\eta T}\|e^{-\eta t}u\|^2_{H^{s}(\Omega_T)}\|e^{-\eta t}f\|^2_{H^3(\Omega_T)}+\|e^{-\eta t}f\|^2_{H^{s-1}(\Omega_T)})+\|e^{-\eta t}g\|^2_{H^{s-1}(\omega^\ell_T)}\label{ineq: LP energy estimate}
\end{align}
holds for all $\eta\ge \eta_0$ and $0<T\le T_0$.
\end{thm}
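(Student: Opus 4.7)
The plan is to reduce the dihedral-space problem \eqref{LP} to an IBVP on a half-space by an even reflection across the solid wall $\Gamma_w=\{y_3=0\}$, apply the classical Kreiss--Majda $L^2$-theory for hyperbolic IBVPs with non-characteristic boundary to the extended problem on $\widetilde{\Omega}_T$, and then upgrade to $H^s$-regularity with $s\le n_0+3$ by differentiating tangentially and using the equation to recover normal derivatives, working directly in the dihedral domain where the coefficients have the full $H^{n_0+3}$ regularity granted by (H3).

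\textbf{Extension step.} I would define $\tilde{w}(y_0,y_1,y_2,y_3):=w(y_0,y_1,y_2,|y_3|)$, and extend each coefficient of $L'(u)$ and of $\mathcal{B}(u)$ either evenly or oddly in $y_3$ according to the parity forced by the reflection on each monomial of $L'(u)\tilde w$. The homogeneous Neumann condition $\partial_{y_3}w=0$ on $\Gamma_w$ is automatically preserved. The critical observation is that the coefficients which must be reflected oddly are exactly $r_{32},r_{31},r_{30}$ and $r_2$, and \ref{h1} says these vanish on $\Gamma_w$; hence the extended coefficients are continuous (and in fact lie in $H^{n_0+3}_{\mathrm{loc}}$ with one weaker cross-derivative at $\{y_3=0\}$) and the extended operator $\widetilde{L'(u)}$ remains strictly hyperbolic. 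Thus the reflected data $(\tilde f,\tilde g)$ and the extended problem on $\widetilde{\Omega}_T$ are well-defined, and any solution $\tilde w$ there restricts to a solution of \eqref{LP} on $\Omega_T$ by uniqueness, since the even parity of $\tilde w$ is preserved by the equation.

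\textbf{Base $L^2$/$H^2_\eta$ estimate on the half-space.} The extended problem is a standard second-order hyperbolic IBVP on $\widetilde{\Omega}_T$ with the single boundary $\omega_T=\{y_1=0\}$. At the background state $u_b$, the condition $r_{11}<0$ in \ref{h1} makes $\{y_1=0\}$ non-characteristic, while the three inequalities of \ref{h4} encode exactly the uniform Kreiss--Lopatinskii condition for the pair $(L'(u_b),\mathcal{B}(u_b))$: $|b_1|\ge\gamma_0$ rules out a degeneration of $\mathcal{B}$, while the other two inequalities are the ellipticity of the boundary symbol in the glancing and elliptic frequency regimes. Smallness of $u-u_b$ from \ref{h3} makes these properties persist. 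The classical weighted energy/Kreiss-symmetrizer argument (see \cite{BenzoniSerre2007}) then yields the weighted $L^2$-estimate \eqref{ineq: LP energy estimate} for $s\le 2$ together with existence of $\tilde w$, for $\eta\ge\eta_0$ sufficiently large; the factor $1/\eta$ comes from absorbing commutators and the zeroth-order term $r$, and the factor $e^{2\eta T}\|e^{-\eta t}u\|_{H^{s}}^2\|e^{-\eta t}f\|_{H^3}^2$ encodes Moser-type products of coefficient derivatives with $\tilde w$ via Sobolev embedding.

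\textbf{Higher-order estimates and the edge.} For $3\le s\le n_0+3$, I would differentiate tangentially: $\partial_{y_0}^{k_0}\partial_{y_2}^{k_2}$ is tangent to both $\Gamma_s$ and $\Gamma_w$, while $\partial_{y_1}$ is tangent to $\Gamma_w$ and $\partial_{y_3}$ is tangent to $\Gamma_s$. Applying such tangential derivatives to \eqref{LP} and invoking the base estimate on the commutators reduces control of mixed tangential derivatives to control of lower-order terms, provided the commutator $[\partial_{y_i},\mathcal{B}]$ at the edge $\{y_1=0,y_3=0\}$ does not produce traces that fall outside the Kreiss class; here again the vanishing of $r_{32},r_{31},r_{30},r_2$ on $\Gamma_w$ together with $b_3|_{\Gamma_w}=0$ from \ref{h2} is what keeps the edge commutators in the class already handled by the extension. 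The remaining pure normal derivatives $\partial_{y_1}^2w$ and $\partial_{y_3}^2 w$ are recovered algebraically from $L'(u)w=f$ using $r_{11},r_{33}<0$, and a finite induction on $|\alpha|$ yields \eqref{ineq: LP energy estimate} in the claimed range.

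\textbf{Anticipated main obstacle.} The true difficulty is the edge $\{y_1=0,y_3=0\}$, where commutators between tangential vector fields and the shock-boundary operator $\mathcal{B}$ couple normal and tangential derivatives. Without the structural vanishing of the specific coefficients listed in \ref{h1}--\ref{h2} on $\Gamma_w$, the even extension would introduce jumps and the commutator brackets would pick up uncontrolled edge traces, destroying the Kreiss estimate; thus the main work is to verify, order by order, that these structural properties close the commutator algebra at the edge and allow the Moser-type bookkeeping on the right-hand side of \eqref{ineq: LP energy estimate} to be performed without any loss of derivative on $w$ at the shock front.
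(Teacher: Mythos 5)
Your extension step and the $H^2_\eta$ base case match the paper's strategy: the paper also reflects evenly across $\{y_3=0\}$ (extending $r_{30},r_{31},r_{32},b_3$ oddly, which is consistent because \ref{h1}--\ref{h2} make them vanish on $\Gamma_w$), proves existence of an $H^2_\eta$ solution of the extended half-space problem, and recovers the Neumann condition from the even parity by uniqueness. The paper does this with an explicit dissipative multiplier built from $\mathcal{B}$ and \ref{h4} rather than by asserting that \ref{h4} is the uniform Kreiss--Lopatinskii condition, but that difference is cosmetic.

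The genuine gap is in your higher-order step. You propose to control $\md^\alpha w$ for $3\le|\alpha|\le s$ by applying tangential derivatives and then recovering the pure normal derivatives $\partial_{y_1}^2w$, $\partial_{y_3}^2w$ algebraically from $L'(u)w=f$. In the dihedral geometry only $\partial_{y_0}$ and $\partial_{y_2}$ are tangential to \emph{both} faces; $\partial_{y_1}$ is transversal to $\Gamma_s$ and $\partial_{y_3}$ is transversal to $\Gamma_w$. Consequently the mixed derivatives $\partial_{y_1}^a\partial_{y_3}^b w$ with $a,b\ge 1$ (already $\partial_{y_1y_3}w$ at order $3$) are neither tangential derivatives nor algebraically recoverable from the equation: solving $L'w=f$ for $\partial_{y_1}^2$ or $\partial_{y_3}^2$ only exchanges one mixed derivative for another, so the induction you describe does not close. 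This is precisely where the paper's main technical work lies: for each such mixed derivative one must derive \emph{new effective boundary conditions on both faces} --- e.g.\ $\partial_{y_1y_3}w=0$ on $\omega^r_T$ from the differentiated wall condition, and $\partial_{y_1y_3}w=\Lambda$ on $\omega^\ell_T$ by solving the differentiated shock condition for the $\partial_{y_1}$-component using $|b_1|\ge\gamma_0$ --- and then construct multipliers adapted to these Dirichlet--Dirichlet and Dirichlet--Neumann configurations (Lemmas \ref{Multiplier for two Dirichlet boundary conditions} and \ref{Multiplier for almost normal boundary conditions}) so that the boundary quadratic forms $H_1$ and $H_3$ are simultaneously coercive on both faces. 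A further recursive argument (the sequences $\beta_j$, $\alpha_j$ in Step 5) is needed to express high pure-$\partial_{y_3}$ traces on the shock front in terms of controlled quantities, exploiting the smallness of $b_3$ and $r_{13}$. Your proposal contains none of these ingredients, and without them the estimate \eqref{ineq: LP energy estimate} for $s\ge 3$ cannot be obtained along the route you outline.
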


We have used the notation $\lesssim$ in \eqref{ineq: LP energy estimate}. Hereafter $A\lesssim B$ means that $A\le C B$ for some positive constant $C$. Before giving a proof to theorem \ref{well-posedness and energy estimate of the LP}, we define an auxiliary problem \eqref{LPE}.
The coefficients of $L'(u)$, $f$, $g$ and the coefficients of $\mathcal{B}(u)$ will be extended from $\Omega_T$ to $\omgrt$ in the following way:
\begin{itemize}
\item[(\romannumeral1)] Extend $r_{03}=r_{30}$, $r_{13}=r_{31}$, $r_{23}=r_{32}$ and $b_3$ oddly with respect to $\{y_3=0\}$. To be precise, we take $r_{03}$ for example. Extend $r_{03}$ by letting $(Er_{03})(y_0,\mathbf{y})\defs r_{03}(y_0,\mathbf{y})$ when $y_3\ge 0$ and $(Er_{03})(y_0,\mathbf{y})\defs -r_{03}(y_0,y_1,y_2,-y_3)$ when $y_3<0$.  Coefficients other than $r_{03}$, $r_{13}$ and $r_{23}$ will be extend evenly with respect to $\{y_3=0\}$ by the same manner.
    \item[(\romannumeral2)] Extend $f$ and $g$ evenly with respect to $\{y_3=0\}$.
\end{itemize}
For notational simplicity, we omit the $``E"$ for all extended coefficients. The \eqref{LPE} is defined as follows
\begin{equation}\label{LPE}
\begin{cases}
    L'(u)w=f\quad&\mbox{in}\quad\omgrt\\
    \mathcal{B}(u)w=g\quad&\mbox{on}\quad \omega_T\\
    (w,\partial_{y_0}w)=(0,0)\quad&\mbox{on}\quad\Gamma_{in}\defs \{y_0=0\}
\end{cases}.\tag{LPE}
\end{equation}
\begin{rmk}\label{rem:3.1}
The compatibility conditions up to order $n_0+3$ for \eqref{LP} can be obtained by same arguments as Remark \ref{rem:2.1} away from the wedge $y_3=0$. Obviously, due to the regularity of the extended coefficients, \eqref{LPE} only satisfies the compatibility conditions as the one in Remark \ref{rem:2.1} on the wedge up to order $2$. But it is enough for us to show the existence of solutions of the \eqref{LPE} in $H^2(\omgrt)$. Then the better regularity in $\Omega_T$ of such solutions can be obtained by further argument.
\end{rmk}
By employing the idea said in Remark \ref{rem:3.1} above, we will consider the well-posedness of the \eqref{LP} in the next proposition, by considering the well-posedness of the \eqref{LPE}, and proving that the unique solution to $\eqref{LPE}$ is the unique solution to $\eqref{LP}$ and it satisfies better estimate in $\Omega_T$.

\begin{prop}
\label{well-posedness of LPE}
 If hypothesis \ref{h1}-\ref{h4} hold, $\partial^{k}_{t}f|_{t=0}=0$ for $k=0,1,2,\cdots,s-1$, with $s\le n_0+2$, then  \eqref{LP} admits a smooth solution $w$.  Moreover, there exists $\eta_0\ge 1$ such that for $\eta\ge \eta_0$ and $T>0$, it holds that
    \begin{align}
    &\sum_{|\alpha|\le s+1}\left(\eta\|e^{-\eta t}\md^\alpha w\|^2_{L^2(\Omega_T)}+e^{-2\eta T}\|\md^\alpha w|_{t=T}\|^2_{L^2(\Omega)}+\|e^{-\eta t}\md ^\alpha w|_{y_1=0}\|^2_{L^2(\omega^\ell_T)}\right)\nonumber\\
    &\quad\lesssim \frac{1}{\eta}\sum_{|\beta|\le s}\|e^{-\eta t}L'(\md^\beta w)\|^2_{L^2(\Omega_T)}+\|e^{-\eta t}\mcb(\md^\beta w)|_{y_1=0}\|^2_{L^2(\omega^\ell_T)}+\|e^{-\eta t}g\|^2_{H^s(\Omega_T)}\nonumber\\
    &\quad\quad+\sum_{|\alpha|\le s-1}\|\md^\alpha f|_{t=0}\|^2_{L^2(\Omega)}.\label{neq:estimate of lpe}
\end{align}
\end{prop}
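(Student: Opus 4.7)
The plan is to obtain a solution of \eqref{LP} by passing through the auxiliary problem \eqref{LPE}, for which the space domain becomes the smooth half-space $\widetilde\Omega$ with the only boundary at $\{y_1=0\}$. The key observation is that the extension rule is designed so that the operators $L'(u)$ and $\mcb(u)$ on $\omgrt$ commute with the reflection $R_3:(y_0,y_1,y_2,y_3)\mapsto(y_0,y_1,y_2,-y_3)$: the oddly-extended coefficients $r_{03},r_{13},r_{23}$ and $b_3$ pick up a sign under $R_3$ that exactly cancels the sign generated by the odd number of $\partial_{y_3}$-factors they multiply, while the evenly-extended coefficients multiply derivative expressions that are $R_3$-invariant. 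Since the data $f$ and $g$ are extended evenly, if $w$ solves \eqref{LPE} then so does $w\circ R_3$; uniqueness forces $w=w\circ R_3$, so $\partial_{y_3}w=0$ on $\{y_3=0\}$, and hence $w|_{\Omega_T}$ solves \eqref{LP}.

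Existence and uniqueness for \eqref{LPE} on $\omgrt$ follow from the classical theory of second-order hyperbolic initial boundary value problems on a half-space (see, e.g., \cite{BenzoniSerre2007}). The boundary $\{y_1=0\}$ of $\widetilde\Omega$ is smooth, the principal part is hyperbolic in the $y_1$-direction because $r_{11}\neq 0$ by \ref{h1}, and the uniform Kreiss--Lopatinski condition for the boundary operator $\mcb(u)$ is the content of \ref{h4} at the background state $u_b$. By \ref{h3}, the smallness of $u-u_b$ ensures that this condition persists along $u$, so the classical theory yields a solution whose initial regularity is transported from that of the data.

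For the weighted estimate \eqref{neq:estimate of lpe}, the base case $s=0$ is obtained by a Kreiss-symmetrizer / multiplier argument: multiply $L'w=f$ by $e^{-2\eta t}(\beta_0\partial_{y_0}w+\beta_1\partial_{y_1}w)$ with constants $\beta_0,\beta_1$ dictated by \ref{h1} and \ref{h4}, integrate by parts on $\Omega_T$, and use the exponential weight to generate the bulk term $\eta\|e^{-\eta t}\md w\|^2_{L^2}$ with a coefficient proportional to $\eta$. The three sign inequalities of \ref{h4} yield positivity of the boundary quadratic form at $\{y_1=0\}$, producing the trace contribution $\|e^{-\eta t}\md w|_{y_1=0}\|^2_{L^2(\omega^\ell_T)}$ modulo $\|e^{-\eta t}\mcb w\|^2$. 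Taking $\eta$ sufficiently large absorbs the lower-order commutators with the coefficients. For $s\ge 1$, each tangential derivative $\md^\beta w$ with $|\beta|\le s$ satisfies a problem of the same form with interior source $L'(\md^\beta w)$ and boundary source $\mcb(\md^\beta w)$, exactly as they appear on the right-hand side of \eqref{neq:estimate of lpe}; applying the $s=0$ estimate to each $\md^\beta w$ and summing yields the tangential part. Normal derivatives $\partial_{y_1}^k w$ with $k\ge 1$ are recovered from tangential ones by solving $L'w=f$ algebraically for $\partial_{y_1}^2 w$, legitimate since $r_{11}\neq 0$ near $u_b$ by \ref{h1}, and iterating.

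The principal obstacle is a regularity mismatch. The extended coefficients in $\omgrt$ are only Lipschitz across $\{y_3=0\}$, since the oddly-extended $r_{03},r_{13},r_{23},b_3$ vanish there but their $y_3$-derivatives do not. Consequently high-order energy estimates cannot be performed on the extended domain directly, and \eqref{neq:estimate of lpe} must be carried out on $\Omega_T$ itself. The delicate point is controlling the boundary trace on $\omega^\ell_T=\{y_1=0,y_3\ge 0\}$ \emph{without} any loss of regularity in $w$, which depends essentially on the full structural content of \ref{h1}--\ref{h2} together with the sharpness of \ref{h4}; this is also the reason the inductive statement is formulated with $L'(\md^\beta w)$ and $\mcb(\md^\beta w)$ on the right-hand side, rather than $\md^\beta f$ and $\md^\beta g$ with explicit commutators.
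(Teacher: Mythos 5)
Your overall architecture matches the paper's: extend to the half-space, use the reflection symmetry $w\mapsto w(y_0,y_1,y_2,-y_3)$ plus uniqueness to deduce $\partial_{y_3}w|_{y_3=0}=0$, obtain low-order well-posedness from the classical half-space theory, and then — because the extended coefficients are only Lipschitz across $\{y_3=0\}$ — carry out the higher-order estimates back in the dihedral domain $\Omega_T$. Up to and including the second-order estimate this is essentially the paper's argument.

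However, there is a genuine gap in your treatment of the higher-order estimates, which is precisely the core of the proposition. You propose the standard half-space bootstrap: estimate tangential derivatives $\md^\beta w$ by re-applying the base estimate, then recover normal derivatives algebraically by solving $L'w=f$ for $\partial_{y_1}^2 w$ "and iterating." In the dihedral domain this scheme does not close. Only $\partial_{y_0}$ and $\partial_{y_2}$ are tangential to \emph{both} faces $\omega^\ell=\{y_1=0\}$ and $\omega^r=\{y_3=0\}$; $\partial_{y_1}$ is normal to $\omega^\ell$ and $\partial_{y_3}$ is normal to $\omega^r$. Solving the equation for $\partial_{y_1}^2 w$ expresses it in terms of, among others, $\partial_{y_1}\partial_{y_3}w$ and $\partial_{y_3}^2w$, which are neither tangential nor obtainable by further algebraic elimination; at order $m$ the whole family $\partial_{y_1}^{j}\partial_{y_3}^{k}w$ with $j,k\ge 1$ is left uncontrolled by your scheme. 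The paper's resolution — which is the actual content of Steps 3–5 — is to observe that $\partial_{y_1}\partial_{y_3}w$ (and more generally $\partial_{y_1}^{2k-1}\partial_{y_3}w$) satisfies a \emph{Dirichlet} condition on $\omega^r$ (since $\partial_{y_3}w=0$ there and $\partial_{y_1}$ is tangential to $\omega^r$) and an induced condition on $\omega^\ell$ read off from $\mcb w=g$, while $\partial_{y_1}^{2k}\partial_{y_3}w$ satisfies a Dirichlet condition on $\omega^r$ and an oblique condition on $\omega^\ell$; one then constructs two dedicated multipliers $\mcq^d$ and $\mcq^e$ (Lemmas 3.2 and 3.3) whose boundary quadratic forms $-H_1$ and $-H_3$ are \emph{simultaneously} coercive on both faces for these two configurations, and runs an induction that alternates between odd and even orders, recovering the remaining derivatives $\partial_{y_1}^{m-j+1}\partial_{y_3}^{j}w$ algebraically from the equation only after the seed terms $\md\,\partial_{y_1}^{m-1}\partial_{y_3}w$ have been estimated. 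You correctly name the obstacle (loss of regularity of the extended coefficients forcing the estimate into $\Omega_T$, and the danger of losing derivatives on the trace), but you do not supply the mechanism that overcomes it; without the two-face multiplier construction and the odd/even induction, the estimate \eqref{neq:estimate of lpe} for $s\ge 2$ is not proved.
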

\begin{proof}
In the proof of this proposition, $\md^\ell v$ stands for the derivatives of function $v$ of order no higher than $\ell$ and $\abs{\md^\ell v}^p\defs\sum_{|\alpha|\le \ell}\abs{\md^\alpha v}^p$ for $p=1,2$. In what follows, the dependence of the operators $L'$ and $\mcb$ on $u$ is omitted. For brevity, one uses the notation $\partial_{i_1\cdots i_\ell}$ to represent the partial derivative with respect to the variables  $y_{i_1}$, $y_{i_2}$, $\cdots$, $y_{i_{\ell}}$. Since the proof of this proposition is long, we divide it into five steps. In the first two steps, we will illustrate how to derive the energy estimate up to the second order and to show the existence and uniqueness of solutions to \eqref{LPE}, which is indeed a solution to \eqref{LP}, with the aid of the property of the extension. 
It should be emphasized that the estimate of derivatives higher than second order cannot be derived in the half-space domain directly, due to the restriction of the regularity of the extended coefficients. Hence we are forced to establish higher order estimate in the dihedral-space domain in the remaining three steps. For this purpose, two multipliers are constructed to deal with the boundary terms (see lemma \ref{Multiplier for two Dirichlet boundary conditions} and lemma \ref{Multiplier for almost normal boundary conditions}). In the fifth step (the final step), we treat the energy estimates of even order and odd order separately, since we will meet different types of boundary conditions. The estimate in this step still relies on the multipliers constructed in lemma \ref{Multiplier for two Dirichlet boundary conditions} and lemma \ref{Multiplier for almost normal boundary conditions}. It is useful to point out the observation that both $\partial_{y_0}$ and $\partial_{y_2}$ are tangential to the boundaries $\Gamma_s$ and $\Gamma_w$. Hence any established estimate of $w$ can be directly applied to $\partial_{y_0}w$ and $\partial_{y_2}w$, which helps to simplify the higher order estimate.

By the Sobolev embedding theorem and assumption \ref{h2}, one has
$$
\sup_{(y_0,y)\in [0,T]\times \mathbb{R}^3_+}\sum_{|\alpha|\le n_0}\left|\mathrm{D}^\alpha (u-u_b)(y_0,y)\right|\le C\delta.
$$
Since $s\ge[\frac{s+2}{2}]\,\mbox{ if } s\ge 4$, we deduce that if $n_0\geq 4$ then
\begin{equation}\label{Upper bound of un}
\sup_{(y_0,y)\in [0,T]\times \mathbb{R}^3_+}\sum_{|\alpha|\le \frac{n_0+2}{2} }\abs{\mathrm{D}^\alpha (u-u_b)(y_0,y)}\le C\delta.
\end{equation}
As a corollary of \eqref{Upper bound of un} and assumption \ref{h1}, we have
\begin{equation}
\sup_{(y_0,y)\in [0,T]\times \mathbb{R}^3_+}|\md r_{ij}(y_0,y)|\leq C\delta.
\end{equation}

\vskip 0.2cm
\textbf{Step 1:} First order estimate of the solution to \eqref{LPE}.

Multiplying $2e^{-2\eta t}\mcq w$ on both sides of $\eqref{LPE}_1$, where $\mcq \defs \sum_{\ell=0}^3Q_{\ell}\partial_\ell$ will be chosen properly later. Then integrate by parts over $\omgrt$ with respect to $(y_0,\mathbf{y})$, we have
\begin{align}
&\int_{\omgrt}  e^{-2\eta y_0}\left(\mcq w L'w+\mcp(w,\md w)\right)\mathrm{d}y_0\mathrm{d}\mathbf{y}\nonumber\\
&\quad=\int_{\widetilde{\Omega}}\left[e^{-2\eta y_0}H_0\right]^{t=T}_{t=0}\mathrm{d}\mathbf{y}-\int_0^T \!\!e^{-2\eta y_0}H_1|_{y_1=0}\mathrm{d}y_0\mathrm{d}y_2\mathrm{d}y_3
+2\eta\int_{\omgrt}\!e^{-2\eta y_0}H_0\mathrm{d}y_0\mathrm{d}\mathbf{y}\label{1st oder estimate, 1st step}
\end{align}
where
\begin{equation}\label{defn of H0 and H1}
    H_i(\md w;\mcq)=2\sum_{j,\ell=0}^3r_{ij}\partial_jw Q_\ell\partial_\ell w-Q_i\sum_{j,\ell=0}^3r_{\ell j}\partial_\ell w\partial_jw\mbox{ }(i=0,1)
\end{equation}
and $\mcp(w,\md w)$ is a quadratic polynomial in $w$ and $\md w$ with bounded coefficients.
For later use, we also define $H_3$ by
\begin{equation}\label{defn of H2}
     H_3(\md w;\mcq)\defs 2\sum_{j,\ell=0}^3r_{ij}\partial_j w Q_\ell\partial_\ell w-Q_3\sum_{j,\ell=0}^3r_{\ell j}\partial_\ell w\partial_j w.
\end{equation}
It is easy to see
\[
\mathcal{P}\le C(w^2+\abs{\md w}^2).
\]
Choosing $\mcq$ appropriately as
$$
\mathcal{Q}=\widetilde{B}+\nu(\widetilde{B}-\mcn)+\abs{\frac{\nu r_{01}}{\widetilde{B}_0}}\widetilde{B}, $$
where
$
\tilde{B}=\frac{r_{11}}{b_1}\sum_{j=0}^3b_j\partial_j-\sum_{j=0}^3r_{j1}\partial_j$,
$
\mcn=-\sum_{j=0}^3r_{j1}\partial_j
$
and
$\nu=\sum_{i,j=0}^3r^{ij}\widetilde{B}_i\widetilde{B}_j$, where $\widetilde{B}_j$ is the coefficient in $\widetilde{B}$ in front of $\partial_j$. Then by simple calculation, we obtain
\begin{equation}\label{ineq:interior and bdry ineq}
    H_0(\mathrm{D} w)\geq C \abs{\mathrm{D}w}^2\quad\mbox{and}\quad
    -H_1(\mathrm{D} w)\geq C\left(\abs{\mathrm{D}w}^2+\abs{w}^2-\abs{\mathcal{B}w}^2\right).
\end{equation}
In view of $\eqref{1st oder estimate, 1st step}$, $\eqref{ineq:interior and bdry ineq}$, and the Cauchy inequality, one has
\begin{align}
    &\eta \|e^{-\eta t}\md w\|^2_{L^2(\omgrt)}+e^{-2\eta T}\|\md w(T,\cdot)\|_{L^2(\widetilde{\Omega})}^2+\|e^{-\eta t}\md w|_{y_1=0}\|^2_{L^2(\omega_T)}\nonumber\\
    &\quad\lesssim \frac{1}{\varepsilon\eta}\|e^{-\eta t}L'(u)w\|^2_{L^2(\omgrt)}+\varepsilon \eta\|e^{-\eta t}\md w\|_{L^2(\omgrt)}^2+\|e^{-\eta t}(\mcb w,w)\|^2_{L^2(\omega_T)}\nonumber\\
    &\quad\quad+\|(w,\partial_tw)|_{t=0}\|^2_{L^2(\omgrt)}.
\end{align}
Set $\varepsilon=\frac{1}{2C}$, then the second term on the right side is absorbed by the left hand-side term, hence we get
\begin{align*}
    &\eta \|e^{-\eta t}\md w\|^2_{L^2(\omgrt)}+e^{-2\eta T}\|\md w(T,\cdot)\|^2_{L^2(\widetilde{\Omega})}+\|e^{-\eta t}\md w|_{y_1=0}\|^2_{L^2(\omega_T)}\nonumber\\
    &\quad\le C\left(\frac{1}{\eta}\|e^{-\eta t}L'(u)w\|^2_{L^2(\omgrt)}+\|e^{-\eta t}\mcb w\|^2_{L^2(\omega_T)}+\|e^{-\eta t}w\|^2_{L^2(\omega_T)}\right)\nonumber\\
     &\qquad+C\|(w,\partial_tw)|_{t=0}\|^2_{L^2(\omgrt)}.
\end{align*}
Apply $\eqref{eq:3-13}$ to the boundary term of $w$ on the right hand-side of above inequality, then let $\eta$ be properly large, so that $\|e^{-\eta t}w\|^2_{L^2(\omega_T)}$ be absorbed by the left hand-side terms. Then we obtain
\begin{align}
    &\sum_{|\alpha|\le 1}\left(\eta \|e^{-\eta t}\md^\alpha w\|^2_{L^2(\omgrt)}+e^{-2\eta T}\md^\alpha w(T,\cdot)\|_{L^2(\widetilde{\Omega})}^2+\|e^{-\eta t}\md^
    \alpha w|_{y_1=0}\|^2_{L^2(\omega_T)}\right)\nonumber\\
    &\quad\le C\left(\frac{1}{\eta}\|e^{-\eta t}L'(u)w\|^2_{L^2(\omgrt)}+\|e^{-\eta t}\mcb w|_{y_1=0}\|^2_{L^2(\omega_T)}+\|\md w|_{t=0}\|^2_{L^2(\widetilde{\Omega})}\right).\label{first order estimate}
\end{align}
\vskip 0.2cm
\textbf{Step 2:} In this step, we will establish the second order estimate and the well-posedness of \eqref{LPE}. At the end of this step, we show that the unique solution to \eqref{LPE} is indeed a solution to \eqref{LP}.
Applying $\eqref{first order estimate}$ to $\partial_{y_0}w$, $\partial_{y_2}w$ and $\partial_{y_3}w$, we obtain that
\begin{align}
    &\eta \|e^{-\eta t}\md \partial_{y_\ell} w\|^2_{L^2(\omgrt)}+e^{-2\eta T}\|\partial_{y_\ell}w(T,\cdot)\|_{L^2(\widetilde{\Omega})}^2+\|e^{-\eta t}\md \partial_{y_\ell}w|_{y_1=0}\|^2_{L^2(\omega_T)}\nonumber\\
    &\quad\lesssim \frac{1}{\eta}\|e^{-\eta t}L'\partial_{y_\ell}w\|^2_{L^2(\omgrt)}+\|e^{-\eta t}\mcb \partial_{y_\ell}w|_{y_1=0}\|^2_{L^2(\omega_T)}+\|\md\partial_{y_\ell}w|_{t=0}\|^2_{L^2(\widetilde{\Omega})}\label{second order estimate, 1st step}
\end{align}
holds for $\ell=0,2,3$.
By $\eqref{LPE}_1$, one has
\begin{align}
    \partial^2_{y_1}w=\frac{1}{r_{11}}\left(L'w-\sum_{(i,j)\neq(1,1)}r_{ij}\partial_{ij}w-\sum_{i=0}^2r_i\partial_i w-rw\right).
\end{align}
Hence
\begin{align}
    &\eta \|e^{-\eta t} \partial^2_{y_1} w\|^2_{L^2(\omgrt)}+e^{-2\eta T}\|\partial^2_{y_1}w(T,\cdot)\|_{L^2(\widetilde{\Omega})}^2+\|e^{-\eta t} \partial^2_{y_1}w|_{y_1=0}\|^2_{L^2(\omega_T)}\nonumber\\
    &\quad\lesssim \sum_{\ell=0,2,3}\eta \|e^{-\eta t}\md \partial_{y_\ell} w\|^2_{L^2(\omgrt)}+e^{-2\eta T}\|\md\partial_{y_\ell}w|_{t=T}\|_{L^2(\widetilde{\Omega})}^2+\|e^{-\eta t}\md \partial_{y_\ell}w|_{y_1=0}\|^2_{L^2(\omega_T)}\nonumber\\
    &\quad\quad+ \sum_{|\alpha|\le 1}\eta \|e^{-\eta t}\md^\alpha w\|^2_{L^2(\omgrt)}+e^{-2\eta T}\|\md^\alpha w|_{t=T}\|_{L^2(\widetilde{\Omega})}^2+\|e^{-\eta t}\md^\alpha w|_{y_1=0}\|^2_{L^2(\omega_T)}\nonumber\\
    &\quad\quad+\eta \|e^{-\eta t}L'w\|^2_{L^2(\omgrt)}+e^{-2\eta T}\|L'w\|_{L^2(\widetilde{\Omega})}+\|e^{-\eta t}L'w|_{y_1=0}\|^2_{L^2(\omega_T)}.\label{second order estimate, 2nd step}
\end{align}
By $\eqref{first order estimate}$, $\eqref{second order estimate, 1st step}$ and $\eqref{second order estimate, 2nd step}$, we have
\begin{align}
    &\eta \|e^{-\eta t} \partial^2_{y_1} w\|^2_{L^2(\omgrt)}+e^{-2\eta T}\|\partial^2_{y_1}w(T,\cdot)\|_{L^2(\widetilde{\Omega})}^2+\|e^{-\eta t} \partial^2_{y_1}w|_{y_1=0}\|^2_{L^2(\omega_T)}\nonumber\\
    &\quad\lesssim \sum_{|\alpha|\le 1}(\frac{1}{\eta}\|e^{-\eta t}L'(\md^\alpha w)\|^2_{L^2(\omgrt)}+\|e^{-\eta t}\mathcal{B}\md^\alpha w\|^2_{L^2(\omega_T)})+\sum_{\ell=0,2,3}\|\md\partial_{y_\ell}w|_{t=0}\|^2_{L^2(\widetilde{\Omega})}\nonumber\\
    &\quad\quad+\eta \|e^{-\eta t} w\|^2_{L^2(\omgrt)}+e^{-2\eta T}\| w|_{t=T}\|_{L^2(\widetilde{\Omega})}+\|e^{-\eta t} w|_{y_1=0}\|^2_{L^2(\omega_T)}\nonumber\\
    &\quad\quad+\eta \|e^{-\eta t}L'w\|^2_{L^2(\omgrt)}+e^{-2\eta T}\|L'w\|_{L^2(\widetilde{\Omega})}+\|e^{-\eta t}L'w|_{y_1=0}\|^2_{L^2(\omega_T)}.
\end{align}
By integration by parts with respect to $t$ and the trace theorem, we have
\begin{align}
    &\eta\|e^{-\eta t}w\|^2_{L^2(\omgrt)}+e^{-2\eta T}\|w\|^2_{L^2(\widetilde{\Omega})}+\|e^{-\eta t} w|_{y_1=0}\|^2_{L^2(\omega_T)}\nonumber\\
    &\le \frac{1}{\eta}\|e^{-\eta t}\partial_tw\|^2_{L^2(\omgrt)}+\|w|_{t=0}\|^2_{L^2(\widetilde{\Omega})}+\sum_{|\alpha|\le 1}\|e^{-\eta t}\md^\alpha w\|^2_{L^2(\omgrt)}.\label{eq:3-13}
\end{align}
So by \eqref{eq:3-13} and Cauchy inequality, one has
\begin{align}
    &\eta \|e^{-\eta t}L'w\|^2_{L^2(\omgrt)}+e^{-2\eta T}\|L'w\|_{L^2(\widetilde{\Omega})}+\|e^{-\eta t}L'w|_{y_1=0}\|^2_{L^2(\omega_T)}\nonumber\\
    &\quad\lesssim \|L'w|_{t=0}\|^2_{L^2(\widetilde{\Omega})}+\frac{1}{\varepsilon\eta}\sum_{|\alpha|\le 1}(\|e^{-\eta t}\md^\alpha w\|^2_{L^2(\omgrt)}+\|e^{-\eta t}L'(\md^\alpha w)\|^2_{L^2(\omgrt)})\nonumber\\
    &\quad\quad+\varepsilon\eta\sum_{|\alpha|\le 2}\|e^{-\eta t}\md^\alpha w\|^2_{L^2(\omgrt)}.\label{eq:3-14}
\end{align}
In light of \eqref{second order estimate, 1st step}, $\eqref{second order estimate, 2nd step}$, $\eqref{eq:3-13}$ and $\eqref{eq:3-14}$, we obtain the estimate of $\partial^2_{y_1}w$, i.e.,
\begin{align}
    &\eta \|e^{-\eta t} \partial^2_{y_1} w\|^2_{L^2(\omgrt)}+e^{-2\eta T}\|\partial^2_{y_1}w(T,\cdot)\|_{L^2(\widetilde{\Omega})}+\|e^{-\eta t} \partial^2_{y_1}w|_{y_1=0}\|^2_{L^2(\omega_T)}\nonumber\\
    &\quad\lesssim  \frac{1}{\eta}\|e^{-\eta t}\partial_t w\|^2_{L^2(\omgrt)}+\sum_{|\alpha|\le 1}\|e^{-\eta t}\md^\alpha w\|^2_{L^2(\omgrt)}\nonumber\\
    &\quad\quad+\|\md w|_{t=0}\|^2_{L^2(\widetilde{\Omega})}+\sum_{|\alpha|\le 1}(\frac{1}{\eta}\|e^{-\eta t}L'(\md^\alpha w)\|^2_{L^2(\omgrt)}+\|e^{-\eta t}\mathcal{B}\md^\alpha w\|^2_{L^2(\omega_T)})\nonumber\\
    &\quad\quad+\frac{1}{\varepsilon\eta}\sum_{|\alpha|\le 1}(\|e^{-\eta t}\md^\alpha w\|^2_{L^2(\omgrt)}+\|e^{-\eta t}L'(\md^\alpha w)\|^2_{L^2(\omgrt)})+\|L'w|_{t=0}\|^2_{L^2(\widetilde{\Omega})}\nonumber\\
    &\quad\quad+\varepsilon\eta\sum_{|\alpha|\le 2}\|e^{-\eta t}\md^\alpha w\|^2_{L^2(\omgrt)}+\sum_{|\alpha|\le 2}\|\md^\alpha w|_{t=0}\|^2_{L^2(\widetilde{\Omega})}.\label{eq:3-15}
\end{align}
Add up \eqref{first order estimate}, $\eqref{second order estimate, 1st step}$ for $\ell=0,2,3$ and $\eqref{eq:3-15}$, then set $\varepsilon$ and $\frac{1}{\eta}$ to be properly small, we have
\begin{align}
    &\sum_{|\alpha|\le 2}\eta \|e^{-\eta t} \md^\alpha w\|^2_{L^2(\omgrt)}+e^{-2\eta T}\|\md^\alpha w(T,\cdot)\|_{L^2(\widetilde{\Omega})}+\|e^{-\eta t} \md^\alpha w|_{y_1=0}\|^2_{L^2(\omega_T)}\nonumber\\
    &\quad\lesssim \frac{1}{\eta}\sum_{|\alpha|\le 1}\|e^{-\eta t}L'(\md^\alpha w)\|^2_{L^2(\omgrt)}+\|L'w|_{t=0}\|^2_{L^2(\widetilde{\Omega})}+\|e^{-\eta t}\mathcal{B}\md^\alpha w\|^2_{L^2(\omega_T)}\nonumber\\
    &\quad\quad+\sum_{|\alpha|\le 2}\|\md^\alpha w|_{t=0}\|^2_{L^2(\widetilde{\Omega})}\nonumber\\
    &\quad\lesssim \sum_{|\alpha|\le 2}\frac{1}{\eta}\left(\|e^{- \eta t}\md^\alpha w\|^2_{L^2(\omgrt)}+\|e^{-\eta t} \md^\alpha L'w\|^2_{L^2(\omgrt)}\right)+\sum_{|\alpha|\le 1}\|e^{-\eta t}\md^\alpha g|_{y_1=0}\|^2_{L^2(\omega_T)}.\label{eq:3-16}
\end{align}
Let $\eta$ be properly large, we obtain
\begin{align}
    &\sum_{|\alpha|\le 2}\eta \|e^{-\eta t} \md^\alpha w\|^2_{L^2(\omgrt)}+e^{-2\eta T}\|\md^\alpha w(T,\cdot)\|_{L^2(\widetilde{\Omega})}+\|e^{-\eta t} \md^\alpha w|_{y_1=0}\|^2_{L^2(\omega_T)}\nonumber\\
    &\quad\lesssim \frac{1}{\eta}\|e^{-\eta t}f\|^2_{H^1(\omgrt)}+\sum_{|\alpha|\le 1}\|e^{-\eta t}\md^\alpha g|_{y_1=0}\|^2_{L^2(\omega_T)}.\label{eq:3-18}
\end{align}
Based on energy estimate \eqref{eq:3-18}, it is easy to obtain the existence of an $H^2(\widetilde{\Omega}_T)$ solution $w$ of problem \eqref{LPE}. In fact, the existence of \eqref{LPE} has been proved in \cite[Theorem 3.3]{MT1987}, when the coefficients and source terms belong to $H^s(\widetilde{\Omega}_T)$ with $s>\left[\frac{N+1}{2}\right]+1$, where $N$ is the space dimension. Though the regularity of coefficients and source terms of \eqref{LPE} is not enough, we can still deduce the existence of \eqref{LPE}. Firstly, one mollifies the coefficients and the source terms by the convolution of the classical Friedrichs mollifier $\rho_{\varepsilon}$, then by \cite[Theorem 3.3]{MT1987}, there exists a smooth solution $w^\varepsilon$ to the regularized problem for each $\varepsilon>0$. Thanks to our uniform $H^2_{\eta}(\widetilde{\Omega}_T)$ estimate \eqref{eq:3-18}, $\{w^\epsilon\}_{\epsilon>0}$ is strongly compact in $H^1_{\eta}(\widetilde{\Omega}_T)$ and weakly compact in $H^2_{\eta}(\widetilde{\Omega}_T)$. Then passing the limit by letting $\epsilon\rightarrow 0^+$ in the regularized equation, we obtain a $H^2_{\eta}$-solution to the linear problem $\eqref{LPE}$.
If $f=g\equiv 0$, \eqref{eq:3-18} implies $w\equiv 0$ in $\omgrt$. This indicates that the solution to $\eqref{LPE}$ is unique, since $\eqref{LPE}$ is a linear problem. Due to our extension, it is easy to check that $w(y_0,y_1,y_2,-y_3)$ is also a solution to $\eqref{LPE}$. By the uniqueness, we have $w(y_0,y_1,y_2,y_3)=w(y_0,y_1,y_2,-y_3)$ for all $(y_0,\mby)\in \widetilde{\Omega}_T$. Differentiating with respect to $y_3$ on both sides of this equality  and letting $y_3=0$, one has
\[
\partial_{y_3}w|_{y_3=0}=-\partial_{y_3}w|_{y_3=0},
\]
which implies $\partial_{y_3}w|_{y_3=0}=0$. From \eqref{eq:3-18} and the trace theorem, we know $\partial_{y_3}w$ is  a $L^2$ function on $\{y_3=0\}$, so above process makes sense. Therefore, we conclude that the unique solution to $\eqref{LPE}$ is indeed the unique solution to $\eqref{LP}$. 

\vskip 0.2cm
\textbf{Step 3:} In the remaining steps, we will improve the regularity of the unique solution by deriving higher order estimate in the dihedral-space domain. In this step, we will consider the third order estimate.
Since both $\partial_{y_0}$ and $\partial_{y_2}$ are tangential to both the solid wall $\Gamma_w$ and shock front $\Gamma_s$ and all the coefficients are smooth in the directions of $y_0$ and $y_2$.
We can apply the first inequality of \eqref{eq:3-16} to $\partial_{y_0}w$ and $\partial_{y_2}w$ respectively, to obtain
\begin{align}
    &\|e^{-\eta t}\md^\alpha \partial_{y_i}w\|^2_{L^2(\omgrt)}+e^{-2\eta T}\|\md^\alpha \partial_{y_i}w\|^2_{L^2(\widetilde{\Omega})}+\|e^{-\eta t}\md^\alpha \partial_{y_i}w|_{y_1=0}\|^2_{L^2(\omega_T)}\nonumber\\
    &\qquad\lesssim \sum_{|\alpha|\le 1}(\frac{1}{\eta}\|e^{-\eta t}L'(\md^\alpha\partial_{y_i} w)\|^2_{L^2(\omgrt)}+\|e^{-\eta t}\mathcal{B}\md^\alpha\partial_{y_i}w\|^2_{L^2(\omega_T)})\nonumber\\
    &\qquad\quad+\|\md f|_{t=0}\|^2_{L^2(\widetilde{\Omega})}\label{ineq:3-18}
\end{align}
for $i=0,2$. Here we use the fact that $\sum_{|\alpha|\le 3}\|\md^\alpha w|_{t=0}\|^2_{L^2(\widetilde{\Omega})}\lesssim \sum_{j=0}^1\|\md f|_{t=0}\|^2_{L^2(\widetilde{\Omega})}$, which comes from the equation and the initial data. In the coming steps, the estimate we obtained in each step will be applied to $\partial_{y_0}w$ and $\partial_{y_2}w$ in the next step, because of the same reason as stated above.
To control all other derivatives of third order, we need to estimate derivatives in the form of $\partial^{k_1}_{y_1}\partial^{k_2}_{y_2}w$ with $k_1+k_2=3$.
Due to the limit of the regularity of the extended coefficients, we cannot obtain higher order estimate in $\omgrt$ directly. In the following steps, all estimates are restricted to the cornered time spatial domain $\Omega_T$. Before going on, we present the following lemma.
\begin{lem}\label{Multiplier for two Dirichlet boundary conditions}
Let $H_m$ be defined as in \eqref{defn of H0 and H1} and \eqref{defn of H2}. For any given $r_{ij}$ satisfying assumptions \ref{h1}-\ref{h4},
we can find a multiplier $\mcq^d=\sum_{i=0}^3Q^d_i$ such that
\begin{align}
&H_0(\mathrm{D}w;\mcq^d)\ge C_1\abs{\nabla_{\mby}w}^2-C_2\abs{\partial_{y_0}w}^2,\label{ineq:3-19}\\		-&H_1(\mathrm{D}w;
\mcq^d)\ge C_1\abs{\partial_{y_1}w}^2-C_2(\abs{\partial_{y_0}w}^2+\abs{\partial_{y_2}w}^2+\abs{\partial_{y_3}w}^2),\label{ineq:3-20}
\end{align}
where $\nabla_{\mby}\defs(\partial_{y_1},\partial_{y_2},\partial_{y_3})$.
Moreover, if $w=0$ on $\{y_3=0\}$, then
\begin{align}\label{ineq:3-21}
-&H_3(\mathrm{D}w;\mcq^d)\ge C\abs{\partial_{y_3}w}^2\quad\mbox{on}\quad\{y_3=0\}.
\end{align}
\end{lem}
\begin{proof}
    It is convenient to denote $\partial_{y_i}w$ by $\xi_i$ for $i=0,1,2,3$.
    At the background solution, by simple calculation, one has
    \begin{align}
    	-H_1(\md w;\mathcal{Q}^d)&=(-2r_{10}Q^d_0+r_{00}Q^d_1)\xi_0^2+Q^d_1(-r_{11}\xi_1^2+r_{22}\xi_2^2+r_{33}\xi_3^2)\nonumber\\
    	&\quad-2r_{11}Q^d_0\xi_0\xi_1-2r_{10}\xi_0\xi_2+2Q^d_3r_{10}\xi_0\xi_3-2r_{11}Q^d_2\xi_1\xi_2\nonumber\\
    	&\quad-2r_{11}Q^d_3\xi_1\xi_3.
    \end{align}
    Choosing $Q^d_1$ such that $-Q^d_1r_{11}>0$, then \eqref{ineq:3-20} follows easily. At the background solution $u_b$, we know $r_{11}=-\frac{c_+^2}{(q_--q_+)^2}<0$. So one just needs to let $Q^d_1>0$.
    For $H_0(\md w;\mathcal{Q}^d)$, at the background solution one has
   \begin{align}
   	H_0(\md w;\mathcal{Q}^d)&=2r_{10}\xi_1(Q^d_0\xi_0+Q^d_1\xi_1+Q^d_2\xi_2+Q^d_3\xi_3)\nonumber\\
   	&\quad-Q^d_0(r_{00}\xi_0^2+r_{11}\xi_1^2+r_{22}\xi_2^2+r_{33}\xi_3^2+2r_{10}\xi_0\xi_1).
   \end{align}
   If we can let the coefficient before $\xi^2_1$ be positive, then \eqref{ineq:3-20} follows immediately. In fact, it suffices to let $2r_{10}Q^d_1-Q^d_0r_{11}>0$. Since $Q^d_1$ has been set to be positive, $r_{10}>0$, and $r_{11}<0$ at the background solution, it is sufficient to let $Q^d_0$ be positive.
   At the background solution, one has
   \begin{align}
   	-H_3(\md w;\mathcal{Q}^d_1)=-r_{33}Q^d_3\xi^2_3.
   \end{align}
   Hence \eqref{ineq:3-21} follows if we let $Q^d_3>0$, since $r_{33}<0$ at the background solution.
\end{proof}
Armed with lemma \ref{Multiplier for two Dirichlet boundary conditions}, we can obtain the first order estimate of $\partial_{y_1y_3}w$. In fact, $\partial_{y_1y_3}w$ satisfies
\begin{equation}\label{equation-satisfied-by-partial-y1y3w}
\left\{\!\!
\begin{array}{ll}
L'\left(\partial_{y_1y_3}w\right)=\partial_{y_1y_3}f-[\partial_{y_1y_3},L']w\qquad&\mbox{in }\Omega_T,\\
\partial_{y_1y_3}w=0,\qquad&\mbox{on }\omega^r_T,\\
\partial_{y_1y_3}w=\Lambda,\qquad&\mbox{on }
\omega^\ell_T,\\
(\partial_{y_0}(\partial_{y_1y_3}w),\partial_{y_1y_3}w)=(0,0)\qquad&\mbox{on }\Gamma_{in}.
\end{array}
\right.
\end{equation}
where
\begin{align}\label{defn of Lambda}
\Lambda=&\frac{1}{b_1}\left(\mathcal{B}(\partial_{y_3}w)-b_0\partial_{y_0y_3}w-b_2\partial_{y_2y_3}w-b_3\partial_{y_3y_3}w-b\partial_{y_3}w\right).
\end{align}
Problem \eqref{equation-satisfied-by-partial-y1y3w} is an initial boundary value problem in a dihedral space-domain with two Dirichlet boundary conditions.
Multiplying $\eqref{equation-satisfied-by-partial-y1y3w}_1$ by $2e^{-2 \eta t}\mcq^d (\partial_{y_1y_3}w)$, where $\mcq^d$ is given in lemma \ref{Multiplier for two Dirichlet boundary conditions}. Then integrating on both sides with respect to $(y_0,\mathbf{y})$ over $\Omega_T$ and by using Cauchy inequality, we obtain
\begin{align}
    &\eta\int_{\Omega_T} e^{-2\eta t}H_0\mathrm{d}y_0\mathrm{d}\mby+e^{-2\eta T}\int_{\Omega}H_0|_{t=T}\mathrm{d}\mby-\int_{\omega^\ell_T}e^{-2\eta t}H_1\mathrm{d}y_0\mathrm{d}\mby-\int_{\omega^r_T}e^{-2\eta t}H_3\mathrm{d}y_0\mathrm{d}\mby\nonumber\\
    &\qquad\lesssim\frac{1}{\varepsilon\eta}\|e^{-\eta t}L'(\partial_{y_1y_3}w)\|^2_{L^2(\Omega_T)}+(\varepsilon \eta+1)\|e^{-\eta t}\md \partial_{y_1y_3}w\|^2_{L^2(\Omega_T)}.\label{ineq:3-24}
\end{align}
By lemma \ref{Multiplier for two Dirichlet boundary conditions}, one knows that
\begin{align}
    -H_3&\geq C|\partial_{y_3}(\partial_{y_1y_3}w)|^2,\label{ineq:3-25}\\
     -H_1
    &\geq C_1|\partial_{y_1}(\partial_{y_1y_3}w)|^2-C_2(|\partial_{y_0}\Lambda|^2+|\partial_{y_2}\Lambda|^2+|\partial_{y_3}\Lambda|^2),\label{ineq:3-26}\\
    H_0&\geq C_1\abs{\nabla_{\mby}\partial_{y_1y_3}w}^2-C_2\abs{\partial_{y_0}(\partial_{y_1y_3}w)}^2.\label{ineq:3-27}
\end{align}
From \eqref{ineq:3-24}-\eqref{ineq:3-27} and letting $\varepsilon$ be properly small, one obtains
\begin{align}
    &\eta\|\nabla_{\mby}\partial_{13}w\|^2_{L^2(\Omega_T)}
    +e^{-2\eta T}\|\nabla_{\mby}\partial_{13}w\|^2_{L^2(\Omega)}+\|e^{-\eta t} \partial_{113}w\|^2_{L^2(\omega_T^{\ell})}+\|e^{-\eta t} \partial_{313}w\|^2_{L^2(\omega_T^{r})}\nonumber\\
    &\quad\quad\lesssim\frac{1}{\eta}\|e^{-\eta t}L'( \partial_{13}w)\|^2_{L^2(\Omega_T)}+\eta\|e^{-\eta t}\partial_{0}\partial_{13}w\|^2_{L^2(\Omega_T)}+e^{-2\eta T}\|\partial_{0}\partial_{13}w\|^2_{L^2(\Omega)}\nonumber\\
    &\qquad\quad+\|e^{-\eta t}(\partial_{y_0}\Lambda,\partial_{y_2}\Lambda,\partial_{y_3}\Lambda)\|^2_{L^2(\omega^\ell_T)}.\label{ineq:3-28}
\end{align}
From $\eqref{defn of Lambda}$, we have
\begin{align}
&\abs{\partial_{y_0}\Lambda}+\abs{\partial_{y_2}\Lambda}+\abs{\partial_{y_3}\Lambda}\nonumber\\
&\qquad\quad\lesssim\sum_{j\neq 1}\abs{\mathcal{B}\partial_{3j}w}+\sum_{|\alpha|\le 2}(\abs{\md^\alpha\partial_{y_0}w}+\abs{\md^\alpha w})\nonumber\\
&\qquad\qquad+\|b_2\|_{L^\infty}(\abs{\partial_{223}w}+\abs{\partial_{233}w})+\|b_3\|_{L^\infty}\abs{\partial_{333}w}.\label{ineq:3-29}
\end{align}
Combining $\eqref{eq:3-18}$, $\eqref{ineq:3-28}$ and $\eqref{ineq:3-29}$ and the second order estimate, we obtain
\begin{align}
    &\eta\|\nabla_{\mby}\partial_{13}w\|^2_{L^2(\Omega_T)}
    +e^{-2\eta T}\|\nabla_{\mby}\partial_{13}w\|^2_{L^2(\Omega)}+\|e^{-\eta t} \partial_{113}w\|^2_{L^2(\omega_T^{\ell})}+\|e^{-\eta t} \partial_{313}w\|^2_{L^2(\omega_T^{r})}\nonumber\\
    &\quad\lesssim\sum_{|\alpha|\le 1}(\frac{1}{\eta}\|e^{-\eta t}L'(\md^\alpha\partial_{y_0} w)\|^2_{L^2(\Omega_T)}+\|e^{-\eta t}\mathcal{B}\md^\alpha\partial_{y_0}w\|^2_{L^2(\omega_T^\ell)})\nonumber\\
    &\qquad+\frac{1}{\eta}\sum_{|\alpha|\le 1}\|e^{-\eta t}L'(\md^\alpha w)\|^2_{L^2(\Omega_T)}+\|e^{-\eta t}\mathcal{B}\md^\alpha w\|^2_{L^2(\omega_T^\ell)}+\frac{1}{\eta}\|e^{-\eta t}L'( \partial_{13}w)\|^2_{L^2(\Omega_T)}\nonumber\\
    &\qquad+\|b_2\|^2_{L^\infty(\Omega_T)}\cdot\|(e^{-\eta t}\partial_{223}w,e^{-\eta t}\partial_{233}w)\|^2_{L^2(\omega^\ell_T)}+\sum_{|j\neq 1|}\|e^{-\eta t}\mcb\partial_{3j}w\|^2_{L^2(\omega_T^\ell)}\nonumber\\
&\qquad+\|b_3\|^2_{L^\infty(\omega_T^\ell)}\cdot\|e^{-\eta t}\partial_{333}w\|^2_{L^2(\omega_T^\ell)}+\|\md f|_{t=0}\|^2_{L^2(\Omega)}\nonumber\\
&\quad\lesssim\sum_{|\alpha|\le 2}\left( \frac{1}{\eta}\|e^{-\eta t}L'(\md^\alpha w)\|^2_{L^2(\Omega_T)}+\|e^{-\eta t}\mathcal{B}\md^\alpha w\|^2_{L^2(\omega_T^\ell)}\right)\nonumber\\
&\qquad+\|b_2\|^2_{L^\infty(\Omega_T)}\cdot(\|e^{-\eta t}\partial_{223}w\|^2_{L^2(\omega^\ell_T)}+\|e^{-\eta t}\partial_{233}w\|^2_{L^2(\omega_T^\ell)})\nonumber\\
&\qquad+\|b_3\|^2_{L^\infty(\omega_T^\ell)}\cdot\|e^{-\eta t}\partial_{333}w\|^2_{L^2(\omega_T^\ell)}+\|\md f|_{t=0}\|^2_{L^2(\widetilde{\Omega})}.\label{ineq:3-30}
\end{align}
But \eqref{ineq:3-29} implies that
\begin{align}
    &\|e^{-\eta t}\partial_{213}w\|^2_{L^2(\omega^\ell_T)}+|e^{-\eta t}\partial_{313}w\|^2_{L^2(\omega^\ell_T)}\nonumber\\
    &\qquad\lesssim\sum_{|\alpha|\le 2}\left( \frac{1}{\eta}\|e^{-\eta t}L'(\md^\alpha w)\|^2_{L^2(\Omega_T)}+\|e^{-\eta t}\mathcal{B}\md^\alpha w\|^2_{L^2(\omega_T^\ell)}\right)\nonumber\\
&\qquad\quad+\|b_2\|^2_{L^\infty(\Omega_T)}\cdot(\|e^{-\eta t}\partial_{223}w\|^2_{L^2(\omega^\ell_T)}+\|e^{-\eta t}\partial_{233}w\|^2_{L^2(\omega_T^\ell)})\nonumber\\
&\quad\qquad+\|b_3\|^2_{L^\infty(\omega_T^\ell)}\cdot\|e^{-\eta t}\partial_{333}w\|^2_{L^2(\omega_T^\ell)}+\|\md f|_{t=0}\|^2_{L^2(\widetilde{\Omega})}.\label{ineq:3-31}
\end{align}
Then the sum of \eqref{ineq:3-30} and \eqref{ineq:3-31} indicates that
\begin{align}
     &\eta\|\nabla_{\mby}\partial_{13}w\|^2_{L^2(\Omega_T)}
    +e^{-2\eta T}\|\nabla_{\mby}\partial_{13}w\|^2_{L^2(\Omega)}+\|e^{-\eta t} \nabla_{\mby}\partial_{13}w\|^2_{L^2(\omega_T^{\ell})}+\|e^{-\eta t} \partial_{313}w\|^2_{L^2(\omega_T^{r})}\nonumber\\
     &\qquad\lesssim\sum_{|\alpha|\le 2}\left( \frac{1}{\eta}\|e^{-\eta t}L'(\md^\alpha w)\|^2_{L^2(\Omega_T)}+\|e^{-\eta t}\mathcal{B}\md^\alpha w\|^2_{L^2(\omega_T^\ell)}\right)\nonumber\\
&\qquad\quad+\|b_2\|^2_{L^\infty(\Omega_T)}\cdot(\|e^{-\eta t}\partial_{223}w\|^2_{L^2(\omega^\ell_T)}+\|e^{-\eta t}\partial_{233}w\|^2_{L^2(\omega_T^\ell)})\nonumber\\
&\quad\qquad+\|b_3\|^2_{L^\infty(\omega_T^\ell)}\cdot\|e^{-\eta t}\partial_{333}w\|^2_{L^2(\omega_T^\ell)}+\|\md f|_{t=0}\|^2_{L^2(\widetilde{\Omega})}.\label{ineq:3-32}
\end{align}
By \ref{h2} and \ref{h3}, we know that $\|b_2\|_{L^\infty}$ and $\|b_3\|_{L^\infty}$ are small, provided the $\delta$ in \ref{h3} is set to be sufficiently small. It will be shown later that the third order derivatives on the right hand-side of \eqref{ineq:3-32} can be absorbed by the left hand-side terms.

 Armed with the second order estimate of $\partial_{y_0}w$ and $\partial_{y_2}w$ and the estimate of $\nabla_{\mby}\partial_{13}w$, one can deduce the estimate of other third order derivatives. It is easy to see
\begin{align}
    \partial_{111}w=\frac{1}{r_{11}}\left(L'(\partial_{y_1}w)-\sum_{(i,j)\neq(1,1)}r_{ij}\partial_{ij}\partial_{y_1}w-\sum_{j=0}^3r_j\partial_j\partial_{y_1}w-r\partial_{y_1}w\right).\label{ineq:3-35}
\end{align}
Hence one has
\begin{align}
    \abs{\partial_{111}w}\lesssim \abs{L'(\partial_{y_1}w)}+\abs{\nabla_{\mby}\partial_{13}w}+\sum_{|\alpha|\le 2}(\abs{\md^\alpha \partial_{y_0}w}+\abs{\md^\alpha \partial_{y_2}w}+\abs{\md^\alpha w}).
\end{align}
This leads to the estimate of $\partial_{111}w$ in terms of the controlled terms on the right hand-side of \eqref{ineq:3-35}. In fact, one has
\begin{align}
    &\eta\|e^{-\eta t}\partial_{111}w\|^2_{L^2(\Omega_T)}+e^{-2\eta T}\|\partial_{111}w\|^2_{L^2(\Omega)}+\|e^{-\eta t}\partial_{111}w\|^2_{L^2(\omega^\ell_T)}\nonumber\\
    &\qquad\lesssim\sum_{|\alpha|\le 2}\left( \frac{1}{\eta}\|e^{-\eta t}L'(\md^\alpha w)\|^2_{L^2(\Omega_T)}+\|e^{-\eta t}\mathcal{B}\md^\alpha w\|^2_{L^2(\omega_T^\ell)}\right)\nonumber\\
&\qquad\quad+\|b_2\|^2_{L^\infty(\Omega_T)}\cdot(\|e^{-\eta t}\partial_{223}w\|^2_{L^2(\omega^\ell_T)}+\|e^{-\eta t}\partial_{233}w\|^2_{L^2(\omega_T^\ell)})\nonumber\\
&\qquad\quad+\|b_3\|^2_{L^\infty(\omega_T^\ell)}\cdot\|e^{-\eta t}\partial_{333}w\|^2_{L^2(\omega_T^\ell)}+\|\md f|_{t=0}\|^2_{L^2(\widetilde{\Omega})}.\label{ineq:3-36}
\end{align}
For $\partial_{333}w$, we have
\begin{align}
    \partial_{333}w=\frac{1}{r_{33}}\left(L'(\partial_{y_3}w)-\sum_{(i,j)\neq(3,3)}r_{ij}\partial_{ij}\partial_{y_3}w-\sum_{j=0}^3r_j\partial_j\partial_{y_3}w-r\partial_{y_3}w\right).
\end{align}
It is clear that $\partial_{333}w$ is the finite combination of $\md^2\partial_{y_0}w$, $\md^2\partial_{y_2}w$ $\nabla_{\mby}\partial_{13}w$ and lower order terms, whose estimate has been established. Hence we conclude that
\begin{align}
    &\eta\|e^{-\eta t}\partial_{333}w\|^2_{L^2(\Omega_T)}+e^{-2\eta T}\|\partial_{333}w\|^2_{L^2(\Omega)}+\|e^{-\eta t}\partial_{333}w\|^2_{L^2(\omega^\ell_T)}\nonumber\\
    &\qquad\lesssim\sum_{|\alpha|\le 2}\left( \frac{1}{\eta}\|e^{-\eta t}L'(\md^\alpha w)\|^2_{L^2(\Omega_T)}+\|e^{-\eta t}\mathcal{B}\md^\alpha w\|^2_{L^2(\omega_T^\ell)}\right)\nonumber\\
&\quad\qquad+\|b_2\|^2_{L^\infty(\Omega_T)}\cdot(\|e^{-\eta t}\partial_{223}w\|^2_{L^2(\omega^\ell_T)}+\|e^{-\eta t}\partial_{233}w\|^2_{L^2(\omega_T^\ell)})\nonumber\\
&\quad\qquad+\|b_3\|^2_{L^\infty(\omega_T^\ell)}\cdot\|e^{-\eta t}\partial_{333}w\|^2_{L^2(\omega_T^\ell)}+\|\md f|_{t=0}\|^2_{L^2(\widetilde{\Omega})}.\label{ineq:3-37}
\end{align}
It is easy to see that $\md^2\partial_{y_0}w$, $\md^2\partial_{y_2}w$, $\nabla_{\mby}\partial_{13}w$, $\partial_{111}w$, and $\partial_{333}w$ cover all third order derivatives of $w$. Thus by adding \eqref{ineq:3-18} for $i=0,2$, \eqref{ineq:3-32}, \eqref{ineq:3-35} and \eqref{ineq:3-37} together, we obtain
\begin{align}
    &\sum_{|\alpha|\le 3}\eta\|e^{-\eta t}\md^\alpha w\|^2_{L^2(\Omega_T)}+e^{-2\eta T}\|\md^\alpha w\|^2_{L^2(\Omega)}+\|e^{-\eta t}\md^\alpha w\|^2_{L^2(\omega^\ell_T)}\nonumber\\
     &\qquad\lesssim\sum_{|\alpha|\le 2}\left( \frac{1}{\eta}\|e^{-\eta t}L'(\md^\alpha w)\|^2_{L^2(\Omega_T)}+\|e^{-\eta t}\mathcal{B}\md^\alpha w\|^2_{L^2(\omega_T^\ell)}\right)\nonumber\\
&\qquad\quad+\|b_2\|^2_{L^\infty(\Omega_T)}\cdot\left(\|e^{-\eta t}\partial_{223}w\|^2_{L^2(\omega^\ell_T)}+\|e^{-\eta t}\partial_{233}w\|^2_{L^2(\omega_T^\ell)}\right)\nonumber\\
&\quad\qquad+\|b_3\|^2_{L^\infty(\omega_T^\ell)}\cdot\|e^{-\eta t}\partial_{333}w\|^2_{L^2(\omega_T^\ell)}+\|\md f|_{t=0}\|^2_{L^2(\widetilde{\Omega})}.\label{ineq:3-38}
\end{align}
As stated before, let the $\delta$ in \ref{h3} be properly small, such that the boundary integrals on $\omega^\ell_T$ on the right hand-side of \eqref{ineq:3-38} be absorbed by the left hand-side terms. Then we conclude the third order estimate as follows
\begin{align}
    &\sum_{|\alpha|\le 3}\eta\|e^{-\eta t}\md^\alpha w\|^2_{L^2(\Omega_T)}+e^{-2\eta T}\|\md^\alpha w\|^2_{L^2(\Omega)}+\|e^{-\eta t}\md^\alpha w\|^2_{L^2(\omega^\ell_T)}\nonumber\\
     &\qquad\lesssim\sum_{|\alpha|\le 2}\left( \frac{1}{\eta}\|e^{-\eta t}L'(\md^\alpha w)\|^2_{L^2(\Omega_T)}+\|e^{-\eta t}\mathcal{B}\md^\alpha w\|^2_{L^2(\omega_T^\ell)}\right)+\|\md f|_{t=0}\|^2_{L^2(\widetilde{\Omega})}.\label{ineq:3-39}
\end{align}

\vskip 0.2cm
\textbf{Step 4:}
In this step, we will establish the fourth order estimate in the dihedral-space domain.
Applying \eqref{ineq:3-38} to functions $\partial_{y_0}w$ and $\partial_{y_2}w$, respectively, one obtains
\begin{align}
&\sum_{i=0,2}\sum_{|\alpha|\le 3}\eta\|e^{-\eta t}\md^\alpha\partial_{y_i} w\|^2_{L^2(\Omega_T)}+e^{-2\eta T}\|\md^\alpha \partial_{y_i}w\|^2_{L^2(\Omega)}+\|e^{-\eta t}\md^\alpha \partial_{y_i}w\|^2_{L^2(\omega^\ell_T)}\nonumber\\
&\qquad\lesssim\sum_{|\alpha|\le 3}\left( \frac{1}{\eta}\|e^{-\eta t}L'(\md^\alpha w)\|^2_{L^2(\Omega_T)}+\|e^{-\eta t}\mathcal{B}\md^\alpha w\|^2_{L^2(\omega_T^\ell)}\right)+\|\md^2 f|_{t=0}\|^2_{L^2(\Omega)}.\label{ineq:3-40}
\end{align}
Before going on, we first prove the following lemma, which is crucial to the fourth order estimate.
\begin{lem}\label{Multiplier for almost normal boundary conditions}
Let $H_m$ be defined as in \eqref{defn of H0 and H1} and \eqref{defn of H2}. For any given $r_{ij}$ satisfying assumptions \ref{h1}-\ref{h4},
we can find a multiplier $\mcq^e=\sum_{i=0}^3Q^e_i$ such that
\begin{align}
&H_0(\mathrm{D}w;\mcq^e)\ge C_1\abs{\nabla_{\mby}w}^2-C_2\abs{\partial_{y_0}w}^2,\label{ineq:3-41}\\		
-&H_1(\mathrm{D}w;
\mcq^e)\ge C_1\abs{\partial_{y_3}w}^2-C_2(\abs{\partial_{y_0}w}^2+\abs{\partial_{y_1}w}^2+\abs{\partial_{y_2}w}^2),\label{ineq:3-42}
\end{align}
where $\nabla_{\mby}\defs(\partial_{y_1},\partial_{y_2},\partial_{y_3})$.
Moreover, if $w=0$ on $\{y_2=0\}$, then
\begin{align}\label{ineq:3-43}
-&H_3(\mathrm{D}w;\mcq^e)\ge C\abs{\partial_{y_3}w}^2\qquad\mbox{on }\{y_3=0\}.
\end{align}
\end{lem}
\begin{proof}
For the ease of presentation, in the proof of this lemma, denote $\md w$ by $(\xi_0,\xi_1,\xi_2,\xi_3)$.
Then at the background solution $u_b$, we have
\begin{align}	
-H_1(\md w;\mcq^e)=&(-2r_{10}Q^e_0+r_{00}Q^e_1)\xi_0^2+Q^e_1(-r_{11}\xi_1^2+r_{22}\xi_2^2+r_{33}\xi_3^2)\nonumber\\
&-2r_{11}Q^e_0\xi_0\xi_1-2r_{10}Q^e_2\xi_0\xi_2 +2r_{10}Q^e_3\xi_0\xi_3 -2r_{11}Q^e_2\xi_1\xi_2\nonumber\\
&-2r_{11}Q^e_3\xi_1\xi_3.
\end{align}
Choose $Q_1^e$ such that
\begin{equation}
	r_{33}Q^e_1>0,
\end{equation}
then \eqref{ineq:3-42} follows easily.
We know that $r_{33}=-\frac{c_+^2}{(q_--q_+)^2}<0$ at the background solution $u_b$. So we just need to let
	\begin{equation}
	Q^e_1<0.
	\end{equation}
	
Next, since at the background solution, we have
\begin{align}
    -H_3(\md w;\mcq^e)=-r_{33}Q^e_3\abs{\partial_{y_3}w}^2,
\end{align}
\eqref{ineq:3-43} follows if we let $Q^e_3>0$.	
Finally, for \eqref{ineq:3-41}, at the background solution $u_b$, we know
\begin{align}
    H_0(\md w;\mcq^e)=&2r_{10}\xi_1(Q^e_0\xi_0+Q^e_1\xi_1+Q^e_2\xi_2+Q^e_3\xi_3)\nonumber\\
    &-Q^e_0(r_{00}\xi_0^2+r_{11}\xi_1^2+r_{22}\xi_2^2+r_{33}\xi_3^2+2r_{10}\xi_0\xi_1)\nonumber\\
    =&(2r_{10}Q^e_1-r_{11}Q^e_0)\xi_1^2-Q^e_0(r_{22}\xi_2^2+r_{33}\xi_3^2)\nonumber\\
    &+2r_{10}\xi_1(Q^e_0\xi_0+Q^e_2\xi_2+Q^e_3\xi_3)-Q^e_0(r_{00}\xi_0^2+2r_{10}\xi_0\xi_1)\nonumber\\
    \ge&(2r_{10}Q^e_1-r_{11}Q^e_0-r_{10}|Q^e_2|-r_{10}Q^e_3)\xi_1^2+(-Q^e_0r_{22}-r_{10}|Q^e_2|)\xi_2^2\nonumber\\
    &+(-Q^e_0r_{33}-r_{10}Q^e_3)\xi_3^2-Q^e_0r_{00}\xi_0^2.
\end{align}
Because $r_{11}$, $r_{22}$ and $r_{33}$ are negative,
we may let \begin{equation}
	Q^e_0>\max\left\{\frac{-2r_{10}Q^e_1+r_{10}|Q^e_2|+r_{10}Q^e_3}{-r_{11}},\frac{r_{10}|Q^e_2|}{-r_{22}},\frac{r_{10}Q^e_3}{-r_{33}}\right\}>0,
\end{equation}
then \eqref{ineq:3-41} follows.
\end{proof}
With the help of this lemma, we are able to derive the first order estimate of $\partial_{113}w$. Firstly we notice that
$\partial_{113}w$ satisfies
\begin{align}
   L'(\partial_{113}w)=\partial_{113}f-[\partial_{113},L']w\quad \mbox{in}\quad \Omega_T&,\label{eq:satisfied by partial_113w}\\
   \partial_{113}w=0\quad\mbox{on}\quad \omega^r&,\\
   (\partial_{113}w,\partial_{y_0}(\partial_{113}w))=(0,0)\quad\mbox{on}\quad \Gamma_{in}&.
\end{align}
Next we need to deduce the boundary condition that $\partial_{113}w$ satisfies on the vertical boundary $\omega^\ell$.
Let
\begin{align}
    L_2&=2r_{12}\partial_{12}+r_{22}\partial_{22}+2r_{23}\partial_{32},\nonumber\\
    L_1&=r_{11}\partial_{11}+2r_{13}\partial_{13}+r_{33}\partial_{33},\nonumber\\
    L_0&=L'-L_1-L_2.\nonumber
\end{align}
So we have
\begin{align}
    &(r_{11}\partial_{y_1}\partial_{113}+2r_{13}\partial_{y_3}\partial_{113})w\nonumber\\
    &\qquad=L_1\partial_{13}w-r_{33}\partial_{1333}w\nonumber\\
    &\qquad=(L'-L_0-L_2)(\partial_{13}w)-r_{33}\partial_{1333}w.\label{eq:3-53}
\end{align}
For the terms on the right hand-side of above equality, only $r_{33}\partial_{1333}w$ has not been controlled yet. Indeed, $L'(\partial_{13}w)$ is what we need in the estimate and $L_0(\partial_{13}w)$ and $L_2(\partial_{13}w)$ have been controlled by \eqref{ineq:3-40}.
But by the boundary condition of $w$ on $\omega^\ell$, we notice that
\begin{align*}
    \partial_{1333}w=\frac{1}{b_1}(\mcb(\partial_{333}w)-b_0\partial_{y_0}\partial_{333}w-b_2\partial_{y_2}\partial_{333}w-b_3\partial_{y_3}\partial_{333}w).
\end{align*}
Therefore we deduce that
\begin{align}
   \abs{\partial_{1333}w}\lesssim \abs{\mcb(\partial_{333}w)}+\abs{\md^3\partial_{y_0}w}+\abs{\md^3\partial_{y_2}w}+\|b_3\|_{L^\infty}\cdot\abs{\partial_{3333}w}. \label{eq:3-54}
\end{align}
On the right hand-side of \eqref{eq:3-54}, the first term is what we need, the second and the third terms are controlled by \eqref{ineq:3-40}. For the last term, by \ref{h2} and \ref{h3}, we know that $\|b_3\|_{L^\infty}$ is small, provided the $\delta$ in \ref{h3} is appropriately small. Hence it can be absorbed by the left hand-side of the estimate coming later, which will cover all fourth order derivatives. On the boundary $\omega^\ell$, combining \eqref{eq:3-53} and \eqref{eq:3-54}, we obtain
\begin{align}
    \abs{\partial_{1113}w}\lesssim&\abs{L'(\partial_{13}w)}+\abs{\mcb(\partial_{333}w)}+\abs{\md^3\partial_{y_0}w}+\abs{\md^3\partial_{y_2}w}\nonumber\\
    &+\|b_3\|_{L^\infty}\cdot\abs{\partial_{3333}w}+\|r_{13}\|_{L^\infty}\cdot\abs{\partial_{1133}w}.\label{ineq:3-55}
\end{align}
Multiplying $2e^{-2\eta t}\mcq^e(\partial_{113}w)$, where $\mcq^e$ is given in lemma \ref{Multiplier for almost normal boundary conditions}, on both sides of \eqref{eq:satisfied by partial_113w}, integration by parts over $\Omega_T$ and by the use of Cauchy inequality, one has

\begin{align}
    &2\eta\int_{\Omega_T}e^{-2\eta t}H_0(\md\partial_{113}w;\mcq^e)\mathrm{d}\mby\mathrm{d}y_0+e^{-2\eta T}\int_{\Omega}e^{-2\eta t}H_0(\md\partial_{113}w; \mcq^e)\mathrm{d}\mby|_{t=T}\nonumber\\
    &\quad-\int_{\omega^\ell_T}e^{-2\eta t}H_1(\md\partial_{113}w;\mcq^e)\mathrm{d}\mby\mathrm{d}y_0-\int_{\omega^r_T}e^{-2\eta t}H_3(\md\partial_{113}w;\mcq^e)\mathrm{d}\mby\mathrm{d}y_0\nonumber\\
    &\qquad\lesssim \frac{1}{\varepsilon_1 \eta}\|e^{-\eta t}L'(\partial_{113}w)\|_{L^2(\Omega_T)}^2+(\varepsilon_1\eta+1)\|\md\partial_{113}w\|^2_{L^2(\Omega_T)}\nonumber\\
    &\quad\qquad+\int_{\Omega}H_0(\md\partial_{113}w;\mcq^e)\mathrm{d}\mby|_{t=0}.
\end{align}
By \eqref{ineq:3-41},\eqref{ineq:3-42} and \eqref{ineq:3-43} together with the fact
 \[H_0(\md\partial_{113}w;\mcq^e)\lesssim \abs{\md\partial_{113}w }^2,
 \]
we deduce that
\begin{align}
    &\eta \|e^{-\eta t}\nabla_{\mby}\partial_{113}w\|^2_{L^2(\Omega_T)}+e^{-2\eta T}\|\nabla_{\mby}\partial_{113}w\|^2_{L^2(\Omega)}+\|e^{-\eta t}
    \partial_{y_3}\partial_{113}w\|^2_{L^2(\omega^\ell_T)}\nonumber\\
    &\quad+\|e^{-\eta t}\partial_{y_3}\partial_{113}w\|^2_{L^2(\omega^r_T)}\nonumber\\
    &\quad\qquad\lesssim \frac{1}{\varepsilon_1 \eta}\|e^{-\eta t}L'(\partial_{113}w)\|_{L^2(\Omega_T)}^2+(\varepsilon_1\eta+1)\|\md\partial_{113}w\|^2_{L^2(\Omega_T)}\nonumber\\
    &\qquad\qquad+\eta\|e^{-\eta t}\partial_{y_0}\partial_{113}w\|^2_{L^2(\Omega_T)}+e^{-2\eta T}\|\partial_{y_0}\partial_{113}w\|^2_{L^2(\Omega)}\nonumber\\
    &\qquad\qquad+\sum_{i=0}^2\|e^{-\eta t}\partial_{y_i}\partial_{113}w\|^2_{L^2(\omega^\ell_T)}+\|\md^2 f|_{t=0}\|^2_{L^2(\Omega)}.
\end{align}
Recalling \eqref{ineq:3-40} and \eqref{ineq:3-55}, we obtain
\begin{align}
      &\eta \|e^{-\eta t}\md\partial_{113}w\|^2_{L^2(\Omega_T)}+e^{-2\eta T}\|\md\partial_{113}w\|^2_{L^2(\Omega)}+\|e^{-\eta t}
    \md\partial_{113}w\|^2_{L^2(\omega^\ell_T)}\nonumber\\
    &\quad+\|e^{-\eta t}\partial_{y_3}\partial_{113}w\|^2_{L^2(\omega^r_T)}\nonumber\\
    &\qquad\lesssim \frac{1}{\varepsilon_1 \eta}\|e^{-\eta t}L'(\partial_{113}w)\|_{L^2(\Omega_T)}^2+(\varepsilon_1\eta+1)\|\md\partial_{113}w\|^2_{L^2(\Omega_T)}\nonumber\\
    &\qquad\quad+\sum_{|\alpha|\le 3}\left( \frac{1}{\eta}\|e^{-\eta t}L'(\md^\alpha w)\|^2_{L^2(\Omega_T)}+\|e^{-\eta t}\mathcal{B}\md^\alpha w\|^2_{L^2(\omega_T^\ell)}\right)\nonumber\\
    &\qquad\quad+\|b_3\|^2_{L^\infty}\cdot\|e^{-\eta t}\partial_{3333}w\|^2_{L^2(\omega^\ell_T)}
    +\|r_{13}\|^2_{L^\infty}\cdot\|e^{-\eta t}\partial_{1133}w\|^2_{L^2(\omega^{\ell}_T)}\nonumber\\
   &\qquad\quad +\|e^{-\eta t}L'(\partial_{13}w)\|^2_{L^2(\omega^\ell_T)}+\|\md^2 f|_{t=0}\|^2_{L^2(\Omega)}.\label{ineq:3-58}
\end{align}
With \eqref{ineq:3-40} and \eqref{ineq:3-58} in hand, we can deduce the estimate of the left derivatives of fourth order, i.e., $\partial_{1111}w$, $\partial_{3333}w$ and $\partial_{1333}w$. It is clear that
\begin{align}
   \partial_{1111}w=\frac{1}{r_{11}}\left((L'-L_0-L_2)\partial_{11}w-2r_{13}\partial_{1113}w-r_{33}\partial_{1133}w\right).
\end{align}
Hence one has
\begin{align}
    &\eta \|e^{-\eta t}\partial_{1111}w\|^2_{L^2(\Omega_T)}+e^{-2\eta T}\|\partial_{1111}w\|^2_{L^2(\Omega)}+\|e^{-\eta t}
    \partial_{1111}w\|^2_{L^2(\omega^\ell_T)}\nonumber\\
    &\quad\lesssim \sum_{|\alpha|\le 3}\left(\eta \|e^{-\eta t}\md^\alpha\partial_{y_0}w\|^2_{L^2(\Omega_T)}+e^{-2\eta T}\|\md^\alpha \partial_{y_0}w\|^2_{L^2(\Omega)}+\|e^{-\eta t}
    \md^\alpha \partial_{y_0}w\|^2_{L^2(\omega^\ell_T)}\right)\nonumber\\
    &\qquad+\sum_{|\alpha|\le 3}\left(\eta \|e^{-\eta t}\md^\alpha\partial_{y_2}w\|^2_{L^2(\Omega_T)}+e^{-2\eta T}\|\md^\alpha \partial_{y_2}w\|^2_{L^2(\Omega)}+\|e^{-\eta t}
    \md^\alpha \partial_{y_2}w\|^2_{L^2(\omega^\ell_T)}\right)\nonumber\\
    &\qquad+\eta \|e^{-\eta t}\md\partial_{113}w\|^2_{L^2(\Omega_T)}+e^{-2\eta T}\|\md\partial_{113}w\|^2_{L^2(\Omega)}+\|e^{-\eta t}
    \md\partial_{113}w\|^2_{L^2(\omega^\ell_T)}\nonumber\\
    &\qquad+\eta \|e^{-\eta t}L'(\partial_{11}w)\|^2_{L^2(\Omega_T)}+e^{-2\eta T}\|L'(\partial_{11}w)\|^2_{L^2(\Omega)}+\|e^{-\eta t}
    L'(\partial_{11}w)\|^2_{L^2(\omega^\ell_T)}.\label{ineq:3-60}
\end{align}
For $\partial_{3333}w$ and $\partial_{1333}w$, it is easy to check that
\begin{align}
    &\partial_{1333}w=\frac{1}{r_{33}}\left((L'-L_0-L_2)\partial_{13}w-r_{11}\partial_{1113}w-2r_{13}\partial_{1133}w\right),\nonumber\\
   & \partial_{3333}w=\frac{1}{r_{33}}\left((L'-L_0-L_2)\partial_{33}w-2r_{13}\partial_{1333}w-r_{11}\partial_{1133}w\right).\nonumber
\end{align}
Thus both $\partial_{3333}w$ and $\partial_{1333}w$ can be controlled by estimated terms. In fact, we have
\begin{align}
   &\sum_{i=1,3}\left( \eta \|e^{-\eta t}\partial_{i333}w\|^2_{L^2(\Omega_T)}+e^{-2\eta T}\|\partial_{i333}w\|^2_{L^2(\Omega)}+\|e^{-\eta t}
    \partial_{i333}w\|^2_{L^2(\omega^\ell_T)}\right)\nonumber\\
    &\quad\lesssim \frac{1}{\varepsilon_1 \eta}\|e^{-\eta t}L'(\partial_{113}w)\|_{L^2(\Omega_T)}^2+(\varepsilon_1\eta+1)\|\md\partial_{113}w\|^2_{L^2(\Omega_T)}+\|\md^2 f|_{t=0}\|^2_{L^2(\Omega)}\nonumber\\
    &\qquad+\sum_{|\alpha|\le 3}\left( \frac{1}{\eta}\|e^{-\eta t}L'(\md^\alpha w)\|^2_{L^2(\Omega_T)}+\|e^{-\eta t}\mathcal{B}\md^\alpha w\|^2_{L^2(\omega_T^\ell)}\right)+\|e^{-\eta t}L'(\partial_{13}w)\|^2_{L^2(\omega^\ell_T)}\nonumber\\
    &\qquad+\|b_3\|^2_{L^\infty}\cdot\|e^{-\eta t}\partial_{3333}w\|^2_{L^2(\omega^\ell_T)}
    +\|r_{13}\|^2_{L^\infty}\cdot\|e^{-\eta t}\partial_{1133}w\|^2_{L^2(\omega^{\ell})}\nonumber\\
    &\qquad+\eta \|e^{-\eta t}\md\partial_{113}w\|^2_{L^2(\Omega_T)}+e^{-2\eta T}\|\md\partial_{113}w\|^2_{L^2(\Omega)}+\|e^{-\eta t}
    \md\partial_{113}w\|^2_{L^2(\omega^\ell_T)}\nonumber\\
    &\qquad+\sum_{i=1,3}\left(\eta \|e^{-\eta t}L'(\partial_{1i}w)\|^2_{L^2(\Omega_T)}+e^{-2\eta T}\|L'(\partial_{1i}w)\|^2_{L^2(\Omega)}+\|e^{-\eta t}
    L'(\partial_{1i}w)\|^2_{L^2(\omega^\ell_T)}\right)\nonumber\\
    &\qquad+\eta \|e^{-\eta t}L'(\partial_{33}w)\|^2_{L^2(\Omega_T)}+e^{-2\eta T}\|L'(\partial_{33}w)\|^2_{L^2(\Omega)}+\|e^{-\eta t}
    L'(\partial_{33}w)\|^2_{L^2(\omega^\ell_T)}.\label{ineq:3-61}
\end{align}
It is not difficult to see that $\md^3\partial_{y_0}w$, $\md^3\partial_{y_2}w$, $\md\partial_{113}w$, $\partial_{1111}w$, $\partial_{1333}w$ and $\partial_{3333}w$ cover all derivatives of fourth order of $w$. We add \eqref{ineq:3-40}, \eqref{ineq:3-58}, \eqref{ineq:3-60} and \eqref{ineq:3-61} up, let the $\varepsilon_1$ in \eqref{ineq:3-58} and $\delta$ be properly small and let $\eta$ be properly large, such that the terms with smallness be absorbed by the corresponding left hand-side terms. Then we obtain
\begin{align}
   &\sum_{|\alpha|\le 4} \eta \|e^{-\eta t}\md^\alpha w\|^2_{L^2(\Omega_T)}+e^{-2\eta T}\|\md^\alpha w\|^2_{L^2(\Omega)}+\|e^{-\eta t}
    \md^\alpha w\|^2_{L^2(\omega^\ell_T)}\nonumber\\
    &\quad\lesssim\sum_{|\alpha|\le 3}\left( \frac{1}{\eta}\|e^{-\eta t}L'(\md^\alpha w)\|^2_{L^2(\Omega_T)}+\|e^{-\eta t}\mathcal{B}\md^\alpha w\|^2_{L^2(\omega_T^\ell)}\right)+\|\md^2 f|_{t=0}\|^2_{L^2(\Omega)}\nonumber\\
    &\qquad+\eta \|e^{-\eta t}L'(\partial_{11}w)\|^2_{L^2(\Omega_T)}+e^{-2\eta T}\|L'(\partial_{11}w)\|^2_{L^2(\Omega)}+\|e^{-\eta t}
    L'(\partial_{11}w)\|^2_{L^2(\omega^\ell_T)}\nonumber\\
    &\qquad+\eta \|e^{-\eta t}L'(\partial_{13}w)\|^2_{L^2(\Omega_T)}+e^{-2\eta T}\|L'(\partial_{13}w)\|^2_{L^2(\Omega)}+\|e^{-\eta t}
    L'(\partial_{13}w)\|^2_{L^2(\omega^\ell_T)}\nonumber\\
    &\qquad+\eta \|e^{-\eta t}L'(\partial_{33}w)\|^2_{L^2(\Omega_T)}+e^{-2\eta T}\|L'(\partial_{33}w)\|^2_{L^2(\Omega)}+\|e^{-\eta t}
    L'(\partial_{33}w)\|^2_{L^2(\omega^\ell_T)}.\label{ineq:3-62}
\end{align}
Exploiting integration by parts to $\int_{\Omega_T}e^{-2\eta t}\xi^2\mathrm{d}\mby\mathrm{d}y_0$ with respect to $t$, we can derive following inequality
\begin{align}
    \eta\int_{\Omega_T}{\!\!e^{-2\eta t}\abs{\xi}^2}\mathrm{d}\mby\mathrm{d}y_0+{e^{-2\eta T}\int_{\Omega}\abs{\xi}^2}\mathrm{d}\mby
   \le \frac{1}{\eta}\int_{\Omega_T}{\!\!e^{-2\eta t}\abs{\partial_{t}\xi}^2}\mathrm{d}\mby\mathrm{d}y_0+\int_{\Omega}{\abs{\xi(0)}^2}\mathrm{d}\mby.
\end{align}
Hence we obtain
\begin{align}
    &\eta\|e^{-\eta t}L'(\partial_{ij}w)\|^2_{L^2(\Omega_T)}+e^{-2\eta T}\|L'(\partial_{ij}w)\|^2_{L^2(\Omega)}\nonumber\\
    &\le \frac{1}{\eta}\|e^{-\eta t}\partial_tL'(\partial_{ij}w)\|^2_{L^2(\Omega_T)}+\|L'(\partial_{ij}w)|_{t=0}\|^2_{L^2(\Omega)}\nonumber\\
    &\lesssim \frac{1}{\eta}\left(\sum_{|\alpha|\le 4}\|e^{-\eta t}\md^\alpha w\|^2_{L^2(\Omega_T)}+\|e^{-\eta t}L'(\partial_t\partial_{ij}w)\|^2_{L^2(\Omega_T)}\right)+\|\md^2 f|_{t=0}\|^2_{L^2(\Omega)}.\label{ineq:3-64}
\end{align}
By Gauss theorem, we also have
\begin{align}
    &\|e^{-\eta t}L'(\partial_{ij}w)\|^2_{L^2(\omega^\ell_T)}\nonumber\\
    &\quad=\int_{\Omega_T}-\partial_{y_1}(e^{-2\eta t}\abs{L'(\partial_{ij}w)}^2)\mathrm{d}\mby\mathrm{d}y_0\nonumber\\
    &\quad\le \int_{\Omega_T}e^{-2\eta t}2|L'(\partial_{ij}w)|\cdot|\partial_{y_1}L'(\partial_{ij}w)|\mathrm{d}\mby\mathrm{d}y_0\nonumber\\
    &\quad\le \int_{\Omega_T}e^{-2\eta t}\left(\frac{1}{\varepsilon\eta}\abs{\partial_{y_1}L'(\partial_{ij}w)}^2+\varepsilon\eta\abs{L'(\partial_{ij}w)}^2\right)\mathrm{d}\mby\mathrm{d}y_0\nonumber\\
    &\quad\le\int_{\Omega_T}e^{-2\eta t}\left(\frac{1}{\varepsilon\eta}\left(2\abs{[\partial_{y_1},L']\partial_{ij}w}^2+2\abs{L'(\partial_{y_1}\partial_{ij}w)}^2\right)+\varepsilon\eta\abs{L'(\partial_{ij}w)}^2\right)\mathrm{d}\mby\mathrm{d}y_0\nonumber\\
    &\quad\lesssim \frac{1}{\varepsilon\eta}\sum_{|\alpha|\le 3}\left(\|e^{-\eta t}\md\md^\alpha w\|^2_{L^2(\Omega_T)}+\|e^{-\eta t}L'(\md^\alpha w)\|^2_{L^2(\Omega_T)}\right)\nonumber\\
    &\qquad+\varepsilon\eta\sum_{|\alpha|\le 4}\|e^{-\eta t}\md^\alpha w\|^2_{L^2(\Omega_T)}.\label{ineq:3-65}
\end{align}
Substitute \eqref{ineq:3-64} and \eqref{ineq:3-65} into \eqref{ineq:3-62}, let the $\varepsilon$ in \eqref{ineq:3-65} be properly small and then let $\eta$ be appropriately large, we conclude the fourth order estimate as follows
\begin{align}
   &\sum_{|\alpha|\le 4} \eta \|e^{-\eta t}\md^\alpha w\|^2_{L^2(\Omega_T)}+e^{-2\eta T}\|\md^\alpha w\|^2_{L^2(\Omega)}+\|e^{-\eta t}
    \md^\alpha w\|^2_{L^2(\omega^\ell_T)}\nonumber\\
    &\quad\lesssim\sum_{|\alpha|\le 3}\left( \frac{1}{\eta}\|e^{-\eta t}L'(\md^\alpha w)\|^2_{L^2(\Omega_T)}+\|e^{-\eta t}\mathcal{B}\md^\alpha w\|^2_{L^2(\omega_T^\ell)}\right)+\|\md^2 f|_{t=0}\|^2_{L^2(\Omega)}.\label{fourth order estimate}
\end{align}
\vskip 0.2cm
\textbf{Step 5:} Higher order estimate. In this step we will prove higher order estimate by the induction method. Assume the estimate of $2k$-th order has been established, i.e., we have
 \begin{align}
   &\sum_{|\alpha|\le 2k} \eta \|e^{-\eta t}\md^\alpha w\|^2_{L^2(\Omega_T)}+e^{-2\eta T}\|\md^\alpha w\|^2_{L^2(\Omega)}+\|e^{-\eta t}
    \md^\alpha w\|^2_{L^2(\omega^\ell_T)}\nonumber\\
    &\quad\lesssim\!\!\sum_{|\alpha|\le 2k-1}\!\!\left( \frac{1}{\eta}\|e^{-\eta t}L'(\md^\alpha w)\|^2_{L^2(\Omega_T)}+\|e^{-\eta t}\mathcal{B}\md^\alpha w\|^2_{L^2(\omega_T^\ell)}\right)+\|\md^{2k-2}f|_{t=0}\|^2_{L^2(\Omega)}.\label{ineq:induction assumption}
\end{align}
Then one proceeds to establish the estimate of $(2k+1)$-th order and $(2k+2)$-th order on the basis of the estimate of $(2k)$-th order. In what follows, we deal with the estimate of $(2k)$-th order first.
Since both $\partial_{y_0}$ and $\partial_{y_2}$ are tangential to the boundaries $\Gamma_s$ and $\Gamma_w$, the application of \eqref{ineq:induction assumption} to $\partial_{y_0}w$ and $\partial_{y_2}w$ yields
\begin{align}
&\sum_{i=0,2}\sum_{|\alpha|\le 2k}\eta\|e^{-\eta t}\md^\alpha\partial_{y_i} w\|^2_{L^2(\Omega_T)}+e^{-2\eta T}\|\md^\alpha \partial_{y_i}w\|^2_{L^2(\Omega)}+\|e^{-\eta t}\md^\alpha \partial_{y_i}w\|^2_{L^2(\omega^\ell_T)}\nonumber\\
&\quad\lesssim\sum_{|\alpha|\le 2k}\left( \frac{1}{\eta}\|e^{-\eta t}L'(\md^\alpha w)\|^2_{L^2(\Omega_T)}+\|e^{-\eta t}\mathcal{B}\md^\alpha w\|^2_{L^2(\omega_T^\ell)}\right)\nonumber\\
&\quad\quad+\|\md^{2k-2}f|_{t=0}\|^2_{L^2(\Omega_T)}.\label{ineq:3-68}
\end{align}
Analogous to the estimate of third order, one tries to derive the first order estimate of $\partial^{2k-1}_{y_1}\partial_{y_3}w$.
It is clear that
\begin{align}
   L'(\partial^{2k-1}_{y_1}\partial_{y_3}w)=\partial^{2k-1}_{y_1}\partial_{y_3}f-[\partial^{2k-1}_{y_1}\partial_{y_3},L']w\quad \mbox{in}\quad \Omega_T,&\label{eq:3-69}\\
   \partial^{2k-1}_{y_1}\partial_{y_3}w=0\quad\mbox{on}\quad \omega^r,&\\
   (\partial^{2k-1}_{y_1}\partial_{y_3}w,\partial_{y_0}(\partial^{2k-1}_{y_1}\partial_{y_3}w))=(0,0)\quad\mbox{on}\quad \Gamma_{in}.&
\end{align}
Next we need to deduce the boundary condition on $\omega^\ell$ for $\partial^{2k-1}_{y_1}\partial_{y_3}w$.
 It is not difficult to check that
\begin{align}
    \partial^{2k-1}_{y_1}\partial_{y_3}w=(L'-L_0-L_2-2r_{13}\partial_{13}-r_{33}\partial_{33})\partial_{y_1}^{2k-3}\partial_{y_3}w.\label{eq:3-73}
\end{align}
It follows from \eqref{eq:3-73} that
\begin{align}
    \partial^{2k-1}_{y_1}\partial_{y_3}w=\Lambda_1\quad\mbox{on }\quad\Gamma_s.
\end{align}
Moreover, from \eqref{eq:3-73}, one has
\begin{align}
    \abs{\partial_{y_0}\Lambda_1}+&\abs{\partial_{y_2}\Lambda_1}+\abs{\partial_{y_3}\Lambda_1}\nonumber\\
    &\lesssim \sum_{|\alpha|\leq2k}(|\md^{\alpha}\partial_{y_0}w|+|\md^{\alpha}w|+|\md^\alpha \partial_{y_2}w|)+\delta|\md^{2k+1}w|\nonumber\\
    &\quad+\sum_{|\alpha|\le 2k-1}|L'(\md^\alpha w)|+\abs{\partial_{y_1}^{2k-3}\partial^4_{y_3}w}\label{ineq:3-74},
\end{align}
where the $\delta$ before $|\md^{2k+1}w|$ comes from the smallness of $r_{13}$ due to \ref{h1} and \ref{h3}.
So we have to estimate $\abs{\partial_{y_1}^{2k-3}\partial^4_{y_3}w}$.
We already know $\mathcal{B}w=g$ on $\omega^\ell$, then it is easy to verify that
\begin{align}
    \partial_{y_1}\partial^{2k}_{y_3}w=\frac{1}{b_1}(\mathcal{B}\partial^{2k}_{y_3}w-(b_0\partial_{y_0}+b_2\partial_{y_2}+b_3\partial_{y_3}+b)\partial^{2k}_{y_3}w).\label{eq:3-75}
\end{align}
Remembering that $\partial_{y_0}\partial^{2k}_{y_3}w$,  $\partial_{y_2}\partial^{2k}_{y_3}w$ and $\partial_{y_3}^{2k}w$ have been controlled by \eqref{ineq:3-68} and $\|b_3\|_{L^\infty}$ is close to zero, so $\partial_{y_1}\partial_{y_3}^{2k}w$ can be regarded as known function on $\omega^\ell$. Furthermore, from \eqref{eq:3-75}, we have
\begin{align}
    \abs{\partial_{y_1}\partial_{y_3}^{2k}w}\lesssim \sum_{|\alpha|\leq2k}(|\md^{\alpha}\partial_{y_0}w|+|\md^{\alpha}w|+|\md^\alpha\partial_{y_2}w|+|\mathcal{B}(\md^\alpha w)|)+\delta|\md^{2k+1}w|\label{ineq:3-76}.
\end{align}
It is easy to check that
\begin{align}
    \partial_{y_1}^{2k-2j-1}\partial_{y_3}^{2j+2}w=\frac{1}{r_{33}}\left(L'-L_0-L_2-2r_{13}\partial_{13}-r_{11}\partial_{11}\right)\partial_{y_1}^{2k-2j-1}\partial_{y_3}^{2j}w.\label{eq:3-77}
\end{align}
For the ease of presentation, let
\begin{align}
   \beta_j\defs&\partial_{y_1}^{2k-2j+1}\partial_{y_3}^{2j}w,\label{eq:3-78}\\
   A_j\defs&\left(L'-L_0-L_2-2r_{13}\partial_{13}\right)\partial_{y_1}^{2k-2j-1}\partial_{y_3}^{2j}w \label{eq:3-79}
\end{align}
 for $j=2,3,\cdots,k$.
Then it is clear that
\begin{align}
    \abs{A_j}\lesssim \sum_{|\alpha|\leq2k}(|\md^{\alpha}\partial_{y_0}w|+|\md^{\alpha}w|+\md^\alpha\partial_{y_2}w|)+\delta|\md^{2k+1}w|+\sum_{|\alpha|\le 2k-1}|L'(\md^\alpha w)|.\label{ineq:3-80}
\end{align}
From \eqref{eq:3-77}-\eqref{eq:3-79}, we obtain
\begin{align}
    \beta_{j+1}=\frac{1}{r_{33}}(A_j-r_{11}\beta_j),
\end{align}
which implies
\begin{align}
    \beta_j=\frac{A_j-r_{33}\beta_{j+1}}{r_{11}}.\label{eq:3-82}
\end{align}
Gathering \eqref{ineq:3-76}, \eqref{eq:3-79}, and \eqref{eq:3-82}, one derives a sequence $\{\beta_j\}_{j=2}^k$ that satisfies
\begin{equation}\label{ineq:3-83}
    \left\{\!\!
    \begin{array}{ll}
    \beta_k\lesssim \sum_{|\alpha|\leq2k}(|\md^{\alpha}\partial_{y_0}w|+|\md^{\alpha}w|+|\mathcal{B}(\md^\alpha w)|+|\md^\alpha\partial_{y_2}w|)+\delta|\md^{2k+1}w|,\\
    \beta_j=\dfrac{A_j-r_{33}\beta_{j+1}}{r_{11}},\quad j=k-1,k-2,\cdots,3,2.\\
    \abs{A_j}\lesssim \sum_{|\alpha|\leq2k}(|\md^{\alpha}\partial_{y_0}w|+|\md^{\alpha}w|+\delta|\md^{2k+1}w|)+\sum_{|\alpha|\le 2k-1}|L'(\md^\alpha w)|.
\end{array}
    \right.
\end{equation}
For $j=2,3,\cdots,k$, we claim that $\beta_j$ satisfies
\begin{align}
    \abs{\beta_j}&\lesssim \sum_{|\alpha|\leq2k}(|\md^{\alpha}\partial_{y_0}w|+|\md^{\alpha}w|+|\mathcal{B}(\md^\alpha w)|+|\md^\alpha\partial_{y_2}w|)\nonumber\\
    &\quad+\delta|\md^{2k+1}w|+\sum_{|\alpha|\le 2k-1}\abs{L'(\md^\alpha w)},\label{ineq:3-84}
\end{align}
and hence so does $\partial_{y_1}^{2k-3}\partial^{4}_{y_3}w\defs\beta_2$.
Indeed, from \eqref{ineq:3-76} it is clear to see that $\beta_k$ satisfies \eqref{ineq:3-84}. Assume
$\beta_\ell$ satisfies \eqref{ineq:3-84} for some $\ell \le k$, then by \eqref{ineq:3-80} and $\eqref{ineq:3-83}_2$, we obtain
\begin{align}
    \abs{\beta_{\ell-1}}&\lesssim \abs{A_{\ell-1}}+\abs{\beta_{\ell}}\nonumber\\
    &\lesssim \sum_{|\alpha|\leq2k}(|\md^{\alpha}\partial_{y_0}w|+|\md^{\alpha}w|+|\mathcal{B}(\md^\alpha w)|+|\md^\alpha\partial_{y_2}w|)\nonumber\\
    &\quad+\delta|\md^{2k+1}w|+\!\!\sum_{|\alpha|\le 2k-1}\abs{L'(\md^\alpha w)},\nonumber
\end{align}
which implies $\beta_{\ell-1}$ also satisfies \eqref{ineq:3-84}. Hence our claim holds.
Therefore one can deduce from \eqref{ineq:3-74} that
\begin{align}
&\abs{(\partial_{y_0}\Lambda_1}+\abs{\partial_{y_2}\Lambda_1}+\abs{\partial_{y_3}\Lambda_1)}\nonumber\\
&\quad\lesssim \sum_{|\alpha|\leq2k}(|\md^{\alpha}\partial_{y_0}w|+|\md^{\alpha}w|+|\mathcal{B}(\md^\alpha w)|+\delta|\md^{2k+1}w|\nonumber\\
&\qquad+\sum_{|\alpha|\le 2k-1}\abs{L'(\md^\alpha w)}.\label{ineq:3-85}
\end{align}
With the help of lemma \ref{Multiplier for two Dirichlet boundary conditions} and \eqref{ineq:3-85}, we are able to obtain the first order estimate of $\partial_{y_1}^{2k-1}\partial_{y_3}w$.
Multiplying $2e^{-2\eta t}\mcq^d(\partial^{2k-1}_{y_1}\partial_{y_3}w)$ on both sides of \eqref{eq:3-69}, integrating by parts over $\Omega_T$ and then apply \eqref{ineq:3-19}-\eqref{ineq:3-21} in lemma \ref{Multiplier for two Dirichlet boundary conditions}, one deduces that
\begin{align}
   &\eta \|e^{-\eta t}\nabla_{\mby}\partial_{y_1}^{2k-1}\partial_{y_3}w\|^2_{L^2(\Omega_T)}+e^{-2\eta T}\|\nabla_{\mby}\partial_{y_1}^{2k-1}\partial_{y_3}w\|^2_{L^2(\Omega)}+\|e^{-\eta t}
    \partial_{y_1}^{2k}\partial_{y_3}w\|^2_{L^2(\omega^\ell_T)}\nonumber\\
    &\qquad\quad+\|e^{-\eta t}\partial_{y_1}^{2k-1}\partial^2_{y_3}w\|^2_{L^2(\omega^r_T)}\nonumber\\
    &\qquad\lesssim \frac{1}{\varepsilon\eta}\|e^{-\eta t}L'(\partial_{y_1}^{2k-1}\partial_{y_3}w)\|_{L^2(\Omega_T)}^2+(1+\varepsilon\eta)\|e^{-\eta t}\md\partial_{y_1}^{2k-1}\partial_{y_3}w\|^2_{L^2(\Omega_T)}\nonumber\\
    &\qquad\quad+\|e^{-\eta t}\partial_{y_0}\partial_{y_1}^{2k-1}\partial_{y_3}w\|^2_{L^2(\Omega_T)}+\sum_{i\neq 1}\|e^{-\eta t}\partial_{y_i}\partial_{y_1}^{2k-1}\partial_{y_3}w\|^2_{L^2(\omega^\ell_T)}.\label{ineq:3-86}
\end{align}
In light of \eqref{ineq:3-68}, \eqref{ineq:3-85} and \eqref{ineq:3-86}, we obtain
\begin{align}
      &\eta \|e^{-\eta t}\md\partial_{y_1}^{2k-1}\partial_{y_3}w\|^2_{L^2(\Omega_T)}+e^{-2\eta T}\|\md\partial_{y_1}^{2k-1}\partial_{y_3}w\|^2_{L^2(\Omega)}+\|e^{-\eta t}
    \md\partial_{y_1}^{2k-1}\partial_{y_3}w\|^2_{L^2(\omega^\ell_T)}\nonumber\\
    &\quad\lesssim \frac{1}{\varepsilon\eta}\|e^{-\eta t}L'(\partial_{y_1}^{2k-1}\partial_{y_3}w)\|_{L^2(\Omega_T)}^2+(1+\varepsilon\eta)\|e^{-\eta t}\md\partial_{y_1}^{2k-1}\partial_{y_3}w\|^2_{L^2(\Omega_T)}\nonumber\\
    &\qquad+\sum_{|\alpha|\le 2k}\left( \frac{1}{\eta}\|e^{-\eta t}L'(\md^\alpha w)\|^2_{L^2(\Omega_T)}+\|e^{-\eta t}\mathcal{B}\md^\alpha w\|^2_{L^2(\omega_T^\ell)}\right)+\|\md^{2k-2}f|_{t=0}\|^2_{L^2(\Omega_T)}\nonumber\\
    &\qquad+\delta\|e^{-\eta t}\md^{2k+1}w\|^2_{L^2(\omega^\ell_T)}+\sum_{|\alpha|\le 2k-1}\|e^{-\eta t}L'(\md^\alpha w))\|^2_{L^2(\omega^\ell_T)}.\label{ineq:3-87}
\end{align}
Now one turns to the estimate of derivatives other than $\md^{2k}\partial_{y_0}w$, $\md^{2k}\partial_{y_2}w$ and $\md \partial^{2k-1}_{y_1}\partial_{y_3}w$, i.e., the estimate of $\partial^{2k+1}_{y_1}w$ and the estimate of derivatives in the form of $\partial^{2k-j+1}_{y_1}\partial_{y_3}^jw$ with $3\le j\le 2k+1$.
For $\partial^{2k+1}_{y_1}w$, it is easy to check that
\begin{align}
    \partial^{2k+1}_{y_1}w=\frac{1}{r_{11}}(L'-L_0-L_2-2r_{13}\partial_{13}-r_{33}\partial_{33})\partial^{2k-1}_{y_1}w.
\end{align}
Hence $\partial_{y_1}^{2k+1}w$ can be controlled by estimated terms. In fact, we have
\begin{align}
   & \eta\|e^{-\eta t}\partial^{2k+1}_{y_1}w\|^2_{L^2(\Omega_T)}+e^{-2\eta T}\|\partial^{2k+1}_{y_1}w\|^2_{L^2(\Omega)}+\|e^{-\eta t}\partial^{2k+1}_{y_1}w\|^2_{L^2(\omega^\ell_T)}\nonumber\\
   &\qquad\lesssim \eta\|e^{-\eta t}\md^{2k}\partial_{y_0}w\|^2_{L^2(\Omega_T)}+e^{-2\eta T}\|\md^{2k}\partial_{y_0}w\|^2_{L^2(\Omega)}+\|e^{-\eta t}\md^{2k}\partial_{y_0}w\|^2_{L^2(\omega^\ell_T)}\nonumber\\
   &\qquad\quad+\eta\|e^{-\eta t}\md^{2k}\partial_{y_2}w\|^2_{L^2(\Omega_T)}+e^{-2\eta T}\|\md^{2k}\partial_{y_2}w\|^2_{L^2(\Omega)}+\|e^{-\eta t}\md^{2k}\partial_{y_2}w\|^2_{L^2(\omega^\ell_T)}\nonumber\\
   &\qquad\quad+\eta\|e^{-\eta t}\md \partial^{2k-1}_{y_1}\partial_{y_3}w\|^2_{L^2(\Omega_T)}+e^{-2\eta T}\|\md \partial^{2k-1}_{y_1}\partial_{y_3}w\|^2_{L^2(\Omega)}+\|e^{-\eta t}\md \partial^{2k-1}_{y_1}\partial_{y_3}w\|^2_{L^2(\omega^\ell_T)}\nonumber\\
   &\qquad\quad+\eta\|e^{-\eta t} L'(\partial_{y_1}^{2k-1}w)\|^2_{L^2(\Omega_T)}+e^{-2\eta T}\|L'(\partial_{y_1}^{2k-1}w)\|^2_{L^2(\Omega)}+\|e^{-\eta t}L'(\partial_{y_1}^{2k-1}w)\|^2_{L^2(\omega^\ell_T)}\nonumber\\
    &\qquad\lesssim \frac{1}{\varepsilon\eta}\|e^{-\eta t}L'(\partial_{y_1}^{2k-1}\partial_{y_3}w)\|_{L^2(\Omega_T)}^2+(1+\varepsilon\eta)\|e^{-\eta t}\md\partial_{y_1}^{2k-1}\partial_{y_3}w\|^2_{L^2(\Omega_T)}\nonumber\\
    &\quad\qquad+\sum_{|\alpha|\le 2k}\left( \frac{1}{\eta}\|e^{-\eta t}L'(\md^\alpha w)\|^2_{L^2(\Omega_T)}+\|e^{-\eta t}\mathcal{B}\md^\alpha w\|^2_{L^2(\omega_T^\ell)}\right)+\|\md^{2k-2}f|_{t=0}\|^2_{L^2(\Omega_T)}\nonumber\\
    &\quad\qquad+\delta\|e^{-\eta t}\md^{2k+1}w\|^2_{L^2(\omega^\ell_T)}+\eta\|e^{-\eta t} L'(\partial_{y_1}^{2k-1}w)\|^2_{L^2(\Omega_T)}+e^{-2\eta T}\|L'(\partial_{y_1}^{2k-1}w)\|^2_{L^2(\Omega)}\nonumber\\
    &\quad\qquad+\sum_{|\alpha|\le 2k-1}\|e^{-\eta t}L'(\md^\alpha w)\|^2_{L^2(\omega^\ell_T)}.\label{ineq:3-89}
\end{align}
We remark that the last three terms in \eqref{ineq:3-89} can be estimated by same argument as \eqref{ineq:3-64} and \eqref{ineq:3-65}.
For all $j=0,1,2,\cdots,2k+1$ we claim that
 \begin{align}
    &\eta\|e^{-\eta t}\partial_{y_1}^{2k-j+1}\partial^j_{y_3}w\|^2_{L^2(\Omega_T)}+e^{-2\eta T}\|\partial_{y_1}^{2k-j+1}\partial^j_{y_3}w\|^2_{L^2(\Omega)}+\|e^{-\eta t}\partial_{y_1}^{2k-j+1}\partial^j_{y_3}w\|^2_{L^2(\omega^\ell_T)}\nonumber\\
    &\lesssim  \frac{1}{\varepsilon\eta}\|e^{-\eta t}L'(\partial_{y_1}^{2k-1}\partial_{y_3}w)\|_{L^2(\Omega_T)}^2+(1+\varepsilon\eta)\|e^{-\eta t}\md\partial_{y_1}^{2k-1}\partial_{y_3}w\|^2_{L^2(\Omega_T)}\nonumber\\
    &\quad+\sum_{|\alpha|\le 2k}\left( \frac{1}{\eta}\|e^{-\eta t}L'(\md^\alpha w)\|^2_{L^2(\Omega_T)}+\|e^{-\eta t}\mathcal{B}\md^\alpha w\|^2_{L^2(\omega_T^\ell)}\right)+\|\md^{2k-2}f|_{t=0}\|^2_{L^2(\Omega_T)}\nonumber\\
    &\quad+\delta\|e^{-\eta t}\md^{2k+1}w\|^2_{L^2(\omega^\ell_T)}+\sum_{|\alpha|\le 2k-1}\eta\|e^{-\eta t} L'(\md^\alpha w)\|^2_{L^2(\Omega_T)}+e^{-2\eta T}\|L'(\md^\alpha w)\|^2_{L^2(\Omega)}\nonumber\\
    &\quad+\sum_{|\alpha|\le 2k-1}\|e^{-\eta t}L'(\md^\alpha w)\|^2_{L^2(\omega^\ell_T)}.\label{ineq:3-90}
\end{align}
Indeed, from \eqref{ineq:3-87} and \eqref{ineq:3-89}, we know \eqref{ineq:3-90} is valid for $j=0,1,2$. Suppose \eqref{ineq:3-90} holds for all $j\le \ell$. We proceed to show \eqref{ineq:3-90} also holds for $j=\ell+1$.
In fact, one has
\begin{align}
    \partial^{2k-\ell}_{y_1}\partial^{\ell+1}_{y_3}w=\frac{1}{r_{33}}(L'-L_0-L_2-2r_{13}\partial_{13}-r_{11}\partial_{11})\partial_{y_1}^{2k-\ell}\partial_{y_3}^{\ell-1}w.
\end{align}
Hence we have
\begin{align}
    &\eta\|e^{-\eta t}\partial^{2k-\ell}_{y_1}\partial^{\ell+1}_{y_3}w\|^2_{L^2(\Omega_T)}+e^{-2\eta T}\|\partial^{2k-\ell}_{y_1}\partial^{\ell+1}_{y_3}w\|^2_{L^2(\Omega)}+\|e^{-\eta t}\partial^{2k-\ell}_{y_1}\partial^{\ell+1}_{y_3}w\|^2_{L^2(\omega^\ell_T)}\nonumber\\
    &\lesssim \eta\|e^{-\eta t}L'(\partial_{y_1}^{2k-\ell}\partial^{\ell-1}_{y_3}w)\|^2_{L^2(\Omega_T)}+e^{-2\eta T}\|L'(\partial_{y_1}^{2k-\ell}\partial^{\ell-1}_{y_3}w\|^2_{L^2(\Omega)}\nonumber\\
    &\qquad\qquad\qquad\qquad\qquad\qquad\qquad\qquad\qquad\qquad+\|e^{-\eta t}L'(\partial_{y_1}^{2k-\ell}\partial^{\ell-1}_{y_3}w\|^2_{L^2(\omega^\ell_T)}\nonumber\\
    &\quad+\eta\|e^{-\eta t}\md^{2k}\partial_{y_0}w\|^2_{L^2(\Omega_T)}+e^{-2\eta T}\|\md^{2k}\partial_{y_0}w\|^2_{L^2(\Omega)}+\|e^{-\eta t}\md^{2k}\partial_{y_0}w\|^2_{L^2(\omega^\ell_T)}\nonumber\\
   &\quad+\eta\|e^{-\eta t}\md^{2k}\partial_{y_2}w\|^2_{L^2(\Omega_T)}+e^{-2\eta T}\|\md^{2k}\partial_{y_2}w\|^2_{L^2(\Omega)}+\|e^{-\eta t}\md^{2k}\partial_{y_2}w\|^2_{L^2(\omega^\ell_T)}\nonumber\\
   &\quad+\eta\|e^{-\eta t}\partial^{2k-\ell+2}_{y_1}\partial_{y_3}^{\ell-1}w\|^2_{L^2(\Omega_T)}+e^{-2\eta T}\|\partial^{2k-\ell+2}_{y_1}\partial_{y_3}^{\ell-1}w\|^2_{L^2(\Omega)}\nonumber\\
   &\qquad\qquad\qquad\qquad\qquad\qquad\qquad\qquad\qquad\qquad+\|e^{-\eta t}\partial^{2k-\ell+2}_{y_1}\partial_{y_3}^{\ell-1}w\|^2_{L^2(\omega^\ell_T)}\nonumber\\
   &\quad+\eta\|e^{-\eta t}\partial^{2k-\ell+1}_{y_1}\partial_{y_3}^{\ell}w\|^2_{L^2(\Omega_T)}+e^{-2\eta T}\|\partial^{2k-\ell+1}_{y_1}\partial_{y_3}^{\ell}w\|^2_{L^2(\Omega)}\nonumber\\
   &\qquad\qquad\qquad\qquad\qquad\qquad\qquad\qquad\qquad\qquad+\|e^{-\eta t}\partial^{2k-\ell+1}_{y_1}\partial_{y_3}^{\ell}w\|^2_{L^2(\omega^\ell_T)}.
\end{align}
By our induction assumption that \eqref{ineq:3-90} is valid for $j\le \ell$ and \eqref{ineq:3-68}, we deduce that
\begin{align}
    &\eta\|e^{-\eta t}\partial^{2k-\ell}_{y_1}\partial^{\ell+1}_{y_3}w\|^2_{L^2(\Omega_T)}+e^{-2\eta T}\|\partial^{2k-\ell}_{y_1}\partial^{\ell+1}_{y_3}w\|^2_{L^2(\Omega)}+\|e^{-\eta t}\partial^{2k-\ell}_{y_1}\partial^{\ell+1}_{y_3}w\|^2_{L^2(\omega^\ell_T)}\nonumber\\
    &\lesssim  \frac{1}{\varepsilon\eta}\|e^{-\eta t}L'(\partial_{y_1}^{2k-1}\partial_{y_3}w)\|_{L^2(\Omega_T)}^2+(1+\varepsilon\eta)\|e^{-\eta t}\md\partial_{y_1}^{2k-1}\partial_{y_3}w\|^2_{L^2(\Omega_T)}\nonumber\\
    &\quad+\sum_{|\alpha|\le 2k}\left( \frac{1}{\eta}\|e^{-\eta t}L'(\md^\alpha w)\|^2_{L^2(\Omega_T)}+\|e^{-\eta t}\mathcal{B}\md^\alpha w\|^2_{L^2(\omega_T^\ell)}\right)+\|\md^{2k-2}f|_{t=0}\|^2_{L^2(\Omega_T)}\nonumber\\
    &\quad+\!\!\sum_{|\alpha|\le 2k-1}\left(\|e^{-\eta t}L'(\md^\alpha w)\|^2_{L^2(\omega^\ell_T)}+\eta\|e^{-\eta t} L'(\md^\alpha w)\|^2_{L^2(\Omega_T)}+e^{-2\eta T}\|L'(\md^\alpha w)\|^2_{L^2(\Omega)}\right)\nonumber\\
    &\quad+\delta\|e^{-\eta t}\md^{2k+1}w\|^2_{L^2(\omega^\ell_T)},\label{ineq:3-93}
\end{align}
which implies \eqref{ineq:3-90} holds for $j=\ell+1$ and this completes the induction.
Now we are able to conclude the estimate of $(2k+1)$-th order. Since $\md^{2k}\partial_{y_0}w$, $\md^{2k}\partial_{y_2}w$, $\md\partial_{y_1}^{2k-1}\partial_{y_3}w$, $\partial^{2k}_{y_1}\partial_{y_3}w$ and $\partial_{y_1}^{2k-j+1}\partial^{j}_{y_3}w(0\le j\le 2k+1)$ cover all  derivatives of $(2k+1)$-th order, the sum of \eqref{ineq:3-68} and \eqref{ineq:3-90} for $0\le j\le 2k+1$ yields
\begin{align}
    &\sum_{|\alpha|\le 2k+1}\left(\eta\|e^{-\eta t}\md^\alpha w\|^2_{L^2(\Omega_T)}+e^{-2\eta T}\|\md^\alpha w\|^2_{L^2(\Omega)}+\|e^{-\eta t}\md^\alpha w\|^2_{L^2(\omega^\ell_T)}\right)\nonumber\\
    &\quad\lesssim  \frac{1}{\varepsilon\eta}\|e^{-\eta t}L'(\partial_{y_1}^{2k-1}\partial_{y_3}w)\|_{L^2(\Omega_T)}^2+(1+\varepsilon\eta)\|e^{-\eta t}\md\partial_{y_1}^{2k-1}\partial_{y_3}w\|^2_{L^2(\Omega_T)}\nonumber\\
    &\qquad+\sum_{|\alpha|\le 2k}\left( \frac{1}{\eta}\|e^{-\eta t}L'(\md^\alpha w)\|^2_{L^2(\Omega_T)}+\|e^{-\eta t}\mathcal{B}\md^\alpha w\|^2_{L^2(\omega_T^\ell)}\right)\nonumber\\
    &\qquad+\!\!\sum_{|\alpha|\le 2k-1}\left(\|e^{-\eta t}L'(\md^\alpha w)\|^2_{L^2(\omega^\ell_T)}\right.+\eta\|e^{-\eta t} L'(\md^\alpha w)\|^2_{L^2(\Omega_T)}\nonumber\\
    &\qquad\quad+\left.e^{-2\eta T}\|L'(\md^\alpha w)\|^2_{L^2(\Omega)}\right)+\delta\|e^{-\eta t}\md^{2k+1}w\|^2_{L^2(\omega^\ell_T)}+\|\md^{2k-2}f|_{t=0}\|^2_{L^2(\Omega_T)}.\label{ineq:3-94}
\end{align}
Let $\delta$ and $\varepsilon$ be appropriately small and estimate the terms on the second last line of \eqref{ineq:3-94} by same arguments as \eqref{ineq:3-64} and \eqref{ineq:3-65}, then let $\eta$ be properly large, we are led to
\begin{align}
    &\sum_{|\alpha|\le 2k+1}\left(\eta\|e^{-\eta t}\md^\alpha w\|^2_{L^2(\Omega_T)}+e^{-2\eta T}\|\md^\alpha w\|^2_{L^2(\Omega)}+\|e^{-\eta t}\md^\alpha w\|^2_{L^2(\omega^\ell_T)}\right)\nonumber\\
    &\lesssim\sum_{|\alpha|\le 2k}\left( \frac{1}{\eta}\|e^{-\eta t}L'(\md^\alpha w)\|^2_{L^2(\Omega_T)}+\|e^{-\eta t}\mathcal{B}\md^\alpha w\|^2_{L^2(\omega_T^\ell)}\right)+\|\md^{2k-2}f|_{t=0}\|^2_{L^2(\Omega_T)},\label{ineq:3-95}
\end{align}
which is nothing but the estimate of $(2k+1)$-th order.

Next, we continue to derive the estimate of $(2k+2)$-th order, on the basis of the estimate of $(2k+1)$-th order. Apply \eqref{ineq:3-95} to $\partial_{y_0}w$ and $\partial_{y_2}w$, we have
\begin{align}
    &\sum_{|\alpha|\le 2k+1}\left(\eta\|e^{-\eta t}\md^\alpha\partial_{y_0} w\|^2_{L^2(\Omega_T)}+e^{-2\eta T}\|\md^\alpha\partial_{y_0} w\|^2_{L^2(\Omega)}+\|e^{-\eta t}\md^\alpha \partial_{y_0}w\|^2_{L^2(\omega^\ell_T)}\right)\nonumber\\
    &+\sum_{|\alpha|\le 2k+1}\left(\eta\|e^{-\eta t}\md^\alpha\partial_{y_2} w\|^2_{L^2(\Omega_T)}+e^{-2\eta T}\|\md^\alpha\partial_{y_2} w\|^2_{L^2(\Omega)}+\|e^{-\eta t}\md^\alpha \partial_{y_2}w\|^2_{L^2(\omega^\ell_T)}\right)\nonumber\\
    &\lesssim\sum_{|\alpha|\le 2k+1}\left( \frac{1}{\eta}\|e^{-\eta t}L'(\md^\alpha w)\|^2_{L^2(\Omega_T)}+\|e^{-\eta t}\mathcal{B}\md^\alpha w\|^2_{L^2(\omega_T^\ell)}\right)+\|\md^{2k}f|_{t=0}\|^2_{L^{2}(\Omega_T)}.\label{ineq:3-96}
\end{align}
Then we will firstly establish the first order derivative of $\partial^{2k}_{y_1}\partial_{y_3}w$.
It is clear that
\begin{align}
   L'(\partial^{2k}_{y_1}\partial_{y_3}w)=\partial^{2k}_{y_1}\partial_{y_3}f-[\partial^{2k}_{y_1}\partial_{y_3},L']w\quad \mbox{in}\quad \Omega_T,&\label{eq:3-97}\\
   \partial^{2k}_{y_1}\partial_{y_3}w=0\quad\mbox{on}\quad \omega^r_T,&\\
   (\partial^{2k}_{y_1}\partial_{y_3}w,\partial_{y_0}(\partial^{2k}_{y_1}\partial_{y_3}w))=(0,0)\quad\mbox{on}\quad \Gamma_{in}.&
\end{align}
We have to deduce the boundary condition on $\omega^\ell$ for $\partial^{2k}_{y_1}\partial_{y_3}w$. By the definitions of $L'$, $L_0$ and $L_2$, it is clear that
\begin{align}
    \partial_{y_1}(\partial_{y_1}^{2k}\partial_{y_3}w)=(L'-L_0-L_2-2r_{13}\partial_{13}-r_{33}\partial_{33})\partial_{y_1}^{2k-1}\partial_{y_3}w.\label{eq:3-100}
\end{align}
Hence, we need to determine $\partial_{y_1}^{2k-1}\partial^3_{y_3}w$ on $\omega^\ell$.
From the boundary condition $\mathcal{B}w=g$ on $w^\ell$, we notice that
\begin{align}
    \partial_{y_1}\partial^{2k+1}_{y_3}w=(\mathcal{B}-b_0\partial_{y_0}-\partial_{y_2}\partial_{y_2}-b_3\partial_{y_3})\partial_{y_1}^{2k+1}w.\nonumber
\end{align}
Thus we have
\begin{align}
    \abs{\partial_{y_1}\partial^{2k+1}_{y_3}w}\lesssim \mathcal{B}\partial^{2k+1}_{y_1}w+\abs{\md^{2k+1}\partial_{y_0}w}+\abs{\md^{2k+1}\partial_{y_2}w}+\delta\abs{\md^{2k+2}w}.\label{ineq:3-101}
\end{align}
Again by the definitions of $L'$, $L_0$ and $L_2$,  we can further deduce
\begin{align}
    \partial_{y_1}^{2k-2j-1}\partial_{y_3}^{2j+3}w=\frac{1}{r_{33}}(L'-L_0-L_2-2r_{13}-r_{11}\partial_{11})\partial_{y_1}^{2k-2j-1}\partial_{y_3}^{2j+1}w.\label{eq:3-102}
\end{align}
For $1\le j\le k$, let
\begin{align}
    \alpha_j&\defs \partial_{y_1}^{2k-2j+1}\partial_{y_3}^{2j+1}w,\label{eq:3-103}\\
    B_j&\defs\frac{1}{r_{33}}(L'-L_0-L_2-2r_{13})\partial_{y_1}^{2k-2j-1}\partial_{y_3}^{2j+1}w.\label{eq:3-104}
\end{align}
From \eqref{ineq:3-101}-\eqref{eq:3-104}, we obtain a finite sequence $\{\alpha_j\}_{j=1}^k$ satisfying
\begin{equation*}
    \left\{\!\!
    \begin{array}{ll}
    \alpha_{k}\lesssim \abs{\mathcal{B}\partial^{2k+1}_{y_1}w}+\abs{\md^{2k+1}\partial_{y_0}w}+\abs{\md^{2k+1}\partial_{y_2}w}+\delta\abs{\md^{2k+2}w},\\
    \alpha_{j+1}=B_j-\frac{r_{11}}{r_{33}}\alpha_j\mbox{\ }(j=1,2,\cdots k),\\
    \abs{B_j}\lesssim \abs{L'(\partial_{y_1}^{2k-2j-1}\partial_{y_3}^{2j+1}w)}+\abs{\md^{2k+1}\partial_{y_0}w}+\abs{\md^{2k+1}\partial_{y_2}w}+\delta\abs{\md^{2k+2}w}.
\end{array}
    \right.
\end{equation*}
Analogous to the sequence $\{\beta_j\}$, by induction on $j$, one can deduce that $\alpha_j$ $(1\le j\le k)$ satisfies
\begin{align}
    \abs{\alpha_j}\lesssim \sum_{|\alpha|\le 2k}\abs{L'(\md^\alpha w)}+\abs{\md^{2k+1}\partial_{y_0}w}+\abs{\md^{2k+1}\partial_{y_2}w}+\delta\abs{\md^{2k+2}w}+\!\!\!\sum_{|\alpha|\le 2k+1}\!\!\abs{\mathcal{B}\md^\alpha w},\nonumber
\end{align}
and so does $\alpha_1=\partial^{2k-1}_{y_1}\partial_{y_3}^{3}w$. Armed with this estimate for $\partial^{2k-1}_{y_1}\partial_{y_3}^{3}w$, we obtain from \eqref{eq:3-100} that
\begin{align}
    \abs{\partial_{y_1}(\partial^{2k}_{y_1}\partial_{y_3}w)}&\lesssim \sum_{|\alpha|\le 2k}\abs{L'(\md^\alpha w)}+\abs{\md^{2k+1}\partial_{y_0}w}+\abs{\md^{2k+1}\partial_{y_2}w}\nonumber\\
    &\qquad\quad+\delta\abs{\md^{2k+2}w}+\!\!\!\sum_{|\alpha|\le 2k+1}\!\!\abs{\mathcal{B}\md^\alpha w}.\label{ineq:3-105}
\end{align}
Thanks to lemma \ref{Multiplier for almost normal boundary conditions}, we are able to derive the first order estimate of $\partial^{2k}_{y_1}\partial_{y_3}w$. Multiplying $2e^{-2\eta t}\mcq^e(\partial^{2k}_{y_1}\partial_{y_3}w)$ on both sides of \eqref{eq:3-97}, integration by parts over $\Omega_T$, applying lemma \ref{Multiplier for almost normal boundary conditions} and Cauchy inequality, one has
\begin{align}
   & \eta\|e^{-\eta t}\nabla_{\mby}\partial^{2k}_{y_1}\partial_{y_3}w\|^2_{L^2(\Omega_T)}+e^{-2\eta T}\|\nabla_{\mby}\partial^{2k}_{y_1}\partial_{y_3}w\|^2_{L^2(\Omega)}\nonumber\\
    &+\|e^{-\eta t}\partial_{y_3}\partial^{2k}_{y_1}\partial_{y_3}w\|^2_{L^2(\omega^\ell_T)}+\|e^{-\eta t}\partial_{y_3}\partial^{2k}_{y_1}\partial_{y_3}w\|^2_{L^2(\omega^r_T)}\nonumber\\
    &\qquad\lesssim \frac{1}{\varepsilon\eta}\|e^{- \eta t}L'(\partial^{2k}_{y_1}\partial_{y_3}w)\|^2_{L^2(\Omega_T)}+(1+\varepsilon\eta)\|e^{-\eta t}\md\partial_{y_1}^{2k}\partial_{y_3}w\|^2_{L^2(\Omega_T)}\nonumber\\
    &\quad\qquad+\eta\|e^{-\eta t}\partial_{y_0}\partial^{2k}_{y_1}\partial_{y_3}w\|^2_{L^2(\Omega_T)}+e^{-2\eta T}\|\partial_{y_0}\partial^{2k}_{y_1}\partial_{y_3}w\|^2_{L^2(\Omega)}\nonumber\\
    &\quad\qquad+\sum_{i=0}^2\|e^{-\eta t}\partial_{y_i}\partial^{2k}_{y_1}\partial_{y_3}w\|^2_{L^2(\omega^\ell_T)}.\nonumber
\end{align}
In view of \eqref{ineq:3-96} and \eqref{ineq:3-105}, we obtain from above inequality that
\begin{align}
    & \eta\|e^{-\eta t}\md\partial^{2k}_{y_1}\partial_{y_3}w\|^2_{L^2(\Omega_T)}+e^{-2\eta T}\|\md\partial^{2k}_{y_1}\partial_{y_3}w\|^2_{L^2(\Omega)}\nonumber\\
    &+\|e^{-\eta t}\md\partial^{2k}_{y_1}\partial_{y_3}w\|^2_{L^2(\omega^\ell_T)}+\|e^{-\eta t}\partial_{y_3}\partial^{2k}_{y_1}\partial_{y_3}w\|^2_{L^2(\omega^r_T)}\nonumber\\
    &\lesssim \frac{1}{\varepsilon\eta}\|e^{- \eta t}L'(\partial^{2k}_{y_1}\partial_{y_3}w)\|^2_{L^2(\Omega_T)}+(1+\varepsilon\eta)\|e^{-\eta t}\md\partial_{y_1}^{2k}\partial_{y_3}w\|^2_{L^2(\Omega_T)}\nonumber\\
    &\quad+\sum_{|\alpha|\le 2k+1}\left( \frac{1}{\eta}\|e^{-\eta t}L'(\md^\alpha w)\|^2_{L^2(\Omega_T)}+\|e^{-\eta t}\mathcal{B}\md^\alpha w\|^2_{L^2(\omega_T^\ell)}\right)+\|\md^{2k}f|_{t=0}\|^2_{L^{2}(\Omega_T)}\nonumber\\
    &\quad+\sum_{|\alpha|\le 2k}\|e^{-\eta t}L'(\md^\alpha w)\|^2_{L^2(\omega^\ell_T)}+\delta\|\md^{2k+2}w\|^2_{L^2(\omega^\ell_T)}.\label{ineq:3-106}
\end{align}
Now we turn to the estimate of $\partial_{y_1}^{2k+2}w$ and the estimate of the derivatives in the form of $\partial^{2k-j+2}_{y_1}\partial^j_{y_3}w$ with $3\le j\le 2k+2$.
By the definitions of $L'$, $L_0$ and $L_2$, one has
\begin{align}
    \partial_{y_1}^{2k+2}w=\frac{1}{r_{11}}\left(L'-L_0-L_2-2r_{13}\partial_{13}-r_{33}\partial_{33}\right)\partial_{y_1}^{2k}w.
\end{align}
So $\partial_{y_1}^{2k+2}w$ can be estimated by controlled terms, i.e.,
\begin{align}
    \abs{\partial_{y_1}^{2k+2}w}\lesssim \abs{L'(\partial_{y_1}^{2k}w)}+\abs{\md^{2k+1}\partial_{y_0}w}+\abs{\md^{2k+1}\partial_{y_2}w}+\abs{\md\partial_{y_1}^{2k}\partial_{y_3}w}.
\end{align}
This together with \eqref{ineq:3-96} and \eqref{ineq:3-106} imply
\begin{align}
   &\eta\|e^{-\eta t}\partial_{y_1}^{2k+2}w\|^2_{L^2(\Omega_T)}+e^{-2\eta T}\|\partial_{y_1}^{2k+2}w\|^2_{L^2(\Omega)}+
    \|e^{-\eta t}\partial_{y_1}^{2k+2}w\|^2_{L^2(\omega^\ell_T)}\nonumber\\
     &\lesssim \frac{1}{\varepsilon\eta}\|e^{- \eta t}L'(\partial^{2k}_{y_1}\partial_{y_3}w)\|^2_{L^2(\Omega_T)}+(1+\varepsilon\eta)\|e^{-\eta t}\md\partial_{y_1}^{2k}\partial_{y_3}w\|^2_{L^2(\Omega_T)}\nonumber\\
    &\quad+\sum_{|\alpha|\le 2k+1}\left( \frac{1}{\eta}\|e^{-\eta t}L'(\md^\alpha w)\|^2_{L^2(\Omega_T)}+\|e^{-\eta t}\mathcal{B}\md^\alpha w\|^2_{L^2(\omega_T^\ell)}\right)+\|\md^{2k}f|_{t=0}\|^2_{L^{2}(\Omega_T)}\nonumber\\
    &\quad+\sum_{|\alpha|\le 2k}\|e^{-\eta t}L'(\md^\alpha w)\|^2_{L^2(\omega^\ell_T)}+\delta\|\md^{2k+2}w\|^2_{L^2(\omega^\ell_T)}+\eta\|e^{-\eta t}L'(\partial_{y_1}^{2k}w)\|^2_{L^2(\Omega_T)}\nonumber\\
    &\quad+e^{-2\eta T}\|L'(\partial_{y_1}^{2k}w)\|^2_{L^2(\Omega)}+
    \|e^{-\eta t}L'(\partial_{y_1}^{2k}w)\|^2_{L^2(\omega^\ell_T)}.
\end{align}
Then by simple induction argument as we use in \eqref{ineq:3-90}-\eqref{ineq:3-93}, one deduces for all $3\le j\le 2k+2$ that
\begin{align}
    &\eta\|e^{-\eta t}\partial^{2k-j+2}_{y_1}\partial^j_{y_3}w\|^2_{L^2(\Omega_T)}+e^{-2\eta T}\|\partial^{2k-j+2}_{y_1}\partial^j_{y_3}w\|^2_{L^2(\Omega)}+\|e^{-\eta t}\partial^{2k-j+2}_{y_1}\partial^j_{y_3}w\|^2_{L^2(\omega^\ell_T)}\nonumber\\
    &\lesssim \frac{1}{\varepsilon\eta}\|e^{-\eta t}L'(\partial_{y_1}^{2k}\partial_{y_3}w)\|_{L^2(\Omega_T)}^2+(1+\varepsilon\eta)\|e^{-\eta t}\md\partial_{y_1}^{2k}\partial_{y_3}w\|^2_{L^2(\Omega_T)}\nonumber\\
    &\quad+\sum_{|\alpha|\le 2k+1}\left( \frac{1}{\eta}\|e^{-\eta t}L'(\md^\alpha w)\|^2_{L^2(\Omega_T)}+\|e^{-\eta t}\mathcal{B}\md^\alpha w\|^2_{L^2(\omega_T^\ell)}\right)+\|\md^{2k}f|_{t=0}\|^2_{L^{2}(\Omega_T)}\nonumber\\
    &\quad+\delta\|e^{-\eta t}\md^{2k+1}w\|^2_{L^2(\omega^\ell_T)}+\sum_{|\alpha|\le 2k}\eta\|e^{-\eta t} L'(\md^\alpha w)\|^2_{L^2(\Omega_T)}+e^{-2\eta T}\|L'(\md^\alpha w)\|^2_{L^2(\Omega)}\nonumber\\
    &\quad+\sum_{|\alpha|\le 2k}\|e^{-\eta t}L'(\md^\alpha w)\|^2_{L^2(\omega^\ell_T)}.\label{ineq:3-110}
\end{align}
To this end, adding \eqref{ineq:3-96}, \eqref{ineq:3-106} and \eqref{ineq:3-110} for all $3\le j\le 2k+2$ together, then let $\varepsilon$, $\delta$ be properly small and $\eta$ be appropriately large, one concludes that
\begin{align}
    &\sum_{|\alpha|\le 2k+2}\left(\eta\|e^{-\eta t}\md^\alpha w\|^2_{L^2(\Omega_T)}+e^{-2\eta T}\|\md^\alpha w\|^2_{L^2(\Omega)}+\|e^{-\eta t}\md^\alpha w\|^2_{L^2(\omega^\ell_T)}\right)\nonumber\\
    &\lesssim \sum_{|\alpha|\le 2k+1}\left( \frac{1}{\eta}\|e^{-\eta t}L'(\md^\alpha w)\|^2_{L^2(\Omega_T)}+\|e^{-\eta t}\mathcal{B}\md^\alpha w\|^2_{L^2(\omega_T^\ell)}\right)\nonumber\\
    &\quad+\|\md^{2k}f|_{t=0}\|^2_{L^{2}(\Omega_T)}.\label{ineq:3-111}
\end{align}
This completes the induction process from the estimate of $2k$-th order to $(2k+2)$-th order and hence finishes our proof of proposition \ref{well-posedness of LPE}.
\end{proof}
\subsection{Proof of theorem \ref{well-posedness and energy estimate of the LP}}
Based on proposition \eqref{well-posedness of LPE}, we are able to prove theorem \ref{well-posedness and energy estimate of the LP} by carefully estimating $L'(\md^\alpha w)$ for $|\alpha|\le s\le n_0+2$.

\begin{proof}[Proof of Theorem \ref{well-posedness of LPE}]
It is clear that the estimate in Proposition \ref{well-posedness and energy estimate of the LP} holds for all $T\ge 0$. Hence, in order to prove theorem \ref{well-posedness and energy estimate of the LP}, we just need to estimate $L'\left(\mathrm{D}^\alpha w\right)$ and $\mathcal{B}(\mathrm{D}^\alpha w)$.
First, for $L'\left(\mathrm{D}^\alpha w\right)$, actually
	\begin{equation}
	\begin{split}
	L'\left(\mathrm{D}^\alpha w\right)&=-\left[\mathrm{D}^\alpha,L'\right]w+\mathrm{D}^\alpha\left(L'w\right)\\
	&=-[\mathrm{D}^\alpha,L']w+\mathrm{D}^\alpha f.
	\end{split}
	\end{equation}
Then we need to estimate the commutator $[\mathrm{D}^\alpha,L']w$.
By definition

\[
[\mathrm{D}^\alpha,L']w=\mathrm{D}^\alpha(r_{ij}\partial_{ij}w)-r_{ij}\mathrm{D}^\alpha\partial_{ij}w+\mathrm{D}^\alpha(r_i\partial_{i}w_1)-a_i\mathrm{D}^\alpha(\partial_{i}w)+\mathrm{D}^\alpha(rw)-r\mathrm{D}^\alpha w.
\]
For the commutator, we claim:
\begin{equation}\label{3.115}
\begin{array}{ll}
&\mbox{$[\mathrm{D}^\alpha,L']w$ is a linear combination of finitely many terms, and each}\\
&\mbox{term is a product of derivatives of $u$ and $w$, in which at most one factor} \\
&\mbox{has $u$ and $w$ differentiated more than $\frac{|\alpha|+2}{2}$ times.}
\end{array}
\end{equation}
To show claim \eqref{3.115}, we observe that
$\mathrm{D}^\alpha(r_{ij}\partial_{ij}w)-r_{ij}\mathrm{D}^\alpha\partial_{ij}w$ is a linear combination of the terms of the following form:
	$$
	p_{ij}(\nabla\mcw(u),\mathrm{D}u)\partial^{\mu_1}\nabla\mcw\partial^{\mu_2}\nabla\mcw\cdots\partial^{\mu_l}\nabla\mcw\partial^{\gamma_1}\mathrm{D}u\partial^{\gamma_2}\mathrm{D}u\cdots\partial^{\gamma_k}\mathrm{D}u\partial^{\sigma}\partial_{ij}w
	$$
	in which $|\mu_1|+|\mu_2|+\cdots+|\mu_l|+|\gamma_1|+|\gamma_2|+\cdots+|\gamma_k|+|\sigma|=|\alpha|$ and $|\sigma|\le |\alpha|-1$.

\textit{Case }1: If $|\sigma|\ge \frac{|\alpha|-2}{2}$, then
		$|\mu_1|+|\mu_2|+\cdots+|\mu_l|+|\gamma_1|+|\gamma_2|+\cdots+|\gamma_k|\le \frac{(|\alpha|+2)}{2}$. So $|\mu_j|\le \frac{(|\alpha|+2)}{2}$ and $|\gamma_j|\le \frac{(|\alpha|+2)}{2}$ and they cannot achieve $\frac{|\alpha|+2}{2}$ at the same time.
	
\textit{Case }2: If $|\sigma|< \frac{|\alpha|-2}{2}$, then $|\mu_1|+|\mu_2|+\cdots+|\mu_l|+|\gamma_1|+|\gamma_2|+\cdots+|\gamma_k|\le |\alpha|$. So there is at most one index among $\{\mu_1,\cdots,\gamma_k\}$ whose value is larger than $\frac{|\alpha|}{2}$.
Because $|\alpha|\le s-1$, we have $\frac{(|\alpha|+2)}{2}\le s$.
Similar argument to $\mathrm{D}^\alpha(r_i\partial_{i}w)-r_i\mathrm{D}^\alpha(\partial_{i}w_1)$ and $\mathrm{D}^\alpha(r\partial_{i}w)-r\mathrm{D}^\alpha(\partial_{i}w)$ implies both of them have similar forms as the one for $\mathrm{D}^\alpha(r_{ij}\partial_{ij}w)-r_{ij}\mathrm{D}^\alpha\partial_{ij}w$. Therefore, claim \eqref{3.115} holds.
Based on \ref{h3}, equation $\eqref{Upper bound of un}$, and the claim \eqref{3.115}, we have
{\small\begin{equation*}
\left|[\mathrm{D}^\alpha,L']w\right|  \le C_\delta(\mathop{\sum_{1\le|\gamma|\le|\alpha|}}_ {1\leq\sigma\leq\frac{|\alpha|-2}{2}}\abs{\mathrm{D}^\gamma u}\|\md^{\sigma}\partial_{ij}w\|_{L^{\infty}}
+\!\!\!\sum_{|\gamma|=|\alpha|+1}\abs{\mathrm{D}^\gamma u}|\partial_{ij}w|+\!\!\!\sum_{2\le|\gamma|\le|\alpha|+1}\abs{\mathrm{D}^\gamma w}).
\end{equation*}}	
Note that $|\alpha|\leq s-1$. So if $7\le s\le n_0+3$, then the Sobolev embedding theorem indicates $\|\md^{\sigma}\partial_{ij}w\|_{L^{\infty}}\leq\|w\|_{H^{s}}$. Therefore,
\begin{align}
\sum_{|\alpha|\le s-1}\|L'(\mathrm{D}^\alpha w)\|_{L^2}
\lesssim&\|w\|_{H^{s}}+\|f\|_{H^{s-1}}+\delta\|\md^2w\|_{L^\infty(\Omega)}\|u\|_{H^{s}}.
\end{align}

Then by choosing $\eta$ be large and $\delta$ be small, it follows from Proposition \ref{well-posedness of LPE} that
\begin{align}
& \sum_{|\alpha|\leq s} \eta\|e^{-\eta t}\md^\alpha w\|^2_{L^2(\Omega_T)}+e^{-2 \eta T}\|\md^\alpha w\|^2_{L^2(\Omega)}+\|e^{-\eta t}\md^\alpha w\|^2_{L^2(\omega^\ell_T)}\nonumber\\
&\quad\lesssim \frac{1}{\eta}\left(\|e^{-\eta t}u\|^2_{H^s(\Omega_T)}\cdot\sup_{0\le t\le T}\|w(t,\cdot)\|^2_{H^4(\Omega)}+\|e^{-\eta t}f\|^2_{H^{s-1}(\Omega_T)}\right)\nonumber\\
&\quad\quad+\!\!\sum_{|\alpha|\le s-1}\|e^{-\eta t}\mcb\md^\alpha w\|^2_{L^2(\omega^\ell_T)}.\label{3-117}
\end{align}
where constant $C$ does not depend on $u$ and we choose $y_0$ sufficiently small.

Note that
\begin{equation}
\begin{split}
\mathcal{B}\left(\mathrm{D}^\alpha w\right)&=-\left[\mathrm{D}^\alpha,\mathcal{B}\right]w+\mathrm{D}^\alpha\left(\mathcal{B}w\right)\\
&=-[\mathrm{D}^\alpha,\mathcal{B}]w+\mathrm{D}^\alpha g.
\end{split}
\end{equation}

Similar to claim \eqref{3.115}, $\left[\mathrm{D}^\alpha,\mathcal{B}\right]w$ is a linear combination of finitely many terms, and each term is a product of derivatives of $u$ and $w$, in which at most one factor has $u$ and $w$ differentiated more than $\frac{(|\alpha|+1)}{2}$ times.

Therefore, by the trace theorem, one has
\begin{align}
 &\sum_{|\alpha|\le s-1}\int_0^{y_0}e^{-2\eta t}\|\mathcal{B}\mathrm{D}^\alpha w(t,0,\cdot)\|^2_{L^2(\mathbb{R_+})}\mathrm{d}t\nonumber\\
&\quad\lesssim  \sum_{|\alpha|\le s-1}\int_0^{y_0}e^{-2\eta t}(\delta\|\mathrm{D}^\alpha w(t,0,\cdot)\|^2_{L^2(\mathbb{R_+})}
+\|\mathrm{D}^\alpha g(t,0,\cdot)\|^2_{L^2(\mathbb{R_+})})\mathrm{d}t\nonumber\\
&\quad\lesssim  \sum_{|\alpha|\le s}\delta\int_0^{y_0}e^{-2\eta t}\|\mathrm{D}^\alpha w(t,\cdot)\|^2_{L^2(\Omega)}\mathrm{d}t+\!\!\!\sum_{|\alpha|\le s-1}\int_0^{y_0}\!\!e^{-2\eta t}\|\mathrm{D}^\alpha g(t,0,\cdot)\|^2_{L^2(\mathbb{R_+})}\mathrm{d}t.\label{3-119}
\end{align}

Substitute \eqref{3-119} into \eqref{3-117}, let $s=4$ in \eqref{3-117} and repeat above process, then let $\delta$, $T$ and $\frac{1}{\eta}$ be small, one can deduce that solution $w$ of problem \eqref{LP} satisfies the estimate $\eqref{ineq: LP energy estimate}$.
This completes the proof of this theorem.
\end{proof}
\section{Well-posedness of the non-linear problem}\label{sec:4}
\subsection{Reformulation of the non-linear problem}
We firstly reformulate the non-linear problem $\eqref{NLP}$.
Let
$$\bar{u}\defs u+y_3\frac{\partial_{x_1}\mcw}{1+|\partial_{x_1}\mcw|^2+|\partial_{x_2}\mcw|^2},$$
where $u$ is the solution to the \eqref{NLP}. Since
\begin{align}
    \frac{\partial \bar{u}}{\partial u}&=1+y_3\left(\frac{\partial_{x_1x_1}\mcw+\partial_{x_1x_2}\mcw\dfrac{\partial x_2}{\partial u}}{1+|\partial_{x_1}\mcw|^2+|\partial_{x_2}\mcw|^2}\right)\left(-\frac{1-|\partial_{x_1}\mcw|^2+|\partial_{x_2}\mcw|^2}{1+|\partial_{x_1}\mcw|^2+|\partial_{x_2}\mcw|^2}\right)\nonumber\\
    &-y_3\frac{2\partial_{x_1}\mcw\partial_{x_2}\mcw}{(1+|\partial_{x_1}\mcw|^2+|\partial_{x_2}\mcw|^2)^2}\left(\partial_{x_1x_2}\mcw+\partial_{x_2x_2}\mcw\dfrac{\partial x_2}{\partial u}\right),
\end{align}
one has $\frac{\partial \bar{u}}{\partial u}>0$, when $y_3$ and $\|\mcw\|_{W^{2,\infty}}$ is sufficiently small. Then by the implicit function theorem, $u$ can be expressed as a function with respect to $\bar{u}$, $y_2$ and $y_3$. We assume $u=\kappa(\bar{u},y_2,y_3)$ for some smooth function $\kappa$. By the property of our background solution, i.e., the nozzle wall $\Gamma_0$ is flat at the background solution, we have $\bar{u}=u_b$, if $u=u_b$. That is to say $\kappa(u_b,y_2,y_3)=u_b$. For notational simplicity, let
\[
N(x_1,x_2)=\left(\frac{\partial_{x_1}\mcw}{1+|\partial_{x_1}\mcw|^2+|\partial_{x_2}\mcw|^2}\right)(x_1,x_2).
\]
Then by direct computation, one has
\begin{equation}\label{4-2}
    \left\{\!\!
    \begin{array}{ll}
     \partial_{\bar{u}}\kappa=\dfrac{1+y_3\partial_{x_2}N}{1+y_3(\partial_{x_1}N+\partial_{x_2}N)},\\
     \partial_{y_2}\kappa=-\dfrac{y_3\partial_{x_2}N}{1+y_3(\partial_{x_1}N+\partial_{x_2}N)},\\
    \partial_{y_3}\kappa=-\dfrac{N}{1+y_3(\partial_{x_1}N+\partial_{x_2}N)}.
\end{array}
    \right.
\end{equation}
It is easy to see that $\partial_{\bar{u}}\kappa$ is close to one and $\partial_{\bar{u}}\kappa|_{\omega^r}\equiv 1$, while $\partial_{y_2}\kappa$ and $\partial_{y_3}\kappa$ are close to zero. The second order derivatives of $\kappa$ with respect ot $\bar{u}$, $y_2$, $y_3$ is listed in the appendix.
From $\eqref{equation satisfied by u}$-$\eqref{eq:2-31}$ we deduce that $\bar{u}$ satisfies
\begin{align}
    \partial_{\bar{u}}\kappa\sum_{i,j=0}^3\tilde{a}_{ij}\partial_{ij}\bar{u}=F(\bar{u},\md\bar{u}),\label{eq:4-2}
\end{align}
where
\begin{align}
    F(\bar{u},\md\bar{u})&=\partial_{\bar{u}\bar{u}}\kappa\tilde{a}_{ij}\partial_i\bar{u}\partial_j\bar{u}+2\sum_{j=0}^3\kappa_{\bar{u}y_2}\tilde{a}_{2j}\partial_j\bar{u}+2\sum_{j=0}^3\kappa_{\bar{u}y_3}\tilde{a}_{3j}\partial_j\bar{u}\nonumber\\
    &\quad+\tilde{a}_{22}\partial_{y_2y_2}\kappa+2\tilde{a}_{23}\partial_{y_2y_3}\kappa+\tilde{a}_{33}\partial_{y_3y_3}\kappa-(\tilde{a}_2\partial_{y_2}u+\tilde{a}_{3}\partial_{y_3}u)\nonumber\\
    &\quad-(a_{12}\partial_{x_1x_2}p(\partial_{y_1}u)^3+(\partial_{y_1}u)^3\sum_{i,j=0}^3\limits a_{ij}\partial_{x_ix_j}\Phi^-),
\end{align}
where $\partial_{y_i}u$ can be replaced by $\partial_{\bar{u}}\kappa\partial_{y_i}\bar{u}+\partial_{y_2}\kappa\delta_{i2}+\partial_{y_3}\kappa\delta_{i3}$.

The initial conditions for $\bar{u}$ now become
\begin{align}
    \bar{u}|_{y_0=0}&=u_0+y_3N(u_0,x_2(u_0,y_2,y_3)),\label{eq:4-4}\\
    \partial_{y_0}\bar{u}|_{y_0=0}&=u_1\cdot(1+y_3\partial_{x_1}\left(\frac{\partial_{x_1}N}{1+y_3\partial_{x_2}N}\right)(u_0,x_2(u_0,y_2,y_3)).\label{eq:4-5}
\end{align}
The boundary conditions for $\bar{u}$ are
\begin{align}
    \partial_{y_3}\bar{u}&=0\quad\mbox{on}\quad \omega^r_T,\\
    G(\kappa(\bar{u},y_2,y_3),\md u)&=0\quad\mbox{on}\quad\omega^\ell_T,\label{eq:4-7}
\end{align}
where $\partial_{y_i}u$ should be replaced by $\partial_{\bar{u}}\kappa\partial_{y_i}\bar{u}+\partial_{y_2}\kappa\delta_{i2}+\partial_{y_3}\kappa\delta_{i3}$.

Let $\bar{u}_j(\mby)\defs\partial_{y_0}^j \bar{u}(y_0,\mby)|_{y_0=0}$, which can be derived by differentiating \eqref{eq:4-2} with respect to $y_0$. Obviously, $\bar{u}_0$ and $\bar{u}_1$ are give by \eqref{eq:4-4} and \eqref{eq:4-5} respectively. Let
\[
\psi(y_0,\mby)\defs \bar{u}_0+\bar{u}_1y_0+\frac{\bar{u}_2}{2!}y_0^2+\cdots+\frac{\bar{u}_{s_0}}{s_0!}y_0^{s_0}.
\]

We introduce a new unknown $\tilde{u}\defs \bar{u}-\psi$ and define $\tilde{u}_b\defs u_b-\psi$. Then $\tilde{u}$ satisfies
\begin{equation}\label{4.9}
\left\{\!\!
\begin{array}{ll}
     \partial_{\bar{u}}\kappa\sum_{i,j=0}^3\limits\tilde{a}_{ij}\partial_{ij}\tilde{u}=F(\tilde{u}+\psi,\md(\tilde{u}+\psi))-\partial_{\bar{u}}\kappa\sum_{i,j=0}^3\limits\tilde{a}_{ij}\partial_{ij}\psi\quad&\mbox{in}\quad \Omega_T,\\
     G(\kappa(\tilde{u}+\psi,y_2,y_3),\md (\kappa(\tilde{u}+\psi,y_2,y_3)))=0\quad&\mbox{on} \quad\omega^\ell_T,\\
     \partial_{y_3}\tilde{u}=0\quad&\mbox{on} \quad\omega^r_T,\\
     (\tilde{u},\partial_{y_0}\tilde{u})|_{y_0=0}=(0,0)\quad&\mbox{on} \quad\Gamma_{in}.
\end{array}
\right.
\end{equation}
If we can solve this problem for $\tilde{u}$, then clearly $\tilde{u}+\psi$ is the desired solution to the non-linear problem \eqref{NLP}.
\subsection{Proof of theorem \ref{wellposedness of the NLP}}\label{sec:5}

In this section, we introduce a iterative scheme to deduce the existence of smooth solution to the non-linear problem \eqref{4.9}.
 Let $\tilde{u}_0\defs 0$ and $\tilde{u}_{m+1}$ ($m\ge 0$) is defined as the solution to the following initial boundary value problem
\begin{equation}\label{eq:5.1}
\left\{\!\!
\begin{array}{ll}
     \partial_{\bar{u}}\kappa\sum_{i,j=0}^3\limits\tilde{a}^m_{ij}\partial_{ij}\tilde{u}_{m+1}=
F_m-\partial_{\bar{u}}\kappa\tilde{a}^{m}_{ij}\partial_{ij}\psi
 \quad&\mbox{in}\quad \Omega_T,\\
     \mathcal{B}\tilde{u}_{m+1}=\mathcal{B}\tilde{u}_{m}-G_m\quad&\mbox{on} \quad\omega^\ell_T,\\
     \partial_{y_3}\tilde{u}_{m+1}=0\quad&\mbox{on} \quad\omega^r_T,\\
     (\tilde{u}_{m+1},\partial_{y_0}\tilde{u}_{m+1})|_{y_0=0}=(0,0)\quad&\mbox{on}\quad\Gamma_{in},
\end{array}
\right.
\end{equation}
where $\tilde{a}_{ij}^m=\tilde{a}_{ij}|_{u=\kappa(\tilde{u}_m+\psi,y_2,y_3)}$, $F_m\defs F(\tilde{u}_m+\psi,\md(\tilde{u}_m+\psi))$, $G_m\defs G|_{u=\kappa(\tilde{u}_m+\psi,y_2,y_3)}$ and
\[
\mathcal{B}=\partial_{\bar{u}}\kappa\sum_{i=0}^3\frac{\partial G}{\partial u_{y_i}}\bigg|_{u=u_b}\cdot\partial_{y_i}+\partial_{\bar{u}}\kappa\frac{\partial G}{\partial u}\bigg|_{u=u_b}
\]
and $G=G(u, \md u)$ is defined in \eqref{eq:2-34}.

Before proving the convergence of above iterative scheme, we have to verify hypothesis \ref{h1}-\ref{h4}. Actually we have following lemma:
\begin{lem}\label{lem 5.1}
$(\tilde{a}_{ij})_{0\le i,j\le 3}$ and $\mathcal{B}$ satisfy all assumptions \ref{h1}-\ref{h4}.
\end{lem}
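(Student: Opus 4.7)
The plan is to verify $(H_1)$--$(H_4)$ for the linearized coefficients at any iterate $\tilde u_m+\psi$ that stays close enough to $u_b$. Smoothness and the stated functional dependence on $\mathrm{D}\Phi^-$, $\mathrm{D}u$, and $\mcw$ follow immediately from the explicit polynomial formulas \eqref{eq:a03}--\eqref{eq:a23} together with those listed in the appendix for the remaining $\tilde a_{ij}$, and from \eqref{eq:2-34} for $G$, since $p,\mathrm{D}p$ and $\kappa,\mathrm{D}\kappa$ are smooth in $\mathrm{D}\mcw$ and $y_3$. The stated background values in $(H_1)$ and the vanishing $b=b_2=b_3=0$ in $(H_2)$ come by direct substitution at $u=u_b$, $\mcw\equiv 0$, $\mathrm{D}\Phi=(q_+,0,0)$, $\mathrm{D}\Phi^-=(q_-,0,0)$: for the $\tilde a_{ij}$ this is exactly the computation already recorded in \eqref{eq:2-40}--\eqref{eq:2-43}, and for $b,b_2,b_3$ it follows by differentiating \eqref{eq:2-34} and using $\rho_-q_-=\rho_+q_+$ together with the constancy of the background fields.

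The main computational step is the vanishing on $\Gamma_w=\{y_3=0\}$ of $\tilde a_{03}$, $\tilde a_{13}$, $\tilde a_{23}$, the first-order coefficient $\tilde a_2$ displayed just above \eqref{eq:2-31}, and of $b_3$. On $\Gamma_w$ one has $x_3=\mcw(x_1,x_2)$, so the slip condition \eqref{eq:potential_bdry_wedge} forces the auxiliary quantity $d=\partial_{x_3}\Phi-\partial_{x_1}\Phi\partial_{x_1}\mcw-\partial_{x_2}\Phi\partial_{x_2}\mcw$ to vanish identically there. This immediately kills $\tilde a_{03}$ by \eqref{eq:a03}, and reduces $\tilde a_{13}$ and $\tilde a_{23}$ to expressions containing only $p$, its first derivatives, $\partial_{x_i}\mcw$, $\partial_{y_i}u$, and $c^2$. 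I then use the trace identities $p|_{y_3=0}=0$ together with the closed-form values of $\partial_{x_1}p,\partial_{x_2}p,\partial_{x_3}p$ on $\Gamma_w$ (each proportional to $\partial_{x_2}\mcw$), and the Neumann boundary value $\partial_{y_3}u=-\partial_{x_1}\mcw/(1+|\partial_{x_1}\mcw|^2+|\partial_{x_2}\mcw|^2)$ from \eqref{Neumann bdry condition of u on horizontal boundary}; the surviving terms collapse as an algebraic identity in $\partial_{x_1}\mcw$ and $\partial_{x_2}\mcw$. This cancellation is exactly why the auxiliary summand $p$ was introduced into \eqref{partial hodograph trans}: without it a residual term proportional to $\partial_{x_2}\mcw$ would survive in $\tilde a_{23}$ and obstruct the even/odd extension used in Section~\ref{sec:3}. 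The same recipe, applied to $\tilde a_2$ and to the Rankine--Hugoniot form of $b_3$ coming from \eqref{eq:2-34}, gives $\tilde a_2|_{\Gamma_w}=0$ and $b_3|_{\Gamma_w}=0$.

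Hypothesis $(H_3)$ follows inductively once the smallness threshold $\tilde\epsilon$ in Theorem~\ref{wellposedness of the NLP} is chosen so that every iterate stays within a $\delta$-ball of $u_b$ in $H^{n_0+3}$; the Sobolev embedding combined with the explicit form \eqref{4-2} of $\kappa$ reduces this to bounding $\|\tilde u_m\|$ and $\|\mcw\|_{W^{s_0+2,\infty}}$. The hardest verification is $(H_4)$: at $u=u_b$ one has to examine the three stability numbers $b_1$, $\tilde a_{11}b_0/b_1-r_{01}$ and $\sum_{i,j}r^{ij}(r_{11}b_i/b_1-r_{i1})(r_{11}b_j/b_1-r_{j1})$. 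The first two reduce, using \eqref{eq:2-40}--\eqref{eq:2-43} and $\rho_-q_-=\rho_+q_+$, to explicit positive multiples of the density jump and of the flow speeds. The third reduces, after an elementary but lengthy algebraic manipulation that also invokes the Bernoulli identity in \eqref{2.8}, to a positive multiple of $q_-\rho_+-q_+\rho_--\rho_+$; this is precisely the quantity assumed positive in \eqref{extra condition}, which is where that extra hypothesis enters the proof.
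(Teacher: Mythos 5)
Your treatment of \ref{h1}--\ref{h3} and of the boundary degeneracies is essentially the paper's own argument: the slip condition forces $d=0$ on $\Gamma_w$, which kills $\tilde a_{03}$ outright and strips $\tilde a_{13}$, $\tilde a_{23}$ down to residues that vanish because of two algebraic identities satisfied by the chosen $p$ on $\{x_3=\mcw\}$ (the paper isolates these as the condition forced by $\tilde a_{23}$ and the condition forced by $b_3$; the residue of $\tilde a_{13}$ is exactly the transformed slip condition and vanishes independently of $p$). That part of your plan is sound and matches the paper, modulo the minor point that the paper does not prove $\tilde a_2|_{\Gamma_w}=0$ but only that $\tilde a_2$ vanishes at the background state, and then moves $\tilde a_2\partial_{y_2}u$ to the source term in the iteration.

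The genuine gap is in your verification of \ref{h4}: you have placed the hypothesis \eqref{extra condition} on the wrong stability number. The third quantity in \ref{h4} is, at the background state, equal to $\frac{(q_--q_+)^2(c_+^2-q_+^2)}{c_+^2}\bigl(\frac{\tilde a_{11}}{b_1}b_0-\tilde a_{10}\bigr)^2$, i.e.\ a positive multiple of the \emph{square} of the second quantity; it is positive as soon as the second quantity is nonzero and the downstream flow is subsonic, and it imposes no new condition on $(\rho_\pm,q_\pm)$. It is the \emph{second} quantity, $\frac{\tilde a_{11}b_0}{b_1}-r_{01}$, that requires \eqref{extra condition}: a direct computation gives $\frac{\tilde a_{11}}{b_1}b_0-\tilde a_{10}=\frac{1}{(q_--q_+)\rho_+}\bigl(-q_+\rho_+-c_+^2(\rho_--\rho_+)-q_+(q_+-q_-)\rho_+\bigr)$, and replacing $c_+^2$ by $q_+^2$ (using $c_+^2>q_+^2$ and $\rho_-<\rho_+$) yields the lower bound $\frac{q_+}{(q_--q_+)\rho_+}(q_-\rho_+-q_+\rho_--\rho_+)$, whose positivity is precisely \eqref{extra condition}. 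Your claim that this second number is positive ``using \eqref{eq:2-40}--\eqref{eq:2-43} and $\rho_-q_-=\rho_+q_+$'' alone is false: for the family $q_-=\lambda$, $q_+=1$, $\rho_+=\lambda\rho_-$ with $\lambda$ slightly larger than $1$ (the example in the Remark following \eqref{extra condition}), conditions \eqref{2.8}--\eqref{2.9} hold but $\frac{\tilde a_{11}}{b_1}b_0-\tilde a_{10}$ is strictly negative. So as written your plan for \ref{h4} would fail at the second stability condition; the fix is simply to invoke \eqref{extra condition} there and to observe that the third condition then follows for free from the square structure.
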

\begin{proof}
    It is clear that $\tilde{a}_{ij}$ are smooth functions depending on $u$ and $\md u$. As a direct consequence of \eqref{eq:potential_bdry_wedge} and \eqref{eq:a03}, $\tilde{a}_{03}$ vanishes on $\Gamma_{w}$.
    In view of the slip boundary condition $\eqref{eq:potential_bdry_wedge}$ and \eqref{eq:a13}, it is clear that
\begin{align}
\tilde{a}_{13}|_{\scriptscriptstyle{\Gamma_w}}&=c^2\partial_{y_1}u(\partial_{x_1}\mcw(\partial_{x_1}\mcw\partial_{y_3}u-\partial_{x_1}p\partial_{y_2}u+1))\nonumber\\
&\quad+c^2\partial_{y_1}u(\partial_{x_2}\mcw(\partial_{x_3}p\partial_{x_2}\mcw\partial_{y_2}u+\partial_{x_2}\mcw\partial_{y_3}u-\partial_{y_2}u)+\partial_{x_3}p\partial_{y_2}u+\partial_{y_3}u)
\end{align}
But $\eqref{eq:potential_bdry_wedge}$ implies
$$
-\partial_{x_1}\mcw(\Phi^--\phi)_{x_1}-\partial_{x_2}\mcw(\Phi^--\phi)_{x_2}+(\Phi^--\phi)_{x_3}=0
$$
on $\Gamma_0$, so by the slip boundary condition of $\Phi^-$ and the expressions of $\phi_{x_i}$ ($i=1,2,3$) given by \eqref{Dphi in terms of Du}, it is equivalent to say that
\begin{align}\label{horizontal boundary condition for u}
    &\partial_{x_1}\mcw(\partial_{x_1}\mcw\partial_{y_3}u-\partial_{x_1}p\partial_{y_2}u+1)\nonumber\\
    &\qquad+\partial_{x_2}\mcw(\partial_{x_3}p\partial_{x_2}\mcw\partial_{y_2}u+\partial_{x_2}\mcw\partial_{y_3}u-\partial_{y_2}u)\nonumber\\
    &\qquad+\partial_{x_3}p\partial_{y_2}u+\partial_{y_3}u=0\quad\mbox{on}\quad \Gamma_{w}.
\end{align}
 Thus, independent of the choice of $p(\mathbf{x})$, one deduces that $\tilde{a}_{13}$ vanishes on $\Gamma_w$. Also by the slip boundary condition \eqref{eq:potential_bdry_wedge}, we deduce that
\begin{equation}
    \tilde{a}_{23}|_{\scriptscriptstyle{\Gamma_{w}}}=c^2(\partial_{y_1}u)^2(\partial_{x_1}\mcw\partial_{x_1}p+\partial_{x_2}\mcw(\partial_{x_2}p+1)-\partial_{x_3}p).
\end{equation}
So requiring $\tilde{a}_{23}=0$ on $\Gamma_w$ is equivalent to require
\begin{equation}\label{condition 1 for p}
    \partial_{x_1}\mcw\partial_{x_1}p+\partial_{x_2}\mcw(\partial_{x_2}p+1)-\partial_{x_3}p=0\quad \mbox{on} \quad\Gamma_w.
\end{equation}
With $G$ given in \eqref{eq:2-34}, by simple calculation, we have
\[
G=(\rho^+-\rho^-)(\phi_t+\nabla\phi\cdot\nabla\Phi^-)-\abs{\nabla\phi}^2\rho^+,
\]
where $\rho^{\pm}=((\gamma-1)(B_0-\Phi^{\pm}_t-\frac{1}{2}\abs{\nabla\Phi^{\pm}}^2)+1)^{\frac{1}{\gamma-1}}$ and $\Phi^+\defs\Phi$ is the velocity potential ahead of the shock front.
Replacing $\md \phi$ in $G$ by $\md u$ via \eqref{Dphi in terms of Du}, then differentiate $G $ with respect to $u$ and $u_{y_i}$, respectively, one can obtain the expressions of $b$ and $b_j$ $(j=0,1,2,3)$.

By simple calculation, one has
\begin{align}
    b_3&=\frac{\partial \rho^+}{\partial (\partial_{y_3}u)}(\phi_t+\nabla\phi\cdot\nabla\Phi^-)+(\rho^+-\rho^-)\frac{\partial}{\partial (\partial_{y_3}u)}(\phi_t+\nabla\phi\cdot\nabla\Phi^-)\nonumber\\
    &=\mathcal{E}_1\cdot(\phi_t+\nabla\phi\cdot\nabla\Phi^-)+(\rho^+-\rho^-)\cdot\mathcal{E}_2
\end{align}
where
\begin{align*}
    &\mathcal{E}_1=-\dfrac{(\partial_{x_3}p-\partial_{x_1}p\partial_{x_1}\mcw+\partial_{x_3}p|\partial_{x_2}\mcw|^2-\partial_{x_2}\mcw)\partial_{y_2}u}{(\rho^+)^{\gamma-2}(\partial_{y_1}u)^2}\\
    &\qquad+\frac{(1+|\partial_{x_1}\mcw|^2+|\partial_{x_2}\mcw|^2)\partial_{y_3}u+\partial_{x_1}\mcw}{(\rho^+)^{\gamma-2}(\partial_{y_1}u)^2}\\
    &\qquad-\dfrac{\partial_{x_1}\mcw\partial_{x_1}\Phi^-+\partial_{x_2}\mcw\partial_{x_2}\Phi^--\partial_{x_3}\Phi^-}{\partial_{y_1}u},\\
    &\mathcal{E}_2=\dfrac{\partial_{x_1}\mcw\partial_{x_1}\Phi^-+\partial_{x_2}\mcw\partial_{x_2}\Phi^--\partial_{x_3}\Phi^-}{\partial_{y_1}u}.
\end{align*}

Since $\Phi^-$ satisfies \eqref{eq:potential_bdry_wedge} on $\Gamma_{w}$ (equivalently on $\{y_3=0\}$) and $u$ satisfies \eqref{Neumann bdry condition of u on horizontal boundary} on $\{y_3=0\}$, in order to let $b_3|_{y_3=0}=0$, it suffices to require
\begin{equation}\label{condition 2 for p}
    \partial_{x_3}p(1+|\partial_{x_2}\mathcal{W}|^2)-\partial_{x_1}p\partial_{x_1}\mcw-\partial_{x_2}\mcw=0\quad\mbox{on}\quad \Gamma_w.
\end{equation}
It is easy to verify that \eqref{condition 1 for p} and \eqref{condition 2 for p} are satisfied,
if we let
\begin{equation}
  p(x)\defs\frac{\partial_{x_2}\mcw}{1+|\partial_{x_1}\mcw|^2+|\partial_{x_2}\mcw|^2}(x_3-\mcw).
\end{equation}
With such $p(\mbx)$, by calculating the Jacobian of $\mathscr{P}$, one can easily check that $\mathscr{P}$ is indeed invertible, when $u$ is close to $u_b$, $\partial_{x_2}\mathcal{W}$ is small, and $x_3$ is close to $\mathcal{W}(x_1,x_2)$ (this means $y_3$ is small in ($y_0,\mby$)-coordinate).
Here we do not need the exact expression of $b_0$, $b$ and $b_j$ $(j=0,1,2)$. At the background solution $(u,\mcw)=(u_b,0)$, one has
\begin{align}
    b_0\defs\frac{\partial{G}}{{\partial u_{y_0}}}=&(q_--q_+)\left(-\frac{\rho_+q_+}{c_+^2}(q_--q_+)-(\rho_+-\rho_-)\right)<0,\\
   b_1\defs\frac{\partial{G}}{{\partial (\partial_{y_1}u})}=&(q_--q_+)^2\left(-\frac{q_+^2\rho_+}{c_+^2}(q_--q_+)+\rho_+(q_--q_+)\right)>0,
\end{align}
and
\[ b_2\defs\frac{\partial{G}}{{\partial (\partial_{y_2}u})}=0,\quad
   b_3\defs\frac{\partial{G}}{{\partial (\partial_{y_3}u})}=0,\quad
   b\defs\frac{\partial{G}}{{\partial u}}=0.
\]
Moreover, with the choice of $p(\mbx)$, one can see that $\tilde{a}_2$ is zero at the background solution. Hence $\tilde{a}_2$ is close to zero near the background solution. This allows us to put the term $\tilde{a}_2\partial_{y_2}u$ to the right side in the coming iteration scheme, so that the coefficient before $\partial_{y_2}u$ be zero. Then above computations together with \eqref{eq:2-40}-\eqref{eq:2-43} implies \ref{h1} and \ref{h2} are fulfilled.
 We still need to verify \ref{h4}. It is clear that $b_1$ is bounded away from zero when $u$ is sufficiently close to $u_b$. By simple calculation, we have
\begin{align}
   \frac{\tilde{a}_{11}}{b_1}b_0-\tilde{a}_{10}&=-\frac{q_+}{q_--q_+}+\frac{(c_+^2-q_+^2)(c_+^2(\rho_--\rho_+)+q_+(q_+-q_-)\rho_+)}{(q_--q_+)\rho_+q_+^2-c_+^2(-2q_+\rho_++q_-(\rho_-+\rho_+))}\nonumber\\
   &=-\frac{q_+}{q_--q_+}+\frac{(c_+^2-q_+^2)(c_+^2(\rho_--\rho_+)+q_+(q_+-q_-)\rho_+)}{(q_+^2-c_+^2)(q_--q_+)\rho_+}\nonumber\\
   &=-\frac{q_+}{q_--q_+}-\frac{c_+^2(\rho_--\rho_+)+q_+(q_+-q_-)\rho_+}{(q_--q_+)\rho_+}\nonumber\\
   &=\frac{1}{(q_--q_+)\rho_+}(-q_+\rho_+-c_+^2(\rho_--\rho_+)-q_+(q_+-q_-)\rho_+)\nonumber\\
   &>\frac{1}{(q_--q_+)\rho_+}(-\rho_+q_+-q_+^2(\rho_--\rho_+)-q_+(q_+-q_-)\rho_+)\nonumber\\
   &=\frac{q_+}{(q_--q_+)\rho_+}(q_-\rho_+-q_+\rho_--\rho_+)>0.
\end{align}
Moreover, at the background solution we have
\[
\sum_{i,j=0}^3\tilde{a}^{ij}(\tilde{a}_{11}\frac{b_i}{b_1}-\tilde{a}_{i1})(\tilde{a}_{11}\frac{b_j}{b_1}-\tilde{a}_{j1})=\frac{(q_--q_+)^2(c_+^2-q_+^2)}{c_+^2} (\frac{\tilde{a}_{11}}{b_1}b_0-\tilde{a}_{10})^2>0.
\]
So $\gamma_0$ exists and hence \ref{h4} is satisfied. For \ref{h3}, we can see from our proof of the convergence of the scheme that the solution $\tilde{u}+\psi$ is still close to $u_b$.
\end{proof}

For $s\in \mathbb{N} $, let
 \[
 \eta\sum_{|\alpha|\le s}\|e^{-\eta t}\md^\alpha u\|^2_{L^2(\Omega_T)}\defs\|u\|^2_{\mch^s_\eta(\Omega_T)}
 \]
and denote by $\|u\|_{s,\eta,T}$ the usual Sobolev norm $\|e^{-\eta t}u\|_{H^{s}(\Omega_T)}$. Furthermore, for simplification, one may use $\|u|_{y_1=0}\|_{s,\eta,T}$ to represent the usual Sobolev norm on the boundary $\{y_1=0\}$.
 \begin{lem}\label{lem:4.4}
For any smooth function $u$ and any $\eta\ge 1$, we have
\begin{equation}\label{5.27}
\|e^{-\eta t}u\|^2_{H^s(\Omega_T)} \lesssim \frac{1}{\eta}\|u\|^2_{\mch^s_\eta(\Omega_T)},
\end{equation}
provided that $\partial^j_{t}u=0,{\ }j=0,1,\cdots,s-1.$ Here $\|e^{-\eta t}u\|_{H^s(\Omega_T)}$ is the standard sobolev norm.
\end{lem}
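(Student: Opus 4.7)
The plan is to reduce the claim to a pointwise-in-space Hardy-type inequality in the time variable, which converts each factor of $\eta$ produced by differentiating $e^{-\eta t}$ into a corresponding time derivative of $u$, at the cost of using the vanishing condition $\partial_t^j u|_{t=0}=0$ for $j=0,\dots,s-1$.

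First I would write $\mathrm{D}^\alpha=\partial_t^{\alpha_0}\nabla_y^{\alpha'}$ with $\alpha_0+|\alpha'|\le s$, and apply Leibniz to $\partial_t^{\alpha_0}(e^{-\eta t}\nabla_y^{\alpha'}u)$ to get
\[
\|\mathrm{D}^\alpha(e^{-\eta t}u)\|_{L^2(\Omega_T)}^2\lesssim\sum_{k=0}^{\alpha_0}\eta^{2k}\bigl\|e^{-\eta t}\partial_t^{\alpha_0-k}\nabla_y^{\alpha'}u\bigr\|_{L^2(\Omega_T)}^2.
\]
Spatial derivatives commute through $e^{-\eta t}$ and produce no $\eta$-factors, so the only issue is controlling the $\eta^{2k}$ in front of the lower-order time derivatives.

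The core lemma I would establish is the following Hardy-type estimate: if $w\in C^1([0,T];L^2(\Omega))$ satisfies $w|_{t=0}=0$, then
\[
\eta^2\|e^{-\eta t}w\|_{L^2(\Omega_T)}^2\le\|e^{-\eta t}\partial_t w\|_{L^2(\Omega_T)}^2.
\]
This follows from integrating $\int_0^T e^{-2\eta t}w^2\,dt=\frac{1}{\eta}\int_0^T e^{-2\eta t}w\,\partial_t w\,dt-\frac{e^{-2\eta T}}{2\eta}w(T,\cdot)^2$ pointwise in the spatial variable, then applying Cauchy--Schwarz and Young's inequality, and finally integrating over $\Omega$; the boundary term at $t=T$ has a favourable sign and may be discarded.

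Next, for each $k$ with $1\le k\le\alpha_0$ I would iterate this Hardy estimate $k$ times on the function $v:=\partial_t^{\alpha_0-k}\nabla_y^{\alpha'}u$. The iteration requires $\partial_t^m v|_{t=0}=0$ for $m=0,\dots,k-1$, equivalently $\partial_t^{\alpha_0-k+m}u|_{t=0}=0$ for those $m$; the largest exponent involved is $\alpha_0-1\le s-1$, which is supplied by the standing hypothesis. This yields
\[
\eta^{2k}\bigl\|e^{-\eta t}\partial_t^{\alpha_0-k}\nabla_y^{\alpha'}u\bigr\|_{L^2(\Omega_T)}^2\lesssim\bigl\|e^{-\eta t}\mathrm{D}^\alpha u\bigr\|_{L^2(\Omega_T)}^2,
\]
so that $\|\mathrm{D}^\alpha(e^{-\eta t}u)\|_{L^2(\Omega_T)}^2\lesssim\|e^{-\eta t}\mathrm{D}^\alpha u\|_{L^2(\Omega_T)}^2$ uniformly in $\eta\ge 1$. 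Summing over $|\alpha|\le s$ gives $\|e^{-\eta t}u\|_{H^s(\Omega_T)}^2\lesssim\sum_{|\alpha|\le s}\|e^{-\eta t}\mathrm{D}^\alpha u\|_{L^2(\Omega_T)}^2=\frac{1}{\eta}\|u\|_{\mathscr{H}^s_\eta(\Omega_T)}^2$, which is the claim.

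There is no real obstacle; the only point to be careful about is the bookkeeping of the vanishing conditions required at each step of the iterated Hardy inequality, and verifying that the largest time-order that must vanish at $t=0$ never exceeds $s-1$. An approximation argument (e.g.\ extending $u$ by zero for $t<0$ or mollifying in $t$) justifies the integration by parts when $u$ is merely in the Sobolev class rather than smooth.
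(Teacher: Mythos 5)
Your proposal is correct and follows essentially the same route as the paper: the weighted Hardy-type inequality $\eta^2\|e^{-\eta t}w\|_{L^2}^2\le\|e^{-\eta t}\partial_t w\|_{L^2}^2$ obtained by integrating by parts in $t$ and using $w|_{t=0}=0$, iterated and combined with the Leibniz expansion of $\mathrm{D}^\alpha(e^{-\eta t}u)$, is exactly the paper's recursive formula \eqref{recursive formula} together with \eqref{induction1}--\eqref{induction3}. The only cosmetic difference is that the paper packages the argument as an induction on the order $s$, whereas you handle each multi-index directly; your bookkeeping of the required vanishing orders (at most $\alpha_0-1\le s-1$) matches the paper's hypothesis.
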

\begin{proof}
Let
$$
A(y_0)=\int_0^{y_0}\int_{\Omega}e^{-2\eta t}|u|^2\mathrm{d}y\mathrm{d}t
$$
Then we have
\begin{align*}
A(y_0)&=-\frac{1}{2\eta}\int_0^{y_0}\!\!\int_{\Omega}(e^{-2\eta t})_t|u|^2\mathrm{d}y\mathrm{d}t \\
&=-\frac{1}{2\eta}\int_0^{y_0}\!\!\int_{\Omega}(e^{-2\eta t}|u|^2)_t-2e^{-2\eta t}u\partial_tu\mathrm{d}y\mathrm{d}t\\
&=-\frac{1}{2\eta}\int_{\Omega}e^{-2\eta y_0}|u(y_0,\cdot)|^2\mathrm{d}y+\frac{1}{2\eta}\int_0^{y_0}\int_{\Omega}2e^{-2\eta t}u\partial_tu\mathrm{d}y\mathrm{d}t\\
&\le -\frac{1}{2\eta}\int_{\Omega}e^{-2\eta y_0}|u(y_0,\cdot)|^2\mathrm{d}y+\frac{1}{2\eta}\int_0^{y_0}\int_{\Omega}e^{-2\eta t}(\eta |u|^2+\frac{1}{\eta}|\partial_tu|^2)\mathrm{d}y\mathrm{d}t\\
&= -\frac{1}{2\eta}\int_{\Omega}e^{-2\eta y_0}|u(y_0,\cdot)|^2\mathrm{d}y+\frac{1}{2}A(y_0)+\frac{1}{2\eta^2}\int_0^{y_0}\int_{\Omega}e^{-2\eta t}|\partial_tu|^2\mathrm{d}y\mathrm{d}t.
\end{align*}
Here for the third identity, we have used the assumption that $u|_{t=0}=0$.
This implies
\begin{equation}\label{estimate of interior termx}
\eta A(y_0)+\int_{\Omega}e^{-2\eta y_0}|u(y_0,\cdot)|^2\mathrm{d}y\le \frac{1}{\eta}\int_0^{y_0}\int_{\Omega}e^{-2\eta t}|\partial_tu|^2\mathrm{d}y\mathrm{d}t.
\end{equation}
In particular,
\begin{equation}\label{estimate of interior term}
\eta^2 A(y_0)\le \int_0^{y_0}\int_{\Omega}e^{-2\eta t}|\partial_tu|^2\mathrm{d}y\mathrm{d}t\le\int_0^{y_0}\int_{\Omega}e^{-2\eta t}|\mathrm{D}u|^2\mathrm{d}y\mathrm{d}t.
\end{equation}

It follows from $\eqref{estimate of interior termx}$ that
\begin{equation}
\begin{split}
&\int_0^{y_0}\int_{\Omega}e^{-2\eta t}|\mathrm{D}u|^2\mathrm{d}y\mathrm{d}t+
\eta^2\int_0^{y_0}\int_{\Omega}e^{-2\eta t}|u|^2\mathrm{d}y\mathrm{d}t+
\int_0^{y_0}\int_{\Omega}e^{-2\eta t}|u|^2\mathrm{d}y\mathrm{d}t\\
\lesssim& \int_0^{y_0}e^{-2\eta t}(\|u\|^2_{L^2(\Omega)}+\|\mathrm{D}u\|^2_{L^2(\Omega)})\mathrm{d}t.
\end{split}
\end{equation}

This implies that  $\|u\|^2_{1,\eta,T}\lesssim \frac{1}{\eta}\|u\|^2_{\mathcal{H}^1_\eta(\Omega_T)}$.

Now for $k\in\mathbb{N}$, assume
\begin{equation}\label{4.31}
\|u\|^2_{k,\eta,T}\lesssim \frac{1}{\eta}\|u\|^2_{\mathcal{H}^k_\eta(\Omega_T)}.
\end{equation}

We are going to show
\begin{equation}\label{4.32}
\|u\|^2_{k+1,\eta,T}\lesssim \frac{1}{\eta}\|u\|^2_{\mathcal{H}^{k+1}_\eta(\Omega_T)}.
\end{equation}

Repeating the process for estimate \eqref{estimate of interior term} above $m$ times where $|u|^2$ in $A(y_0)$ is replaced by $|\md^nu|^2$, we have
\begin{equation}\label{recursive formula}
\int_0^{y_0}e^{-2\eta t}\|\mathrm{D}^{n}u\|^2_{L^2(\Omega)}\mathrm{d}t\le \eta^{-2m}\int_0^{y_0}e^{-2\eta t}\|\mathrm{D}^{m+n}u\|^2_{L^2(\Omega)}\mathrm{d}t
\end{equation}
provided that $\partial^l_tu|_{t=0}=0,\, l=0,1,2,\cdots, m+n-1.$

Note that
\begin{equation}\label{induction1}
\|u\|^2_{k+1,\eta,T}= \|u\|^2_{k,\eta,T}+ \sum_{|\alpha|=k+1}\|\mathrm{D}^{\alpha}(e^{-\eta t}u)\|_{L^2(\Omega_T)}^2
\end{equation}
and
\begin{align}\label{induction2}
\sum_{|\alpha|= k+1}\|\mathrm{D}^{\alpha}(e^{-\eta t}u)\|^2_{L^2(\Omega_T)}&=\sum_{l_1+l_2=k+1}\|(-\eta)^{l_1}e^{-\eta t}\mathrm{D}^{l_2}u\|^2_{L^2(\Omega_T)}\nonumber\\
&=\sum_{l_1+l_2=k+1}(\eta)^{2l_1}\|e^{-\eta t}\mathrm{D}^{l_2}u\|^2_{L^2(\Omega_T)}.
\end{align}
So by $\eqref{recursive formula}$, we have
\begin{align}\label{induction3}
\sum_{l_1+l_2=k+1}(\eta)^{2l_1}\|e^{-\eta t}\mathrm{D}^{l_2}u\|^2_{L^2(\Omega_T)}&\le \sum_{l_1+l_2=k+1}\|e^{-\eta t}\mathrm{D}^{l_1+l_2}u\|^2_{L^2(\Omega_T)}\nonumber\\
&=\sum_{|\alpha|=k+1}\|e^{-\eta t}\mathrm{D}^{\alpha}u\|^2_{L^2(\Omega_T)}.
\end{align}
From \eqref{4.31}, $\eqref{induction1}$--$\eqref{induction3}$, we obtain \eqref{4.32}.
Therefore, we derive the estimate \eqref{5.27} for any $s\in \mathbb{N}$ by the induction method.
\end{proof}

\begin{lem}[Boundedness in the norm of high regularity]\label{lem-5.3}
 Under the assumption of theorem \ref{wellposedness of the NLP}, there exists a large $\eta_*\ge 1$ and a small $T_*>0$ and small $\epsilon_0>0$, such that for all $\eta\ge \eta_*$ and $T\le T_*$, the following estimate
\begin{equation}\label{high norm boundedness}
\|\tilde{u}_m\|^2_{s,\eta,T}+\sum_{|\alpha|\le s}\sup_{0\le t\le T}\|\md ^\alpha \tilde{u}_m(t,\cdot)\|^2_{L^2(\Omega)}+\sum_{|\alpha|\le s}\|e^{-\eta t} \md^\alpha\tilde{u}_m\|_{L^2(\omega^\ell_T)}^2\le \epsilon_0^2
 \end{equation}
 holds for all $m\ge 0$.
\end{lem}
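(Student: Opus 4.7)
The plan is to proceed by induction on $m$. The base case $m=0$ is immediate since $\tilde{u}_0 \equiv 0$. For the inductive step, assume \eqref{high norm boundedness} for $\tilde{u}_m$ with an $\epsilon_0$ to be chosen, and then apply Theorem \ref{well-posedness and energy estimate of the LP} to the linear problem \eqref{eq:5.1} to produce $\tilde{u}_{m+1}$ and estimate it. The first task is to verify hypotheses \ref{h1}--\ref{h4} for the coefficients $\tilde{a}_{ij}^m$ and boundary operator $\mathcal{B}$ of the linearized problem \eqref{eq:5.1}. Lemma \ref{lem 5.1} establishes this at the background solution $u_b$; by the Sobolev embedding $H^{s_0+1}\hookrightarrow C^{[(s_0-1)/2]}$ and the induction hypothesis with $\epsilon_0$ small (together with smallness of $\epsilon$ in $\psi$), the coefficients at $u = \kappa(\tilde{u}_m + \psi, y_2, y_3)$ remain a small perturbation of those at $u_b$, so \ref{h1}--\ref{h4} persist with a slightly smaller $\gamma_0$.

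Next I would carefully bound the forcing and boundary data of \eqref{eq:5.1}. The source
\[
f_{m+1} := \frac{1}{\partial_{\bar{u}}\kappa}\bigl(F_m - \partial_{\bar{u}}\kappa\, \tilde{a}_{ij}^m\partial_{ij}\psi\bigr)
\]
is composed of: (i) $F_m$, which is a polynomial in $(\tilde{u}_m + \psi, \md(\tilde{u}_m + \psi))$ whose \emph{affine part in $\tilde{u}_m$ vanishes} once the background is subtracted, so Moser-type composition estimates yield $\|f_{m+1}\|_{s-1,\eta,T} \lesssim (\epsilon + \epsilon_0)\bigl(\|\tilde{u}_m\|_{s,\eta,T} + \|\psi\|_{H^s}\bigr) + \epsilon$; and (ii) a linear-in-$\psi$ piece whose size is controlled by the data smallness. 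For the boundary term $g_{m+1} = \mathcal{B}\tilde{u}_m - G_m$, the key observation is that $\mathcal{B}$ is precisely the linearization at $u_b$ of $u\mapsto G(\kappa(u,\cdot), \md\kappa(u,\cdot))$, so Taylor expansion around $u_b$ gives
\[
G_m = \mathcal{B}(\tilde{u}_m + \psi - u_b) + R_m,
\]
with $R_m$ quadratic in the total perturbation $(\tilde{u}_m + \psi - u_b)$. Hence $g_{m+1} = -\mathcal{B}(\psi - u_b) - R_m$, and the trace theorem plus Moser estimates yield $\|g_{m+1}\|_{H^{s-1}(\omega^\ell_T)} \lesssim \epsilon + (\epsilon + \epsilon_0)\,\|\tilde{u}_m\|_{s,\eta,T}$. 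Crucially, the compatibility conditions of the linear problem up to order $s_0+1$ can be verified by induction on $m$: since $\tilde{u}_m$ and $\psi$ both satisfy the compatibility conditions at $t=0$ inherited from Remark \ref{rem:2.1} and the construction of $\psi$, the iteration preserves them, and in particular $\partial_t^k f_{m+1}|_{t=0}=0$ for $k\le n_0+2$.

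Plugging these bounds into \eqref{ineq: LP energy estimate} applied to $w = \tilde{u}_{m+1}$ with $s = s_0+1$ gives, schematically,
\[
\|\tilde{u}_{m+1}\|_{s_0+1,\eta,T}^2 + \sup_{t\le T}\|\md^{s_0+1}\tilde{u}_{m+1}(t,\cdot)\|_{L^2}^2 + \|\md^{s_0+1}\tilde{u}_{m+1}\|_{L^2(\omega^\ell_T)}^2
\]
\[
\lesssim \frac{e^{2\eta T}}{\eta}\bigl(\epsilon + \epsilon_0\bigr)^4 + \frac{1}{\eta}\bigl(\epsilon + \epsilon_0^2\bigr)^2 + \bigl(\epsilon + \epsilon_0^2\bigr)^2,
\]
after absorbing the factor $\|e^{-\eta t}u\|_{H^s}^2$ in the first term of \eqref{ineq: LP energy estimate} using the induction hypothesis. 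Choose first $\eta_* \ge 1$ large, then $T_* \le 1/\eta_*$ so that $e^{2\eta T}/\eta \le 2/\eta$, and finally $\tilde{\epsilon}$ so small that $\epsilon \ll \epsilon_0^2$; then the right-hand side is bounded by $\epsilon_0^2$, closing the induction.

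The main technical obstacle is the appearance of $\mathcal{B}\tilde{u}_m$ (a boundary $H^{s-1}$-norm of first-order derivatives of $\tilde{u}_m$) on the right-hand side of the inhomogeneous boundary condition for $\tilde{u}_{m+1}$: this term lives at the same order of regularity as the quantity being estimated, and naively it cannot be absorbed by smallness alone. The resolution is precisely the \emph{sharp} boundary trace control $\|e^{-\eta t}\md^\alpha w\|_{L^2(\omega^\ell_T)}$ included on the left-hand side of \eqref{ineq: LP energy estimate}, which appears without any derivative-loss thanks to the reformulation of Section \ref{sec:4} (emphasised in the introduction as the avoidance of Nash--Moser). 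Combined with the Taylor-expansion identity above that turns the dangerous linear-in-$\tilde{u}_m$ part of $G_m$ into the supplied $\mathcal{B}\tilde{u}_m$, this lets the quadratic remainder $R_m$ carry the smallness needed to close the iteration in a fixed-regularity Sobolev space.
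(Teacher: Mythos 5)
Your proposal is correct and follows essentially the same route as the paper: induction on $m$, verification that \ref{h1}--\ref{h4} persist near $u_b$ via Lemma \ref{lem 5.1} and Sobolev embedding, the Taylor expansion of $G_m$ about $u_b$ using $G(u_b,\md u_b)=0$ so that $\mathcal{B}\tilde{u}_m-G_m=\mathcal{B}(u_b-\psi)+(\text{quadratic remainder})$, and then the linear estimate \eqref{ineq: LP energy estimate} closed by choosing $\eta$ large and $T$, $\epsilon$, $\epsilon_0$ small. The only differences are cosmetic bookkeeping in the intermediate bounds (e.g.\ your $\epsilon\ll\epsilon_0^2$ versus the paper's $\epsilon\le\min(\sqrt{1/(16C)}\,\epsilon_0,\epsilon_0)$), plus your explicit remark on propagating the compatibility conditions, which the paper leaves implicit.
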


\begin{proof}
 We prove this lemma by induction. Suppose \eqref{high norm boundedness} holds for all $m\le n$, we proceed to show it also holds true for $m=n+1$.
 In view of \eqref{eq:5.1}, in order to apply theorem \ref{well-posedness and energy estimate of the LP} to $\tilde{u}_{n+1}$, we need to estimate the source terms. By the definition of $F_n$, we know that
\begin{equation}\label{4-32}
 \|F_n\|^2_{s-1,\eta,T}\lesssim C'\epsilon_0^2\|\tilde{u}_n\|^2_{s,\eta,T}.
 \end{equation}
 Similarly, we have
 \begin{align}
    \|\tilde{a}_{ij}^n\partial_{ij}\psi\|^2_{s-1,\eta, T}\le C''\|\tilde{u}_n\|^2_{s,\eta,T}.\label{4-33}
\end{align}
For the boundary term, noticing that $G(u_b,\md u_b)=0$, we have
\begin{align}
    \mathcal{B}\tilde{u}_n-G_n&=\mathcal{B}\tilde{u}_n-(G(\tilde{u}_n+\psi,\mathrm{D}(\tilde{u}_n+\psi)-G(u_b,\mathrm{D}u_b))\nonumber\\
    &=\mathcal{B}\tilde{u}_n-\mathcal{B}(\tilde{u}_n+\psi-u_b)+A_n^\top\mathrm{D}^2 G|_{u=u_b+\theta (\tilde{u}_n+\psi-u_b)}A_n\nonumber\\
    &=\mathcal{B}(u_b-\psi)+A_n^\top\mathrm{D}^2 G|_{u=u_b+\theta (\tilde{u}_n+\psi-u_b)}A_n,
\end{align}
where $A_n=(\tilde{u}_n+\psi-u_b,\md(\tilde{u}_n+\psi-u_b))$.
Hence we deduce that
\begin{align}
    \sum_{|\alpha|\le s-1}\!\!\|e^{-\eta t}&\md^\alpha(\mathcal{B}\tilde{u}_n-G_n)\|^2_{L^2(\omega^\ell_T)}\nonumber\\
    &\lesssim \sum_{|\alpha|\le s}\|e^{-\eta t}\md^\alpha(\psi-u_b)\|^2_{L^2(\omega^\ell_T)}+\|e^{-\eta t}\tilde{u}_n|_{y_1=0}\|^4_{H^s(\omega^\ell_T)}.\label{4-35}
\end{align}

 By theorem \ref{well-posedness and energy estimate of the LP} and lemma \ref{lem:4.4} and in view of \eqref{4-32}, \eqref{4-33}, and \eqref{4-35}, we deduce that

 \begin{align}
    \|\tilde{u}_{n+1}\|^2_{s,\eta,T}&+\sup_{0\le t\le T}\|\md^\alpha \tilde{u}_{n+1}(t,\cdot)\|^2_{L^2(\Omega)}+\|e^{-\eta t}\tilde{u}_{n+1}|_{y_1=0}\|^2_{s,\eta,T}\nonumber\\
    \le&\frac{C}{\eta^2}e^{4\eta T}\|\tilde{u}_n+\psi\|^2_{s,\eta,T}\cdot(\|F_n\|^2_{3,\eta,T}+\|\tilde{a}^n_{ij}\partial_{ij}\psi\|^2_{3,\eta,T})\nonumber\\
    &+\frac{C}{\eta}e^{2\eta T}\|F_n\|_{s-1,\eta,T}^2+e^{2\eta T}\|(\mathcal{B}\tilde{u}_n-G_n)|_{y_1=0}\|^2_{s-1,\eta,T}\nonumber\\
    \le&\frac{C}{\eta}e^{4\eta T}(\epsilon_0^2+\|\psi\|^2_{s,\eta,T})\cdot(C^\prime \epsilon_0^4+C^{\prime\prime}\epsilon_0^2)\nonumber\\
    &+\frac{C}{\eta}e^{2\eta T}\epsilon_0^2+Ce^{2\eta T}(\epsilon^2+\epsilon_0^4).
\end{align}
Now let $\eta_*$ be properly large such that $\frac{C}{\eta_*}(C^\prime+C^{\prime\prime}+1)<\frac{1}{16}$. Then let $\epsilon_0$ be small such that $\epsilon_0^2<\min(\frac12,\sqrt{\frac{1}{8C}})$. Finally let $T_*$ and the $\epsilon$ in theorem 2.1 be properly small such that $e^{4\eta_* T_*}<2$ and $ \|\psi\|^2_{s,\eta_* ,T_*}<\epsilon_0^2$ and $\epsilon\le \min(\sqrt{\frac{1}{16C}}\epsilon_0,\epsilon_0)$. We obtain
\[
\|\tilde{u}_{n+1}\|^2_{s,\eta_*,T_*}+\sup_{0\le t\le T_*}\|\md^\alpha \tilde{u}_{n+1}(t,\cdot)\|^2_{L^2(\Omega)}+\|\tilde{u}_{n+1}|_{y_1=0}\|^2_{s,\eta_*,T_*}<\frac12\epsilon_0^2+2\times\frac18\epsilon_0^2+\frac{1}{4}\epsilon^2_0=\epsilon_0^2.
\]
This implies $\eqref{high norm boundedness}$ also holds for $m=n+1$.  It is clear that \eqref{high norm boundedness} holds for $m=0$. This completes the proof of this lemma.
 \end{proof}

 It is easy to check that $v_{m+1}\defs\tilde{u}_{m+1}-\tilde{u}_m$ satisfies following initial boundary value problem
 \begin{equation}
\left\{\!\!
\begin{array}{ll}
     \partial_{\bar{u}}\kappa\sum_{i,j=0}^3\limits\tilde{a}^m_{ij}\partial_{ij}v_{m+1}=F_{m}-F_{m-1}-\partial_{\bar{u}}\kappa(\tilde{a}_{ij}^m-\tilde{a}_{ij}^{m-1})\partial_{ij}(\tilde{u}_{m}+\psi)\quad&\mbox{in}\quad \Omega_T,\\
     \mathcal{B}v_{m+1}=\mathcal{B}v_{m}-(G_{m}-G_{m-1})\quad&\mbox{on} \quad\omega^\ell_T,\\
     \partial_{y_3}v_{m+1}=0\quad&\mbox{on} \quad\omega^r_T,\\
     (v_{m+1},\partial_{y_0}v_{m+1})|_{y_0=0}=(0,0)\quad&\mbox{on}\quad \Gamma_{in}.
\end{array}
\right.
\end{equation}
For the sequence $\{v_m\}_{m=1}^\infty$, we have following lemma:
\begin{lem}[Contraction in the norm of low regularity]\label{lem-5.4}
   Under the assumption of theorem \ref{wellposedness of the NLP} and suppose the $\epsilon$ in \ref{wellposedness of the NLP} is small enough, then there exist two constants $\eta_{**}\ge 1$ and $T_{**}>0$ such that
   \begin{align}
    &\|v_{m+1}\|^2_{1,\eta_{**},T_{**}}+\sup_{0\le t\le T_{**}}\|\md^\alpha v_{m+1}(t,\cdot)\|^2_{L^2(\Omega)}+\|v_{m+1}|_{y_1=0}\|^2_{1,\eta_{**}, T_{**}}\nonumber\\
    &\quad\le \sigma_0\cdot(\|v_{m}\|^2_{1,\eta_{**},T_*}+\sup_{0\le t\le T_{**}}\|\md^\alpha v_{m}(t,\cdot)\|^2_{L^2(\Omega)}+\|v_{m}|_{y_1=0}\|^2_{1,\eta_{**},T_{**}})\label{low norm contraction}
\end{align}
   hold for all $m\ge 0$, where $0<\sigma_0<1$ is a constant independent of $m$.
\end{lem}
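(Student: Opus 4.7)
The plan is to derive a first-order energy estimate for the problem satisfied by $v_{m+1} = \tilde{u}_{m+1} - \tilde{u}_m$ and to then bound the new source and boundary data by $v_m$ with a small prefactor coming from $\epsilon_0$. First, Lemma~\ref{lem 5.1} together with the boundedness from Lemma~\ref{lem-5.3} and Sobolev embedding (assuming $s_0$ large enough that $H^{s_0+1}\hookrightarrow W^{2,\infty}$) shows that the coefficients $\partial_{\bar u}\kappa\,\tilde a^m_{ij}$ and the boundary operator $\mathcal B$ for the $v_{m+1}$-problem satisfy the hypotheses \ref{h1}--\ref{h4} uniformly in $m$, with the smallness parameter $\delta$ in \ref{h3} being $O(\epsilon_0)$. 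In particular the linear energy estimate at order $s=1$ (that is, the first-order estimate \eqref{first order estimate}, followed by the Step 2 argument that allows one to go from the extended half-space back to the dihedral domain) applies and yields
\begin{align*}
\|v_{m+1}\|_{\mathcal H^1_\eta(\Omega_T)}^2 &+ \sup_{0\le t\le T} e^{-2\eta t}\|v_{m+1}(t,\cdot)\|_{H^1(\Omega)}^2 + \sum_{|\alpha|\le 1}\|e^{-\eta t}\mathrm D^\alpha v_{m+1}\|_{L^2(\omega_T^\ell)}^2 \\
&\lesssim \frac{1}{\eta}\|e^{-\eta t}f_m\|_{L^2(\Omega_T)}^2 + \|e^{-\eta t}g_m\|_{L^2(\omega_T^\ell)}^2,
\end{align*}
where $f_m = F_m-F_{m-1}-\partial_{\bar u}\kappa(\tilde a^m_{ij}-\tilde a^{m-1}_{ij})\partial_{ij}(\tilde u_m+\psi)$ and $g_m=\mathcal B v_m-(G_m-G_{m-1})$.

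Next I would treat the interior source. Since $F$ and the $\tilde a_{ij}$ are smooth functions of $u$ and $\mathrm D u$, the fundamental theorem of calculus gives
\[
|f_m| \le C_\star\bigl(|v_m|+|\mathrm D v_m|\bigr),
\]
where $C_\star$ depends on $L^\infty$ norms of $\tilde u_m,\tilde u_{m-1},\psi,\mathcal W$ together with their derivatives up to second order. By Sobolev embedding and Lemma~\ref{lem-5.3} one has $C_\star\le C\epsilon_0$ provided $s_0+1$ exceeds the embedding threshold. Using Lemma~\ref{lem:4.4} to convert $L^2$--norms into the $\mathcal H^0_\eta$--norm one concludes $\|e^{-\eta t}f_m\|_{L^2(\Omega_T)}^2 \le C\epsilon_0^2\|v_m\|_{1,\eta,T}^2$.

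The main obstacle is the boundary source, because $\mathcal B v_m$ is not by itself small. The key observation is that $\mathcal B$ is precisely the linearization of $G$ at the background state $u_b$, and $G(u_b,\mathrm D u_b)=0$; hence a second-order Taylor expansion of $G$ along the path connecting $u_b+\psi+\tilde u_{m-1}-u_b$ to $u_b+\psi+\tilde u_m-u_b$ yields
\[
G_m-G_{m-1}=\mathcal B v_m + R_m,\qquad |R_m|\le C\epsilon_0\bigl(|v_m|+|\mathrm D v_m|\bigr)\quad\text{on }\omega_T^\ell,
\]
where the factor $\epsilon_0$ comes from the smallness of $\tilde u_m+\psi-u_b$ and $\tilde u_{m-1}+\psi-u_b$ in $L^\infty(\omega_T^\ell)$ (again using Lemma~\ref{lem-5.3} and Sobolev embedding in the tangential variables). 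Consequently $g_m=-R_m$ satisfies
\[
\|e^{-\eta t}g_m\|_{L^2(\omega_T^\ell)}^2 \le C\epsilon_0^2\,\|v_m|_{y_1=0}\|_{1,\eta,T}^2.
\]

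Substituting these two source estimates into the energy inequality and choosing first $\eta_{**}$ so large that $C/\eta_{**}\le 1/2$ and then $\epsilon_0$ (hence $\epsilon$) so small that $C\epsilon_0^2 \le \sigma_0<1$, together with $T_{**}$ small enough to absorb the $e^{2\eta_{**} T_{**}}$ factors arising from passing between the $\mathcal H^1_\eta$-- and $H^1_\eta$--norms, gives exactly \eqref{low norm contraction}. The contraction rate $\sigma_0<1$ therefore comes from two distinct small parameters: a factor $1/\eta_{**}$ for the interior integral and a factor $\epsilon_0^2$ for the boundary integral, the latter being the delicate one since it requires the explicit identification $G_m-G_{m-1}=\mathcal B v_m+R_m$ with a genuinely quadratic remainder.
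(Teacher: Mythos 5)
Your proposal is correct and follows essentially the same route as the paper: apply the first-order linear energy estimate to the problem for $v_{m+1}$, bound the interior source by the fundamental theorem of calculus, and identify $\mathcal{B}v_m-(G_m-G_{m-1})$ as the sum of a coefficient-difference term (small because $\tilde u_{m-1}$ stays $\epsilon_0$-close to $u_b$ by Lemma \ref{lem-5.3}) and a genuinely quadratic remainder, so that the contraction factor comes from $1/\eta_{**}$ in the interior and $\epsilon_0^2$ on the boundary. The only cosmetic deviation is that you claim an extra $\epsilon_0$ prefactor on the coefficient-difference part of the interior source, which the paper does not need (and does not assert) since that term is already absorbed by the $1/\eta$ weight.
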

\begin{proof}

In order to apply theorem \ref{well-posedness and energy estimate of the LP}, we need to estimate the source terms.
In fact, we have
\begin{align}
    F_m-F_{m-1}&=\partial_{\bar{u}}\kappa(\int^1_0F(u,\md u)|_{u=\tilde{u}_{m-1}+\theta v_m+\psi}\mathrm{d}\theta)\cdot v_m\nonumber\\
    &\quad+\sum_{i=0}^3\partial_{\bar{u}}\kappa(\int^1_0\partial_{u_i}F(u,\md u)|_{u=\tilde{u}_{m-1}+\theta v_m+\psi}\mathrm{d}\theta)\cdot\partial_iv_m.
\end{align}
Hence we deduce that
\begin{align}
   \| F_m-F_{m-1}\|_{0,\eta,T}\lesssim \epsilon_0\|v_{m}\|_{1,\eta,T}.
\end{align}
Similarly, one has
\begin{align}
    \|\partial_{\bar{u}}\kappa(\tilde{a}_{ij}^m-\tilde{a}_{ij}^{m-1})\partial_{ij}(\tilde{u}_{m}+\psi)\|_{0,\eta,T}\lesssim \|v_m\|_{1,\eta,T}.
\end{align}
For the boundary term, we have
\begin{align}
    G_m-G_{m-1}&=\partial_{u}G|_{\tilde{u}=\tilde{u}_{m-1}}v_m+\sum_{i=0}^3\partial_{u_{y_i}}G|_{\tilde{u}=\tilde{u}_{m-1}}\partial_{y_i}v_m\nonumber\\
    &\quad+(v_m,\md v_m)^\top \md^2G|_{\tilde{u}=\tilde{u}_{m-1}+\theta v_m}(v_m,\md v_m),\label{eq4.42}
\end{align}
where $\theta\in (0,1)$ and $\md^2G$ is the Hessian matrix of $G$ with respect to $(u,\md u)$.
Hence one deduces that 
\begin{align}
    &\mathcal{B}v_m-(G_m-G_{m-1})\nonumber\\
    &\quad=(\partial_{u}G|_{\tilde{u}=\tilde{u}_b}-\partial_u G|_{\tilde{u}=\tilde{u}_{m-1}})v_m+\sum_{i=0}^3(\partial_{u_{y_i}}G|_{\tilde{u}=\tilde{u}_b}-\partial_{u_{y_i}}G|_{\tilde{u}=\tilde{u}_{m-1}})\partial_{y_i}v_m\nonumber\\
    &\qquad+(v_m,\md v_m)^\top \md^2G|_{\tilde{u}=\tilde{u}_{m-1}+\theta v_m}(v_m,\md v_m).\label{eq4.43}
    \end{align}
    By Taylor theorem, it is clear that
    \begin{align}
    &(\partial_{u}G|_{\tilde{u}=\tilde{u}_b}-\partial_u G|_{\tilde{u}=\tilde{u}_{m-1}})v_m\nonumber\\
    &\quad=-\left(\partial^2_{u}G|_{\tilde{u}=\tilde{u}_b}(\tilde{u}_{m-1}-\tilde{u}_b)+\sum_{i=0}^3(\partial_{uu_{y_i}}G)|_{\tilde{u}=\tilde{u}_b}\partial_{y_i}(\tilde{u}_{m-1}-\tilde{u}_b)\right)v_m\nonumber\\
    &\quad\qquad- \left(\mathbf{X}_{m-1}^\top(\md^2\partial_{u}G)|_{\tilde{u}=\tilde{u}_b+\theta_1 v_m}\mathbf{X}_{m-1}\right) v_m.
\end{align}
where $\theta_1\in(0,1)$, $\mathbf{X}_{m-1}= (\tilde{u}_{m-1}-\tilde{u}_b,\md(\tilde{u}_{m-1}-\tilde{u}_b))$, and $\md^2\partial_{u}G$ is the Hessian matrix of $\partial_{u}G$ with respect to $(u,\md u)$.

Hence by lemma \ref{lem-5.3}, one has
\begin{align}
    &\|e^{-\eta t}(\partial_{u}G|_{\tilde{u}=\tilde{u}_b}-\partial_u G|_{\tilde{u}=\tilde{u}_{m-1}})v_m\|_{L^2(\omega^\ell_T)}\lesssim \epsilon_0 \|e^{-\eta t}v_m\|_{L^2(\omega^\ell_T)}.\label{eq4.45}
\end{align}
Similarly one deduces that 
\begin{align}
    \|e^{-\eta t}(\partial_{u_{y_i}}G|_{\tilde{u}=\tilde{u}_b}-\partial_{u_{y_i}}G|_{\tilde{u}=\tilde{u}_{m-1}})\partial_{y_i}v_m\|_{L^2(\omega^\ell_T)}\lesssim \epsilon_0 \|e^{-\eta t}v_m\|_{H^1(\omega^\ell_T)},\quad 0\le i\le 3.\label{eq4.46}
\end{align}
It is easy to see that
\begin{align}
    \|e^{-\eta t}(v_m,\md v_m)^\top \md^2G|_{\tilde{u}=\tilde{u}_{m-1}+\theta v_m}(v_m,\md v_m)\|_{L^2(\omega^\ell_T)}\lesssim \epsilon_0 \|e^{-\eta t}v_m\|_{H^1(\omega^\ell_T)}.\label{eq4.47}
\end{align}
By \eqref{eq4.43} and \eqref{eq4.45}-\eqref{eq4.47}, we have
\begin{align}
\|e^{-\eta t}\mathcal{B}v_{m+1}\|_{L^{2}(\omega^\ell_T)}&\lesssim \epsilon_0\|e^{-\eta t}v_m\|_{H^{1}(\omega_T^\ell)}.\label{eq4.48}
\end{align}
Then by 
 theorem \ref{well-posedness and energy estimate of the LP}, one has 
\begin{align}
    &\| v_{m+1}\|^2_{1,\eta, T}+\sum_{|\alpha|\le 1}\sup_{0\le t\le T}\|\md^\alpha v_{m+1}(t,\cdot)\|^2_{L^2(\Omega)}+\|v_{m+1}|_{y_1=0}\|^2_{1,\eta,T}\nonumber\\
    &\quad\le C\left(\frac{1}{\eta}+\epsilon_0^2\right)(\|v_{m}\|^2_{1,\eta,T}+\|v_m|_{y_1=0}\|^2_{1,\eta,T}).
\end{align}
Above inequality holds for $\eta>\eta_*$ and $T<T_*$. From the proof of lemma \ref{lem-5.3}, we can further require $\epsilon_0$ small such that $C\epsilon_0<\frac12$. Then one selects $\eta_{**}>\eta_*\ge 1$ such that $\frac{C}{\eta_{**}}\le \frac12\epsilon_0$, then for properly small $T_{**}$ ($T_{**}<T_{*}$), we have
\begin{align}
    &\| v_{m+1}\|^2_{1,\eta_{**}, T_{**}}+\sum_{|\alpha|\le 1}\sup_{0\le t\le T_{**}}\|\md^\alpha v_{m+1}(t,\cdot)\|^2_{L^2(\Omega)}+\|e^{-\eta t} v_{m+1}|_{y_1=0}\|^2_{1,\eta_{**},T_{**}}\nonumber\\
    &\qquad\le \epsilon_0(\|v_m\|^2_{1,\eta_{**}, T_{**}}+\sum_{|\alpha|\le 1}\sup_{0\le t\le T_{**}}\|\md^\alpha v_{m}(t,\cdot)\|^2_{L^2(\Omega)}+\|v_m|_{y_1=0}\|^2_{1,\eta_{**},T_{**}}).\label{eq:5-31}
\end{align}
Since $\epsilon_0<1$, we finish the proof of this lemma by letting $
\sigma_0=\epsilon_0$.
\end{proof}

\textbf{Proof of theorem} \ref{wellposedness of the NLP}.
Armed with lemma \ref{lem-5.3} and lemma \ref{lem-5.4}, we are able to prove theorem \ref{wellposedness of the NLP}.
In fact, lemma \ref{lem-5.4} implies that $\{\tilde{u}_m\}_{m=1}^\infty$ is a Cauchy sequence in the norm of low regularity. Hence it converges strongly such that $\tilde{u}_m$ converges to some function  $\tilde{u}$, i.e.,
\begin{align}
&(\|\tilde{u}_m-\tilde{u}\|_{1,\eta_{**},T_{**}}+\sum_{|\alpha|\le 1}\sup_{0\le t\le T_{**}}\|\md^\alpha (\tilde{u}_m-\tilde{u})(t,\cdot)\|^2_{L^2(\Omega)}\nonumber\\
&\qquad+\|(\tilde{u}_m-\tilde{u})|_{y_1=0}\|_{1,\eta_{**},T_{**}})\longrightarrow 0\mbox{\ as } m \mbox{\  goes\ to\ infinity}.\label{4.45}
\end{align}

Limit \eqref{4.45} also means the coefficients in the equation and boundary conditions in \eqref{eq:5.1}, $\tilde{a}^{m}_{ij}$, $F_m$, $G_m$ and $\mathcal{B}\tilde{u}_m$,  converge to the corresponding quantities with $\tilde{u}_{m}$ being replaced by $\tilde{u}$.

On the other hand, it follows from Lemma \ref{lem-5.3} that $\tilde{u}_m$ converges to $\tilde{u}$ weakly in the norm of high regularity such that $\tilde{u}$ satisfies estimate \eqref{high norm boundedness}. 

Hence, by passing the limit in \eqref{eq:5.1}, it is easy to see that $\tilde{u}+\psi$ is the the smooth solution of the non-linear problem \eqref{NLP} with estimate \eqref{high norm boundedness}. By \eqref{high norm boundedness} and the assumption of theorem \ref{wellposedness of the NLP} one has
\begin{align}
    \|\tilde{u}+\psi-u_b\|_{s,\eta_{**},T_{**}}\le \|\tilde{u}\|_{s,\eta_{**},T_{**}}+\|\psi-u_b\|_{s,\eta_{**},T_{**}}\le C\epsilon_0.
\end{align}

This completes the proof of theorem \ref{wellposedness of the NLP}.

\section{Appendix}
\subsection{Interior coefficients}
By direct computation, we can determine other coefficients.
\begin{align}
\tilde{a}_{00}&=(\partial_{y_1}u)^2.\label{A.1}\\
\tilde{a}_{01}&=\tilde{a}_{10}=\partial_{y_1}u(-\partial_{y_0}u+\partial_{x_1}\Phi(\partial_{x_1}\mcw\partial_{y_3}u-\partial_{x_1}p\partial_{y_2}u+1))\nonumber\\
 &\quad+\partial_{y_1}u(\partial_{x_2}\Phi(\partial_{x_3}p\partial_{x_2}\mcw\partial_{y_2}u+\partial_{x_2}\mcw\partial_{y_2}u-\partial_{y_2}u)
-\partial_{x_3}\Phi(\partial_{x_3}p\partial_{y_2}u+\partial_{y_3}u)).\\
\tilde{a}_{02}&=\tilde{a}_{20}=(\partial_{y_1}u)^2(\partial_{x_1}\Phi\partial_{x_1}p+\partial_{x_2}\Phi(\partial_{x_2}p+1)+\partial_{x_3}\Phi\partial_{x_3}p).\\
\tilde{a}_{12}&=\tilde{a}_{21}=-\partial_{y_0}u(\partial_{x_1}\Phi\partial_{x_1}p\partial_{y_1}u+\partial_{x_2}\Phi(\partial_{x_2}p+1)\partial_{y_1}u)\nonumber\\
&\quad+\partial_{y_1}u(\partial_{x_1}\mcw\partial_{y_3}u-\partial_{x_1}p\partial_{y_2}u +1)\nonumber\\
&\qquad\qquad\times (\partial_{x_1}p(-c^2+|\partial_{x_1}\Phi|^2)+\partial_{x_1}\Phi\partial_{x_2}\Phi(\partial_{x_2}p+1)+\partial_{x_1}\Phi\partial_{x_3}\Phi\partial_{x_3}p).\\
\tilde{a}_{22}&=\partial_{x_1}p\partial_{y_1}u((-c^2+|\partial_{x_1}\Phi|^2)\partial_{x_1}p\partial_{y_1}u+\partial_{x_1}\Phi\partial_{x_2}\Phi(\partial_{x_2}p+1)\partial_{y_1}u)\nonumber\\
&+(\partial_{y_1}u)^2(\partial_{x_2}p+1)(\partial_{x_2}\Phi\partial_{x_1}\Phi\partial_{x_1}p+(-c^2+|\partial_{x_2}\Phi|^2)(\partial_{x_2}p+1)+\partial_{x_2}\Phi\partial_{x_3}\Phi\partial_{x_3}p)\nonumber\\
&+\partial_{x_3}p(\partial_{y_1}u)^2(\partial_{x_3}\Phi\partial_{x_1}\Phi\partial_{x_1}p+\partial_{x_3}\Phi\partial_{x_2}\Phi(\partial_{x_2}p+1)+(-c^2+|\partial_{x_3}\Phi|^2)\partial_{x_3}p).\\
\tilde{a}_{11}&=-\partial_{y_0}u(-\partial_{y_0}u+\partial_{x_1}\Phi(\partial_{x_1}\mcw\partial_{y_3}u-\partial_{x_1}p\partial_{y_2}u+1)\nonumber\\
&+\partial_{x_2}\Phi((\partial_{x_3}p\partial_{x_2}\mcw-1)\partial_{y_2}u)+\partial_{x_2}\mcw\partial_{y_3}u)\nonumber\\
&+(\partial_{x_1}\mcw\partial_{y_3}u-\partial_{x_1}p\partial_{y_2}u+1)\nonumber\\
&\qquad\qquad\qquad\qquad\times(-\partial_{x_1}\Phi\partial_{y_0}u+(-c^2+|\partial_{x_1}\Phi|^2)(\partial_{x_1}\mcw\partial_{y_3}u-\partial_{x_1}p\partial_{y_2}u+1))\nonumber\\
&+\partial_{x_3}\Phi\partial_{y_0}u(\partial_{x_3}p\partial_{y_2}u+\partial_{y_3}u)+\partial_{x_1}\Phi\partial_{x_2}\Phi(\partial_{x_1}\mcw\partial_{y_3}u-\partial_{x_1}p\partial_{y_2}u+1)\nonumber\\
&\qquad\qquad\qquad\qquad\times((\partial_{x_3}p\partial_{x_2}\mcw-1)\partial_{y_2}u+\partial_{x_2}\mcw\partial_{y_3}u)\nonumber\\
&-(\partial_{x_1}\mcw\partial_{y_3}u-\partial_{x_1}p\partial_{y_2}u+1)\partial_{x_1}\Phi\partial_{x_3}\Phi(\partial_{x_3}p\partial_{y_2}u+\partial_{y_3}u)\nonumber\\
&+((\partial_{x_3}p\partial_{x_2}\mcw-1)\partial_{y_2}u+\partial_{x_2}\mcw\partial_{y_3}u)\nonumber\\
&\qquad\qquad\qquad\qquad\times(-\partial_{x_2}\Phi\partial_{y_0}u+\partial_{x_1}\Phi\partial_{x_2}\Phi(\partial_{x_1}\mcw\partial_{y_3}u-\partial_{x_1}p\partial_{y_2}u+1))\nonumber\\
&+(-c^2+|\partial_{x_2}\Phi|^2)((\partial_{x_3}p\partial_{x_2}\mcw-1)\partial_{y_2}u+1)((\partial_{x_3}p\partial_{x_2}\mcw-1)\partial_{y_2}u+\partial_{x_2}\mcw\partial_{y_3}u)\nonumber\\
&-((\partial_{x_3}p\partial_{x_2}\mcw-1)\partial_{y_2}u+\partial_{x_2}\mcw\partial_{y_3}u)\partial_{x_2}\Phi\partial_{x_3}\Phi(\partial_{x_3}p\partial_{y_2}u+\partial_{y_3}u)\nonumber\\
&-(\partial_{x_3}p\partial_{y_2}u+\partial_{y_3}u)(-\partial_{x_3}\Phi\partial_{y_0}u+\partial_{x_3}\Phi\partial_{x_1}\Phi(\partial_{x_1}\mcw\partial_{y_3}u-\partial_{x_1}p\partial_{y_2}u+1))\nonumber\\
&-(\partial_{x_3}p\partial_{y_2}u+\partial_{y_3}u)\partial_{x_3}\Phi\partial_{x_2}\Phi((\partial_{x_3}p\partial_{x_2}\mcw-1)\partial_{y_2}u+\partial_{x_2}\mcw\partial_{y_3}u)\nonumber\\
&+(\partial_{x_3}p\partial_{y_2}u+\partial_{y_3}u)^2(-c^2+|\partial_{x_3}\Phi|^2).\\
\tilde{a}_{33}&=\partial_{x_1}p(\partial_{y_1}u)^2((-c^2+|\partial_{x_1}\Phi|^2)\partial_{x_1}p+\partial_{x_1}\Phi\partial_{x_2}\Phi(\partial_{x_2}p+1)+\partial_{x_1}\Phi\partial_{x_3}\Phi\partial_{x_3}p)\nonumber\\
&+(\partial_{x_2}p+1)(\partial_{y_1}u)^2(\partial_{x_2}\Phi\partial_{x_1}\Phi\partial_{x_1}p+(-c^2+|\partial_{x_2}\Phi|^2)(\partial_{x_2}p+1)+\partial_{x_2}\Phi\partial_{x_3}\Phi\partial_{x_3}p)\nonumber\\
&+\partial_{x_3}p(\partial_{y_1}u)^2(\partial_{x_3}\Phi\partial_{x_1}\Phi\partial_{x_1}p+\partial_{x_3}\Phi\partial_{x_2}\Phi(\partial_{x_2}p+1)+(-c^2+|\partial_{x_3}\Phi|^2)\partial_{x_3}p).\label{A.7}
\end{align}

\subsection{Second order derivatives of $\kappa$}
The second order derivatives of $\kappa$ can be computed via chain rule on the basis of the first order derivatives.
\begin{align}
    \partial_{\bar{u}\bar{u}}\kappa&=\partial_{x_1}(\partial_{\bar{u}}\kappa)\frac{\partial x_1}{\partial u}\partial_{\bar{u}}\kappa+\partial_{x_2}(\partial_{\bar{u}}\kappa)\frac{\partial x_2}{\partial u}\partial_{\bar{u}}\kappa\\
    \kappa_{\bar{u}y_2}&=\partial_{x_1}(\partial_{\bar{u}}\kappa)\frac{\partial x_1}{\partial y_2}+\partial_{x_2}(\partial_{\bar{u}}\kappa)\frac{\partial x_2}{\partial y_2}\\
    \kappa_{\bar{u}y_3}&=\frac{\partial_{x_2}N(1+y_3(\partial_{x_1}N)+\partial_{x_2}N)+(\partial_{x_1}N+\partial_{x_2}N)(1+y_3\partial_{x_2}N)}{(1+y_3(\partial_{x_1}N+\partial_{x_2}N))^2}\nonumber\\
    &\quad+\partial_{x_1}(\partial_{\bar{u}}\kappa)\frac{\partial x_1}{\partial u}+\partial_{x_2}(\partial_{\bar{u}}\kappa)\frac{\partial x_2}{\partial u}\\
    \partial_{y_2y_2}\kappa&=\partial_{x_1}(\partial_{y_2}\kappa)\frac{\partial x_1}{\partial y_2}+\partial_{x_2}(\partial_{y_2}\kappa)\frac{\partial x_2}{\partial y_2}\\
    \partial_{y_2y_3}\kappa&=\partial_{x_1}\partial_{y_3}\kappa\frac{\partial x_1}{\partial y_2}+\partial_{x_2}\partial_{y_3}\kappa\frac{\partial x_2}{\partial y_2}\\
    \partial_{y_3y_3}\kappa&=\partial_{x_1}\partial_{y_3}\kappa\frac{\partial x_1}{\partial y_3}+\partial_{x_2}\partial_{y_3}\kappa\frac{\partial x_2}{\partial y_3}+\frac{N(\partial_{x_1}N+\partial_{x_2}N)}{(1+y_3(\partial_{x_1}N+\partial_{x_2}N))^2}
\end{align}
where 
\begin{align}
    \frac{\partial x_1}{\partial u}&=1\\
    \frac{\partial x_2}{\partial u}&=-\frac{y_3\partial_{x_1}N}{1+y_3\partial_{x_2}N}
    \end{align}
By calculating the inverse of $\mathbf{J}$, one can derive $\frac{\partial x_1}{\partial y_i}$ $(i=2,3)$ and $\frac{\partial x_2}{\partial y_i}$ $(i=2,3)$. For example, one has
\begin{align}
    \frac{\partial x_2}{\partial y_2}&=\frac{1}{1+y_3\partial_{x_2}N},\\
    \frac{\partial x_2}{\partial y_3}&=-\frac{N}{1+y_3\partial_{x_2}N}.
\end{align}
And $\partial_{\bar{u}}\kappa$, $\partial_{y_2}\kappa$ and $\partial_{y_3}\kappa$ are given in \eqref{4-2}.

\medskip

\noindent
{\bf Acknowledgements}.
The research of Beixiang Fang was supported in part by NSFC Grant Nos. 11971308 and 11631008. 
The research of Feimin Huang was supported in part by NSFC Grant No. 11688101.
The research of Wei Xiang was supported in part by the Research Grants Council of the HKSAR, China
(Project No.CityU 11303518, Project CityU 11304820 and Project CityU 11300021). The research of Feng Xiao was supported by the National Center for Mathematics and Interdisciplinary Sciences, CAS.

\end{document}